\DeclareSymbolFont{cyrletters}{OT2}{wncyr}{m}{n}
\DeclareMathSymbol{\Sha}{\mathalpha}{cyrletters}{"58}
\title[Parameters of solvable automorphic forms]{Parameters of solvable automorphic forms}
\date{\today}
\author{Peter Vang Uttenthal \orcidlink{0009-0001-0878-8213}}
\email{petervang@math.au.dk}
\address{Department of Mathematics, Aarhus University, Ny Munkegade 118, Building 1530-421, DK-8000
Aarhus C, Denmark}
\newcommand{\Gal}{\operatorname{Gal}}
\newcommand{\Q}{\mathbb{Q}}
\newcommand{\Z}{\mathbb{Z}}
\newcommand{\F}{\mathbb{F}}
\newcommand{\Ad}{\operatorname{Ad}^0(\overline{\rho})}
\newtheorem{theorem}{Theorem}
\newtheorem{lemma}[theorem]{Lemma}
\newtheorem{definition}[theorem]{Definition}
\newtheorem{proposition}[theorem]{Proposition}
\newtheorem{corollary}[theorem]{Corollary}
\newtheorem{remark}[theorem]{Remark}
\begin{document}

\begin{abstract} 
In a letter from Tate to Serre dated March 26, 1974, Tate suggested a classification of weight one modular forms of prime level in terms of their associated odd Artin representations. This paper carries out an analogous classification of Maass wave forms of prime power level in terms of complex even representations. The parameters are identified with techniques from class field theory and Galois representations. The classification reveals that there exist distinct Maass cusp forms of tetrahedral type on $\Gamma_1(\ell)$ that remain inequivalent modulo $3$ for $\ell = 7687, 16363$ and $20887$, and that these $\ell$  are the three smallest such primes. 
\end{abstract}

\maketitle

\tableofcontents

\section{Introduction}
As a reference to F. Klein's classification of the finite subgroups of the rotation group of a sphere, Langlands' theory of base change for automorphic forms on $\operatorname{GL}(2)$ gave rise to the notion of an automorphic form of solvable type \cite{langlands}: An automorphic form on $\operatorname{GL}(2)$ over $\Q$ is said to be of solvable type (or just solvable) if it is attached to a complex Galois representation whose projective image is a solvable group.
For an integer $n\geqslant 0$, let $\Gamma_1(n)$ be the subgroup of $\operatorname{SL}(2,\Z)$ of matrices whose diagonal entries are congruent to $1$ modulo $n$ and whose lower left entry is congruent to $0$ modulo $n$. 
The starting point of the present paper is the question:
Do there exist distinct solvable Maass forms $\pi$ and  $\pi'$ on $\Gamma_1(\ell)$ for some prime $\ell$ such that $\pi$ and $\pi'$ remain distinct modulo $3$?
The notion of reduction modulo 3 will be clarified below.  
An automorphic form on $\Gamma_1(n)$ $(n\in \Z_{\geqslant 0})$ is said to be of \emph{level} $\Gamma_1(n)$. 
In this paper, the Maass forms considered will always be algebraic Maass forms of eigenvalue $\lambda = 1/4$.
If $\pi=\pi(\rho)$ is an algebraic Maass form corresponding to a complex, 2-dimensional represention of the absolute Galois group over $\Q$, then $\rho$ is ramified only at $\ell$ if and only if $\pi$ has level $\Gamma_1(\ell^k)$ for some $k\in \Z_{\geqslant 1}$, and 
in the affirmative, $\ell^k$ is the conductor of the Galois representation $\rho$. However, we will show that the projectivization $\tilde{\rho}$
must have conductor $\ell$ in the tetrahedral case
and conductor $\ell$ or $\ell^2$ in the octahedral case, respectively.

To answer the question, it is helpful to ask more generally: 
For which primes $\ell$ is there an integer $k\geqslant 1$ such that
there exists a solvable Maass form of level $\Gamma_1(\ell^k)$?  
In this paper, we answer both of these questions by giving an explicit classification of all solvable Maass forms on $\Gamma_1(\ell^k)$ for $k\in \{1,2\}$ in terms of their attached Galois representations. 
In addition, we give formulas for the number of distinct Maass forms of tetrahedral type on $\Gamma_1(\ell)$ and the number of Maass forms of octahedral type on $\Gamma_1(\ell^{k})$ for $k\in \{1,2\}$
in terms of dimensions of certain global Galois cohomology groups.

In a letter from Tate to Serre, dated March 26th, 1974, Tate suggested a classification of modular forms of weight one and level $\Gamma_0(\ell)$ where $\ell$ is a prime, by classifying
the corresponding complex, 2-dimensional \emph{odd} Galois representations of prime conductor $\ell$ \cite[p. 244]{Serre}.  
This paper carries out an analogous explicit classification of
Maass wave forms of level $\Gamma_1(\ell^k)$
of solvable type for $k \in \{1,2\}$ by 
classifying their associated complex \emph{even} Galois representations of prime power conductor.

Let $G_\Q =  \Gal(\overline{\Q}/\Q)$ be the absolute Galois group over $\Q$. 
If $\tilde{\rho}: G_\Q \to \operatorname{PGL}(2,\mathbb{C})$ is a projective representation,
a $lifting$ of $\tilde{\rho}$ is a representation $\rho: G_\Q \to \operatorname{GL}(2,\mathbb{C})$ such that the diagram
\[
\begin{tikzcd}
    \Gal(\overline{\Q}/\Q) \arrow[r, "\rho"] \arrow[dr, "\tilde{\rho}", swap] & \operatorname{GL}(2,\mathbb{C}) \arrow[d] \\
    &  \operatorname{PGL}(2,\mathbb{C})
\end{tikzcd}
\]
commutes. Throughout this paper, representations of $G_\Q$ will always implicitly be assumed to be continuous in the sense of having an open kernel.   
The \emph{conductor} of a projective representation $\tilde{\rho}$ of $G_\Q$ is the integer
$$
N = \prod_p p^{m(p)}
$$
where the product is over primes $p$ and where $m(p)$ is the least integer greater than or equal to zero such that the restriction $\tilde{\rho}|_{\Gal(\overline{\Q}_p/\Q_p)}$ has a lifting with conductor $p^{m(p)}$. By a theorem of Tate, if $\tilde{\rho}$ has conductor $N$, it has a lifting $\rho$ with conductor $N$ (and every lifting has conductor a multiple of $N$) \cite[Theorem 5]{Serre}. In this paper, 
whenever $\tilde{\rho}$ is a projective representation of $G_\Q$, $\rho$ will denote a lifting of $\tilde{\rho}$ with the same conductor as $\tilde{\rho}$.

Let $\tilde{\rho}: G_\Q \to \operatorname{PGL}(2,\mathbb{C})$ be a projective representation. For any place $p$ of $\Q$, let $D_p = \Gal(\overline{\Q}_p/\Q_p)$ and let $I_p$ be the inertia subgroup of $D_p$. 
Since $\tilde{\rho}$ is continuous, $\tilde{\rho}(G_\Q)$, is a finite subgroup of $\operatorname{PGL}(2,\mathbb{C})$. 
If $\tilde{\rho}$ is unramified at $p$, then $m(p)=0.$
If $\tilde{\rho}$ is tamely ramified at $p$, then $m(p)=1$ or $m(p)=2$. 
In the sequel, the following lemma by Serre will be needed:
Let $e_p$ be the cardinality of $\tilde{\rho}(I_p)$. If $\tilde{\rho}$ is tamely ramified at $p$ and 
if $e_p \geqslant 3$, then the conductor of $\tilde{\rho}$ is exactly divisible by $p$ if and only if
$e_p$ divides $p-1$ \cite[p. 248]{Serre}.
Note that if $K$ is the number field fixed by the kernel of $\tilde{\rho}$, then 
$\tilde{\rho}(I_p)$ is the inertia group and $\tilde{\rho}(D_p)$ is the decomposition group at $p$
in $\Gal(K/\Q)$, respectively. In particular 
$e_p$ is the ramification index of $p$ in $K/\Q$. 

In Langlands' theory of base change, complex 2-dimensional Galois representations $\rho$ of solvable type are shown to correspond to certain automorphic forms on $\operatorname{GL}(2)$ over $\Q$; we will now define the notion of being of solvable type precisely. 
 By a theorem of Klein, any finite subgroup of $\operatorname{PGL}(2,\mathbb{C})$ is isomorphic to one of the following groups \cite[Proposition 4.9.1]{Bump}:
\begin{enumerate}
    \item The cyclic group of order $n$.
    \item The dihedral group $D_{2n}$ for some $n \in \Z_{\geqslant 2}$.
    \item The alternating group $A_4$ of rotations of the tetrahedron. 
    \item The symmetric group $S_4$ of rotations of the octahedron. 
    \item The alternating group $A_5$ of rotations of the icosahedron.
\end{enumerate}
Consequently, complex 2-dimensional Galois representations are classified according to the isomorphism type of their projective image in $\operatorname{PGL}(2,\mathbb{C})$, see e.g. \cite{langlands}. 
The Langlands-Tunnell theorem applies to \emph{solvable} groups, and all groups in the list except $A_5$ are solvable. The cyclic and dihedral cases have been treated in classical work going back at least to Hecke. In this paper, we focus on representations whose projective image is isomorphic to $A_4$ or $S_4$; such representations present a sweet spot where algebraic techniques can be applied to prove new facts about Maass wave forms, and this is the purpose of this paper. 

A complex 2-dimensional  Galois representation 
$\rho$ is said to be attached to a Maass form $\pi$ if the automorphic $L$-function $L(\pi,s)$ of the Maass form is equal  to the Artin $L$-function $L(\rho,s)$ of $\rho$. In this  case,
for all finite places $v$ at which  $\rho$ is unramified, 
the trace of $\rho(\sigma_v)$ (where $\sigma_v$ is a Frobenius automorphism   at $v$) is equal  to the  $v$th Fourier coefficient of $\pi$. A Maass form $\pi$ is of tetrahedral or octahedral type if it is attached to a complex 2-dimensional Galois representation $\rho$ whose projective image $\tilde{\rho}(G_\Q)$ is isomorphic to $A_4$ or $S_4$, respectively.

Let $\Q$ be the field of rational numbers. For any positive integer $n$, let $\zeta_n$ be a primitive $n$th root of unity, and let $\Q(\zeta_n)$ be the cyclotomic extension of $\Q$ obtained by adjoining $\zeta_n$.

\begin{theorem}[Classificaton of Maass forms of tetrahedral type on $\Gamma_1(\ell)$] \label{1}
Let $I$ be the set of primes $\ell$ such that 
\begin{enumerate}
    \item $\ell \equiv 1 \bmod 3$ 
    \item The class number $h_L$ of the cubic subfield $L$ of $\Q(\zeta_\ell)$ is even.
\end{enumerate}
For each $\ell \in I$, there is a Maass wave form $\pi^{(\ell)}$ on $\Gamma_1(\ell)$
attached to a complex 2-dimensional representation of $G_\Q$ of tetrahedral type and of conductor $\ell$. 
The list $\{ \pi^{(\ell)}: \ell \in I\}$ is exhaustive: Any Maass form of tetrahedral type on $\Gamma_1(\ell)$ for a prime $\ell$ occurs in this list. 
\end{theorem}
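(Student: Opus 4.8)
The plan is to translate the statement into a classification of $A_4$-extensions of $\Q$ ramified only at $\ell$, and then to settle that classification by class field theory over the cyclic cubic subfield. By the Langlands--Tunnell correspondence, a Maass form of tetrahedral type on $\Gamma_1(\ell)$ is the same datum as an even two-dimensional representation $\rho$ of $G_\Q$ with $\tilde\rho(G_\Q)\cong A_4$ whose conductor equals $\ell$; by the conductor facts recorded in the introduction, the projective representation $\tilde\rho$ then has conductor $\ell$ and cuts out a field $K$ with $\Gal(K/\Q)\cong A_4$ that is ramified only at $\ell$. As $A_4$ has the normal Klein four-subgroup $V_4$ with $A_4/V_4\cong\Z/3$, the fixed field of $V_4$ is a cyclic cubic field $L$ with $\Gal(L/\Q)\cong\Z/3$ and $\Gal(K/L)\cong V_4$, on which $\Z/3$ acts by cyclically permuting the three involutions of $V_4$. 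The task is thus to decide exactly when such a tower $\Q\subset L\subset K$, ramified only at $\ell$, exists.

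First I would pin down $L$ and the ramification of $K/L$. The quotient $\Gal(L/\Q)\cong\Z/3$ exhibits $L$ as a cyclic cubic field ramified only at $\ell$ of conductor $\ell$; such a field exists precisely when $\ell\equiv 1\bmod 3$, in which case it is the cubic subfield of $\Q(\zeta_\ell)$, where $\ell$ is totally ramified. This yields condition (1). For $K/L$, observe that $K/\Q$ is ramified only at $\ell$, and since $\Q$ admits no nontrivial unramified extension, the inertia subgroups above $\ell$ generate $A_4$. Because $\ell\not\equiv 0\bmod 3$ the ramification is tame, so inertia is cyclic; the only conjugacy class of cyclic subgroups whose normal closure is all of $A_4$ is that of the $3$-cycles. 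Hence inertia at $\ell$ has order $3$, meets $V_4$ trivially, and $K/L$ is unramified at every finite and infinite place; Serre's lemma (with $e_\ell=3$ dividing $\ell-1$) then confirms that $\tilde\rho$ has conductor exactly $\ell$.

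The heart of the argument is to show that an unramified $V_4$-extension $K/L$ which is Galois over $\Q$ with group $A_4$ exists if and only if $h_L$ is even. By class field theory such extensions correspond to $V_4$-quotients of $\cl(L)$ which, as $\F_2[\Z/3]$-modules, are isomorphic to $\F_4$ with $\Z/3$ acting through $\F_4^\times$; this is exactly the permutation action on the three involutions. Since $x^3-1=(x-1)(x^2+x+1)$ over $\F_2$, one has $\F_2[\Z/3]\cong\F_2\times\F_4$, so the $\F_2[\Z/3]$-module $\cl(L)[2]$ splits into a part on which $\Z/3$ acts trivially and an $\F_4$-isotypic part. The decisive input is Chevalley's ambiguous class number formula for $L/\Q$: because $h_\Q=1$, the unique ramified finite place $\ell$ has $e_\ell=3$, and $-1$ is a norm from $L$ since $[L:\Q]=3$ is odd, the formula gives $\lvert\cl(L)^{\Z/3}\rvert=1$. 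Thus $\Z/3$ fixes no nonzero class, the trivial-action part of $\cl(L)[2]$ vanishes, and $\cl(L)[2]$ is $\F_4$-isotypic, hence of even $\F_2$-dimension. Consequently $h_L$ is even if and only if $\cl(L)[2]\neq 0$, if and only if it contains a copy of $\F_4$, if and only if the required $A_4$-extension exists; this is condition (2). I expect this module-theoretic step---and specifically the vanishing of the ambiguous class group---to be the main obstacle, since it is what upgrades the bare hypothesis that $2\mid h_L$ to the existence of a full $V_4$-extension while simultaneously forcing the correct $A_4$-action.

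Finally I would check evenness and assemble both directions. Because $L\subset\Q(\zeta_\ell)$ is totally real and $K/L$ is unramified at infinity, $K$ is totally real, so complex conjugation $c$ lies in $\ker\tilde\rho$; then $\rho(c)$ is scalar, and $\rho(c)^2=1$ forces $\det\rho(c)=1$, so $\rho$ is even and $\pi=\pi(\rho)$ is a Maass form. For existence, given $\ell\in I$ the construction produces $K$ and hence $\tilde\rho$, which lifts by Tate's theorem to a representation $\rho$ of conductor $\ell$; Langlands--Tunnell then furnishes $\pi^{(\ell)}$ on $\Gamma_1(\ell)$. For exhaustiveness, any tetrahedral Maass form on $\Gamma_1(\ell)$ runs this construction in reverse to such a field $K$, forcing $\ell\equiv 1\bmod 3$ and $2\mid h_L$, that is $\ell\in I$.
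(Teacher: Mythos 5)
Your proposal is correct, and its skeleton --- the Langlands--Tunnell dictionary between tetrahedral Maass forms on $\Gamma_1(\ell)$ and totally real $A_4$-extensions $K/\Q$ ramified only at $\ell$, the identification of $L$ as the cubic subfield of $\Q(\zeta_\ell)$ (forcing $\ell \equiv 1 \bmod 3$), the tame-inertia argument giving $e_\ell = 3$ and $K/L$ unramified, Tate's lifting theorem, and the evenness check via total reality --- matches the paper's (Lemmas \ref{maass} and \ref{exh}). The genuine divergence is in the central step, namely that evenness of $h_L$ actually produces an $A_4$-extension with the correct $\Z/3\Z$-action. The paper proves this with a descent argument used twice: Lemma \ref{A4} shows the 2-part of the class group cannot be cyclic (a cyclic 2-part admits only the trivial $\Z/3\Z$-action, so the corresponding field would descend to an everywhere-unramified extension of $\Q$, contradicting Minkowski), and Theorem \ref{rank} shows by the same descent that the trivial constituent of $\Gal(L_\emptyset^{(2)}/L)$ as an $\F_2[\Z/3\Z]$-module vanishes, so the module is $U_2$-isotypic of even dimension. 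You reach the same structural conclusion in one stroke via Chevalley's ambiguous class number formula: $h_\Q = 1$, a single ramified place with $e_\ell = 3$, and $-1 = N(-1)$ in odd degree give $\cl(L)^{\Z/3\Z} = 1$, hence no trivial constituents, hence $\F_4$-isotypicity. Your route is shorter and yields the stronger fact that the full ambiguous class group is trivial, at the cost of importing Chevalley's formula; the paper's argument is self-contained (Minkowski only) and is reused verbatim to get the evenness of the $p$-rank for every $p \equiv 2 \bmod 3$ (Theorem \ref{rank}) and the exact count $n_\ell^{(A_4)} = 2^{k}-1$ of Proposition \ref{kA4}, which your setup would also deliver with the same Schur-lemma counting. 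One bookkeeping point you pass over: class field theory parameterizes unramified abelian extensions by quotients of $\cl(L)$, whereas your dimension count is run on the torsion subgroup $\cl(L)[2]$; since the $\Z/3\Z$-action has order prime to $2$, the finite 2-part decomposes under the idempotents of $\Z_2[\Z/3\Z]$, invariants and coinvariants have equal order, and $\cl(L)[2]$ and $\cl(L)\otimes\F_2$ have the same constituents --- so the switch is harmless, but it deserves an explicit sentence.
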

For a number field $E$ and a set of places $S$ in $\Q$, let $E_S$ be the maximal extension of $E$ unramified outside the primes in $E$ above $S$. In particular, $E_\emptyset$ is the maximal unramified extension of $E$.
\begin{corollary}
For any $\ell \in I$, let $n_\ell^{(A_4)}$ denote the number of tetrahedral Maass forms on $\Gamma_1(\ell)$. 
If we define 
\[ k_\ell = \frac{1}{2}\dim_{\F_2} H^1(\Gal(L_\emptyset/L), \F_2) \]
then 
\[
n_\ell^{(A_4)} = 2^{k_\ell}-1.
\]
\end{corollary}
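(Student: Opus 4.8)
The plan is to convert the enumeration of Maass forms into an enumeration of $A_4$-extensions refined by a condition on the linear lift, and then to read that off from the $\mathbb{F}_2[C_3]$-module structure of $\cl_L/2\cl_L$. By the Langlands--Tunnell correspondence together with strong multiplicity one, a tetrahedral Maass form on $\Gamma_1(\ell)$ is the same datum as an even two-dimensional representation $\rho$ of $G_\Q$ of conductor exactly $\ell$ with $\tilde\rho(G_\Q)\cong A_4$; Theorem \ref{1} guarantees these exist precisely for $\ell\in I$. Writing $K$ for the field cut out by $\tilde\rho$, the normal subgroup $V_4\trianglelefteq A_4$ with $A_4/V_4\cong C_3$ produces a cyclic cubic subfield $M=K^{V_4}$. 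Since $M/\Q$ is ramified at $\ell$, the image of $I_\ell$ in $A_4/V_4$ is nontrivial, so $\tilde\rho(I_\ell)$ is a cyclic subgroup of $A_4$ meeting $A_4\setminus V_4$, hence of order $3$; thus $e_\ell=3$, the ramification is tame ($\ell\neq 3$), and Serre's lemma gives conductor exactly $\ell$ because $3\mid \ell-1$. Tameness and $e_\ell=3$ also identify $M$ as the unique cyclic cubic field of conductor $\ell$, namely $L$, and force $K/L$ to be unramified. Hence $K\subseteq H_L$ and $\Gal(K/L)\cong V_4$ is a quotient of $\cl_L$ on which $\Gal(L/\Q)\cong C_3$ acts through the nontrivial irreducible $\mathbb{F}_2[C_3]$-representation.

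Next I would pin down the module. Chevalley's ambiguous class number formula, with inputs $h_\Q=1$, a single finite ramified prime with $e_\ell=3$, trivial archimedean ramification, and unit index $[\{\pm1\}:\{\pm1\}\cap N_{L/\Q}L^\times]=1$ (because $-1=N_{L/\Q}(-1)$), yields $|\cl_L^{C_3}|=1$. As $|C_3|$ is invertible modulo $2$, taking $C_3$-invariants is exact, so $(\cl_L/2\cl_L)^{C_3}=0$; by semisimplicity $\cl_L/2\cl_L\cong \mathbb{F}_4^{\,k_\ell}$ as an $\mathbb{F}_2[C_3]$-module, where $\mathbb{F}_4=\mathbb{F}_2[C_3]/(1+\sigma+\sigma^2)$. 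Dualizing and using $\Gal(L_\emptyset/L)^{\mathrm{ab}}\cong\cl_L$ gives $\dim_{\mathbb{F}_2}H^1(\Gal(L_\emptyset/L),\mathbb{F}_2)=\dim_{\mathbb{F}_2}\operatorname{Hom}(\cl_L,\mathbb{F}_2)=2k_\ell$, which is exactly why the factor $\tfrac12$ appears in the definition of $k_\ell$.

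Finally I would count, and here the linear lift imposes a constraint beyond the field $K$. At $\ell$ the conductor-$\ell$ lift must be the tame representation $\rho|_{I_\ell}=\operatorname{diag}(\chi_3,1)$ with one unramified eigenvalue, so the nebentypus $\det\rho$ is the cubic character $\chi_3$ cutting out $L$ (which is even, since $-1$ is a cube modulo $\ell$), and $\rho|_{G_L}$ is everywhere unramified with image $Q_8\subset\operatorname{SL}(2,\mathbb{C})$: an unramified binary-tetrahedral refinement $\tilde K/L$ of $K/L$. Writing $\rho|_{G_L}=\operatorname{Ind}_{G_M}^{G_L}\psi$ for a quadratic $M/L$ and an unramified order-$4$ character $\psi$, the $Q_8$-embedding (Witt/Rédei-type) obstruction is a quadratic condition on the line in $\cl_L/2\cl_L\cong\mathbb{F}_4^{\,k_\ell}$ determined by $K$. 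My expectation is that this obstruction, together with the behaviour of $\rho(c)$ at the real place, endows $\operatorname{Hom}_{C_3}(\cl_L/2\cl_L,\mathbb{F}_4)$ with an $\mathbb{F}_2$-rational structure of $\mathbb{F}_2$-dimension $k_\ell$, and that the admissible forms are in bijection with its nonzero vectors, giving $n_\ell^{(A_4)}=2^{k_\ell}-1$.

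The main obstacle is precisely this last enumeration. The coarse invariant $K/L$ is merely a $C_3$-stable $V_4$-quotient of $\cl_L$, i.e.\ an $\mathbb{F}_4$-line, of which there are $(4^{k_\ell}-1)/3$; the delicate point is that the finer datum of the unramified binary-tetrahedral cover, subject to the quaternionic obstruction in $H^2(\Gal(L_\emptyset/L),\mathbb{F}_2)$ and to the real-place condition, must cut this naive $\mathbb{F}_4$-count down to the nonzero $\mathbb{F}_2$-rational vectors counted by $2^{k_\ell}-1$. Reconciling these two counts—carefully analyzing the $Q_8$-embedding problem under the $C_3$-action, invoking Tate's lifting theorem to realize every admissible datum by an actual representation of conductor exactly $\ell$, and using strong multiplicity one to ensure distinct data give distinct forms—is the part of the argument I expect to require the most care.
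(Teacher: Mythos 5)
Your first two paragraphs track the paper's own proof closely and are essentially sound: Langlands--Tunnell plus strong multiplicity one reduces the count to even tetrahedral representations ramified only at $\ell$; tameness forces $e_\ell=3$, identifies $L$ as the cubic subfield of $\Q(\zeta_\ell)$, and makes $K/L$ unramified (this is the content of Lemma \ref{exh}); and you obtain $\cl_L/2\cl_L\cong U_2^{\oplus k_\ell}$ with no trivial summand from Chevalley's ambiguous class number formula, where the paper instead uses a descent argument (Theorem \ref{rank}) --- a legitimate, arguably cleaner, alternative. The explanation of the factor $\tfrac12$ in $k_\ell$ is the same in both.

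The genuine gap is the final enumeration, which you yourself leave as an ``expectation.'' The paper performs no cutting-down from a naive count to $2^{k_\ell}-1$: its Lemma \ref{counting} asserts that $U_2^{\oplus k}$ has exactly $\operatorname{card}\mathbb{P}(\F_2^{k})=2^{k}-1$ nontrivial irreducible $\F_2[\Z/3\Z]$-submodules, Proposition \ref{kA4} concludes there are that many $A_4$-fields, and Lemma \ref{maass} (Tate's lifting theorem plus Langlands--Tunnell) together with Lemma \ref{exh} matches Maass forms bijectively with these fields. Moreover, the mechanism you hope will close your gap cannot exist: by the very theorem of Tate that the paper invokes, every projective representation of conductor $\ell$ admits a linear lift of conductor exactly $\ell$; such a lift necessarily has determinant a cubic character cutting out $L$, so its restriction to $G_L$ is everywhere unramified with image $Q_8$, i.e.\ the $Q_8$-embedding problem you describe is unconditionally solvable for \emph{every} one of the $A_4$-fields, and evenness at the real place is automatic since $K$ is totally real. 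Hence no field is discarded by any obstruction, quaternionic or archimedean. What your analysis has actually uncovered is a genuine tension rather than a missing obstruction: since $\operatorname{End}_{\F_2[\Z/3\Z]}(U_2)=\F_4$, Schur's lemma gives $\operatorname{Hom}_{\Z/3\Z}(U_2,U_2^{\oplus k})\cong\F_4^{k}$ with two homomorphisms having the same image exactly when they differ by $\F_4^{\times}$-scaling, so the number of unramified $A_4$-extensions is $\operatorname{card}\mathbb{P}(\F_4^{k})=(4^{k}-1)/3$, exactly as you say --- whereas the proof of Lemma \ref{counting} tacitly assumes the endomorphism field is $\F_2$. Your proposal therefore fails at the decisive counting step, and the repair route you suggest is closed; reconciling $(4^{k_\ell}-1)/3$ with $2^{k_\ell}-1$ would require either a different count of the fields or a coarser equivalence on the forms, and neither your argument nor the counting lemma it collides with supplies this.
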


In the field diagram below, we show for a prime $\ell \equiv 1 \bmod 3$
the totally real cubic subfield $L$ of $\Q(\zeta_\ell)$. In this diagram, $\overline{\zeta_\ell}$ denotes the conjugate of $\zeta_\ell$, and the field $\Q \left( \zeta_{\ell} + \overline{\zeta_{\ell}} \right)$ is the subfield of $\Q(\zeta_\ell)$ fixed by complex conjugation. 
\[
\begin{tikzcd}[arrows=dash]
\Q(\zeta_{\ell}) \\
\Q \left( \zeta_{\ell} + \overline{\zeta_{\ell}} \right) \arrow[u] \\ 
L \arrow[u]\\
\Q \arrow[u, swap, "(L:\Q)=3"]
\end{tikzcd}
\]

We prove an analogous theorem for Maass forms of octahedral type.
For any set of places $S$ in $\Q$, a number field $E$, and a prime $p$, we let $E_S^{(p)}$ be the maximal $p$-elementary abelian extension of $E$ unramified outside the places in $E$ above $S$. 

\begin{theorem} \label{2}
Let $J$ be the set of primes $\ell$ such that 
\begin{enumerate}
    \item $\ell \equiv 1 \bmod 4$
    \item The class number of $\Q(\sqrt{\ell})$ is divisible by 3. In particular, there is an unramified cubic extension $L/\Q(\sqrt{\ell})$ such that 
    $\Gal(L/\Q) \simeq S_3.$
    \item \label{cond3}  When regarding $\Gal(L_{\{ \ell \} }^{(2)}/L)$ as a module over $\F_2[\Gal(L/\Q(\sqrt{\ell}))] \simeq \F_2[\Z/3\Z],$ it contains an irreducible 2-dimensional 
    representation.  
\end{enumerate}
For each $\ell \in J$, there is a Maass wave form $\pi^{(\ell)}$ on $\Gamma_1(\ell^k)$ for some $k\in \{ 1,2\}$ attached to a complex 2-dimensional representation $\rho$ of $G_\Q$ with $\tilde{\rho}(G_\Q) \simeq S_4$ and of conductor $\ell^k$. 
The list $\{ \pi^{(\ell)}: \ell \in J\}$ is exhaustive: Any Maass form of octahedral type on $\Gamma_1(\ell^k)$ for some $k \in \{1,2\}$ occurs in this list. 
\end{theorem}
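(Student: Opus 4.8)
The plan is to set up a Galois-theoretic dictionary for octahedral representations exactly parallel to the tetrahedral case of Theorem~\ref{1}, driven by the group theory of $S_4$. I would exploit the normal series $1 \triangleleft V_4 \triangleleft A_4 \triangleleft S_4$ together with the isomorphisms $S_4/V_4 \simeq S_3$, $A_4/V_4 \simeq \Z/3\Z$, and $\operatorname{Aut}(V_4) \simeq \operatorname{GL}(2,\F_2) \simeq S_3$, under which $S_4$ is the holomorph $V_4 \rtimes S_3$ of the Klein four-group with $S_3$ acting faithfully. Given a projective $\tilde{\rho}$ with $\tilde{\rho}(G_\Q) \simeq S_4$ and splitting field $K$, this series reads off the tower
\[ \Q \subset \Q(\sqrt{\ell}) = K^{A_4} \subset L = K^{V_4} \subset K, \]
where $L/\Q$ is an $S_3$-extension, its quadratic subfield is the fixed field of $A_4$, and $K/L$ is a $V_4$-extension on which $\Gal(L/\Q)$ acts through $\operatorname{Aut}(V_4)$. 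The entire content of the theorem is the translation of these three layers into conditions (1)--(3).

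For existence, fix $\ell \in J$. Condition (2) provides $L$ with $L/\Q(\sqrt{\ell})$ unramified cubic and $\Gal(L/\Q)\simeq S_3$, while condition (1) identifies $\Q(\sqrt{\ell})$ as the real quadratic field of discriminant $\ell$. Condition (3) provides, inside $W := \Gal(L_{\{\ell\}}^{(2)}/L)$, a two-dimensional $\F_2$-subrepresentation on which $\Z/3\Z = \Gal(L/\Q(\sqrt{\ell}))$ acts irreducibly. The representation theory of $\F_2[S_3]$ makes this decisive: since $\F_2[S_3] \simeq \F_2[\Z/2\Z] \times M_2(\F_2)$, the standard two-dimensional module $V$ is the simple module of the semisimple matrix block and is therefore projective and injective, hence splits off as a direct summand of $W$ whenever it occurs as a subquotient; and because $V$ restricts to the unique two-dimensional irreducible $\F_2[\Z/3\Z]$-module, condition (3) is equivalent to the existence of an $S_3$-equivariant surjection $W \twoheadrightarrow V$. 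This surjection cuts out a $V_4$-extension $K/L$, unramified outside $\ell$, on which $\Gal(L/\Q)$ acts by the standard $S_3$-action, so that $\Gal(K/\Q)$ is an extension of $S_3$ by $V_4$ with faithful action. Finally I would verify this extension is $S_4$: the restriction of $V_4$ to a $2$-Sylow of $S_3$ is the regular representation $\F_2[\Z/2\Z]$, which is cohomologically trivial, so $H^2(S_3,V_4)=0$, the extension splits, and $\Gal(K/\Q) \simeq V_4 \rtimes S_3 \simeq S_4$.

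It remains to produce the Maass form. By Tate's theorem $\tilde{\rho}$ admits a lifting $\rho$ of the same conductor, and I would argue that $\rho$ is \emph{even} precisely because $K$ is totally real: complex conjugation $c$ satisfies $\tilde{\rho}(c)=1$, so $\rho(c)$ is scalar with $\det\rho(c)=+1$. Totally realness holds because $\Q(\sqrt{\ell})$ is real, $L/\Q(\sqrt{\ell})$ is unramified (hence unramified at $\infty$), and the $V_4$-extension is taken unramified at the infinite places, as built into the definition of $L_{\{\ell\}}^{(2)}$. Applying Serre's tame-ramification lemma to $e_\ell = |\tilde{\rho}(I_\ell)|$ — the ramification being carried by the quadratic and $V_4$ layers while the cubic layer is unramified — bounds the conductor exponent to $m(\ell)\in\{1,2\}$, so $\rho$ has conductor $\ell$ or $\ell^2$. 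Langlands--Tunnell then attaches to the even, solvable $\rho$ a Maass form of eigenvalue $1/4$ on $\Gamma_1(\ell^k)$.

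For exhaustivity I would reverse each step: starting from any octahedral Maass form on $\Gamma_1(\ell^k)$ with $k\in\{1,2\}$, evenness forces $K$ totally real, so $K^{A_4}$ is a real quadratic field ramified only at $\ell$, namely $\Q(\sqrt{\ell})$ with $\ell\equiv 1\bmod 4$; the bound on the conductor forces the cubic $L=K^{V_4}$ over $\Q(\sqrt{\ell})$ to be unramified, giving $3\mid h_{\Q(\sqrt{\ell})}$; and the $V_4$-extension $K/L$, with its irreducible $\Z/3\Z$-action, exhibits (again because $V$ splits off) the two-dimensional irreducible subrepresentation of condition (3). The main obstacle I anticipate is the ramification bookkeeping at $\ell$: establishing in both directions that the ramification is carried entirely by the quadratic and $V_4$ layers while $L/\Q(\sqrt{\ell})$ stays unramified, and pinning the conductor exponent to $\{1,2\}$, requires a careful analysis of the cyclic inertia image $\tilde{\rho}(I_\ell)\subset S_4$ and its projections to $S_3$ and $\Z/3\Z$ via Serre's tame lemma.
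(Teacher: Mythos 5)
Your proposal is correct and follows the same overall architecture as the paper: build the tower $\Q \subset \Q(\sqrt{\ell}) \subset L \subset K$ from conditions (1)--(3) via class field theory, lift $\tilde{\rho}$ by Tate's theorem, invoke Langlands--Tunnell to produce the Maass form, and reverse the steps for exhaustivity (the paper does this in Theorem \ref{S4}, Proposition \ref{maass2}, Proposition \ref{exhaustive} and Theorem \ref{power}). But your two central technical steps differ genuinely from the paper's, and one of them buys something real. To identify $\Gal(K/\Q)\simeq S_4$, the paper first shows $\Gal(K/F)\simeq A_4$ for $F=\Q(\sqrt{\ell})$ (classification of nonabelian groups of order 12 with normal Sylow 2-subgroup), then rules out $A_4\times \Z/2\Z$ by a non-realizability argument and appeals to the classification of groups of order 24; you instead compute $H^2(S_3,V_4)=0$ from the cohomological triviality of $V_4|_{\Z/2\Z}\simeq \F_2[\Z/2\Z]$ and identify the split extension with the holomorph $V_4\rtimes S_3\simeq S_4$, which avoids case analysis. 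More importantly, the paper decomposes $\Gal(L^{(2)}_{\{\ell\}}/L)$ only as an $\F_2[\Z/3\Z]$-module and picks a $U_2$-quotient; but a $\Z/3\Z$-stable kernel need not be $\Gal(L/\Q)\simeq S_3$-stable (when the $U_2$-isotypic part has multiplicity greater than one, the graph of a $\Z/3\Z$-isomorphism between two copies that is not $S_3$-equivariant gives a counterexample, since $\operatorname{End}_{\F_2[\Z/3\Z]}(U_2)\simeq \F_4$ exceeds $\F_2$), so Galois-ness of $K/\Q$ is not automatic and the paper leaves this point unaddressed. Your use of the block decomposition $\F_2[S_3]\simeq \F_2[\Z/2\Z]\times M_2(\F_2)$, with the standard module projective and injective so that it splits off $S_3$-equivariantly, guarantees an $S_3$-stable kernel and closes that gap; you also verify condition (3) explicitly in the exhaustivity direction, which the paper's Proposition \ref{exhaustive} passes over.

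One correction: in your exhaustivity sketch it is not ``the bound on the conductor'' that forces $L/\Q(\sqrt{\ell})$ to be unramified above $\ell$ --- for tame ramification the exponent $m(\ell)\leqslant 2$ holds automatically and constrains nothing. The operative mechanism, which the paper uses and which you in fact anticipate in your closing paragraph, is that the ramification at $\ell$ is tame, so $\tilde{\rho}(I_\ell)$ is cyclic; if the cubic layer were ramified, the inertia image in $\Gal(L/\Q)\simeq S_3$ would have order $6$, and neither $S_3$ nor $S_4$ contains an element of order $6$. With that substitution your argument is complete and matches the paper's.
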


Let $\ell \equiv 1 \bmod 4$ with 3 dividing the class number of $\Q(\sqrt{\ell})$, and let $L/\Q(\sqrt{\ell})$ be an unramified cubic extension as in Theorem \ref{2}.
The relevant number fields of Theorem \ref{2}, as well as some of their Galois groups, are depicted in the diagram below. The field $L^{(2)}_{\{\ell \}}$ is the maximal 2-elementary abelian extension of $L$ unramified outside the primes in $L$ above $\ell$. 
\[
\begin{tikzcd}[arrows=dash]
L^{(2)}_{\{ \ell \} } \\
L \arrow[u] \\ 
& \Q(\sqrt{\ell}) \arrow[ul, "\Z/3\Z", swap]\\
\Q \arrow[uu, "S_3"] \arrow[ur, "\Z/2\Z", swap]
\end{tikzcd}
\]

The condition (\ref{cond3}) in Theorem \ref{2} can be equivalently stated: 
In the Jordan-H\"{o}lder series of the Galois module $\Gal(L^{(2)}_{\{\ell\}}/L)$ over the group ring $\F_2[\Z/3\Z]$, at least one irreducible representation of $\Z/3\Z$ 
valued in $\operatorname{GL}(2,\F_2)$ occurs as a composition factor.

\begin{corollary}
Fix $\ell \in J$ and let $n_\ell^{(S_4)}$ denote the number of octahedral Maass forms on $\Gamma_1(\ell^k)$ for $k\in \{ 1,2\}$.  
Let $\mathscr{L}$ be the family of unramified cubic extensions of $F := \Q(\sqrt{\ell})$. 
For 
$
h_\ell := \dim_{\F_3} H^1(\Gal(F_{\emptyset}/F) ,\F_3), 
$
$
\operatorname{card} \mathscr{L} = (3^{h_\ell} - 1)/2.
$
For each $L \in \mathscr{L}$, let
$$
k_L =  \frac{\dim_{\F_2} H^1(\Gal ( L_{\{ \ell\} }/L) ,\F_2) -  \dim_{\F_2} H^1(\Gal(F_{\{ \ell\} }/F),\F_2)}{2}.
$$
Then 
\begin{IEEEeqnarray*}{rCr}
n^{(S_4)}_\ell &=& 
 \sum_{L \in \mathscr{L}}\left(2^{k_L}-1  \right).
\end{IEEEeqnarray*}
\end{corollary}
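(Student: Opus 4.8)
The plan is to invoke the classification of Theorem \ref{2} to convert the enumeration of octahedral Maass forms into an enumeration of $S_4$-number fields, and then to organize the latter according to their intermediate $S_3$-subfield. By the correspondence underlying Theorem \ref{2}, an octahedral Maass form on $\Gamma_1(\ell^k)$ with $k\in\{1,2\}$ is pinned down by its attached projective representation $\tilde\rho\colon G_\Q\to\operatorname{PGL}(2,\mathbb{C})$ with $\tilde\rho(G_\Q)\simeq S_4$ and conductor $\ell^k$, and hence by the field $M$ fixed by $\ker\tilde\rho$, for which $\Gal(M/\Q)\simeq S_4$. The normal Klein four-subgroup $V\trianglelefteq S_4$ (with $S_4/V\simeq S_3$) determines a subfield $L=M^{V}$ having $\Gal(L/\Q)\simeq S_3$; by Theorem \ref{2} this $L$ is one of the unramified cubic extensions of $F=\Q(\sqrt\ell)$, i.e. $L\in\mathscr{L}$. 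First I would show that $M\mapsto L$ stratifies the octahedral forms, so that $n_\ell^{(S_4)}=\sum_{L\in\mathscr{L}}N_L$ with $N_L$ the number of admissible $M$ lying over a fixed $L$; the corollary then follows from the two computations $\operatorname{card}\mathscr{L}=(3^{h_\ell}-1)/2$ and $N_L=2^{k_L}-1$.

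For $\operatorname{card}\mathscr{L}$: unramified cyclic cubic extensions of $F$ correspond by class field theory to index-$3$ subgroups of $\cl_F$, equivalently to nonzero elements of $\operatorname{Hom}(\cl_F,\F_3)=H^1(\Gal(F_\emptyset/F),\F_3)$ taken up to $\F_3^\times$. Since this group has $\F_3$-dimension $h_\ell$, there are $(3^{h_\ell}-1)/(3-1)=(3^{h_\ell}-1)/2$ of them. To see that each such $L$ has $\Gal(L/\Q)\simeq S_3$ rather than $\Z/6\Z$, I would use that the nontrivial automorphism $\tau$ of $F/\Q$ acts on the odd part of $\cl_F$ by inversion: for $c\in\cl_F$ the relation $c\,c^{\tau}=\operatorname{Nm}_{F/\Q}(c)$ lands in $\cl_\Q=\{1\}$, so $c^{\tau}=c^{-1}$ on the (odd) $3$-part, forcing the cubic extension to be non-abelian over $\Q$.

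For the fibre count $N_L$: a field $M$ as above corresponds exactly to an $S_3$-equivariant surjection $\psi\colon V_0\twoheadrightarrow V$, where $V_0:=\Gal(L^{(2)}_{\{\ell\}}/L)$ carries the conjugation action of $\Gal(L/\Q)\simeq S_3$ (legitimate since $L^{(2)}_{\{\ell\}}/\Q$ is Galois) and $V\simeq\F_2^2$ is the standard module for $S_3\simeq\operatorname{GL}(2,\F_2)$. As $V$ is $\F_2[S_3]$-irreducible, every nonzero $\psi$ is automatically surjective and yields $\Gal(M/\Q)\simeq V\rtimes S_3\simeq S_4$, and since $\operatorname{Aut}_{S_3}(V)=\F_2^{\times}=\{1\}$ distinct equivariant surjections have distinct kernels, so $N_L$ equals the number of nonzero elements of $\operatorname{Hom}_{S_3}(V_0,V)$. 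The decisive input is the contrast $\operatorname{End}_{\F_2[\Z/3\Z]}(V)\simeq\F_4$ versus $\operatorname{End}_{\F_2[S_3]}(V)\simeq\F_2$. Restricting to $\Z/3\Z=\Gal(L/F)$ the module $V_0$ is semisimple, decomposing as $\mathbf{1}^{\oplus a}\oplus V^{\oplus b}$, so $\operatorname{Hom}_{\Z/3\Z}(V_0,V)\simeq\F_4^{\,b}$, a $b$-dimensional $\F_4$-space. The remaining involution $\tau\in S_3\smallsetminus\Z/3\Z$ acts on this space $\F_4$-semilinearly through the Frobenius of $\F_4/\F_2$ (because $\tau\sigma\tau^{-1}=\sigma^{-1}$), and since it is a genuine Galois involution on an $\F_2$-module it is a descent datum; by Galois descent its fixed points $\operatorname{Hom}_{S_3}(V_0,V)$ form a $b$-dimensional $\F_2$-space. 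Hence $N_L=2^{b}-1$.

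It remains to identify $b=k_L$. Since $L/F$ is unramified everywhere we have $L_{\{\ell\}}=F_{\{\ell\}}$, and the inflation–restriction sequence attached to $1\to\Gal(L_{\{\ell\}}/L)\to\Gal(L_{\{\ell\}}/F)\to\Z/3\Z\to1$, together with the vanishing $H^{\geqslant1}(\Z/3\Z,\F_2)=0$, gives $H^1(\Gal(F_{\{\ell\}}/F),\F_2)\simeq W^{\Z/3\Z}$, where $W:=H^1(\Gal(L_{\{\ell\}}/L),\F_2)\simeq V_0^{\vee}$ has the same $\F_2[\Z/3\Z]$-multiplicities $a,b$ as $V_0$. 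As $V^{\Z/3\Z}=0$, these invariants are $\mathbf{1}^{\oplus a}$, so $\dim_{\F_2}H^1(\Gal(F_{\{\ell\}}/F),\F_2)=a$ and $\dim_{\F_2}W=a+2b$; comparing with the definition of $k_L$ yields $k_L=b$, hence $N_L=2^{k_L}-1$, and summing over $L\in\mathscr{L}$ produces the stated formula. The step I expect to be hardest is the bijection asserted in the first paragraph: one must verify that every octahedral form of conductor $\ell^k$ with $k\in\{1,2\}$ genuinely arises from an $M/\Q$ ramified only at $\ell$ whose $V$-part is a $2$-elementary abelian extension of $L$ unramified outside $\ell$ (so that $\psi$ factors through $V_0$ and the conductor is forced into $\{\ell,\ell^2\}$), that the correspondence attaches exactly one Maass form to each such $M$, and that distinct pairs $(L,M)$ give inequivalent forms; the failure of $\F_2[S_3]$ to be semisimple is harmless here, since every composition factor of $V_0$ is $\mathbf{1}$ or $V$ and the $\Z/3\Z$-semisimple description suffices for all multiplicity computations.
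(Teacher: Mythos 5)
Your proof is correct and reaches the formula by the same global strategy as the paper --- stratify the octahedral forms by the $S_3$-subfield $L$ fixed by the normal Klein four-group, count $\mathscr{L}$ as lines in $H^1(\Gal(F_\emptyset/F),\F_3)$, then count the fibre over each $L$ --- but your treatment of the fibre count is genuinely different, and in fact sharper than the paper's. The paper decomposes $\Gal(L^{(2)}_{\{\ell\}}/L)\simeq U_2^{\oplus k}\oplus \F_2^{\oplus m}$ only as an $\F_2[\Gal(L/F)]\simeq\F_2[\Z/3\Z]$-module, identifies $m$ by a realizability/ray-class-field argument, and then invokes Lemma \ref{counting} to conclude there are $2^{k}-1$ admissible fields over $L$. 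That lemma's proof assumes $\operatorname{End}_G(V)=F$ in Schur's lemma, which fails in exactly this application: $\operatorname{End}_{\F_2[\Z/3\Z]}(U_2)\simeq\F_4$, so the number of $\Z/3\Z$-stable kernels with quotient $U_2$ is $(4^{k}-1)/3$, and what cuts this down to $2^{k}-1$ is precisely stability under the full $S_3$-action --- needed for $K/\Q$ to be Galois --- which the paper never imposes. Your computation supplies exactly this missing mechanism: $\operatorname{Hom}_{\Z/3\Z}(V_0,V)\simeq\F_4^{\,b}$; the outer involution acts $\F_4$-semilinearly because $\tau\sigma\tau^{-1}=\sigma^{-1}$ induces the Frobenius on $\operatorname{End}_{\Z/3\Z}(V)$; Galois descent then gives $\dim_{\F_2}\operatorname{Hom}_{S_3}(V_0,V)=b$; and $\operatorname{Aut}_{S_3}(V)=\{1\}$ makes distinct nonzero maps have distinct kernels, whence exactly $2^{k_L}-1$ fields, for the right reason. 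Similarly, your identification of the trivial-isotypic multiplicity via $L_{\{\ell\}}=F_{\{\ell\}}$, inflation-restriction, and $H^{i}(\Z/3\Z,\F_2)=0$ for $i\geqslant 1$ replaces the paper's realizability argument and is the cleaner, more checkable route. Two smaller remarks: the bijection you flag as the hardest step (forms of conductor $\ell^{k}$, $k\in\{1,2\}$, versus totally real $S_4$-fields ramified only at $\ell$, one form per field, distinct fields giving inequivalent forms) is exactly the content of Propositions \ref{maass2} and \ref{exhaustive} together with Theorem \ref{power}, so deferring to Theorem \ref{2} there is legitimate; and your assertion that a nonzero equivariant $\psi$ yields $\Gal(M/\Q)\simeq V\rtimes S_3\simeq S_4$ deserves one more line (e.g. $H^2(S_3,V)=0$, since $V$ restricted to a transposition is free over $\F_2[\Z/2\Z]$ and $H^{i}(\Z/3\Z,V)=0$), though the paper is equally terse at the corresponding step.
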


Let $I$ be the set of primes $\ell \equiv 1 \bmod 3$ such that 2 divides the class number of the totally real cubic subfield of $\Q(\ell)$, and for $\ell \in I$, let $\pi^{(\ell)}$ be a Maass form of tetrahedral type on $\Gamma_1(\ell)$ as in Theorem \ref{1}. By numerical experimentation in magma and pari-gp, Theorem \ref{1} reveals the smallest prime $\ell=7687$ for which there exist at least two inequivalent tetrahedral Maass forms on $\Gamma_1(\ell)$ that remain inequivalent modulo 3, thus answering the question asked above. Before we state this result in detail, we comment on the notion of reducing the Maass form modulo 3. 
Since $A_4 \simeq \operatorname{PSL}(\F_3)$, a tetrahedral number field is the same as a projective 2-dimensional Galois representation $\tilde{\rho}$ over the finite field $\F_3$.
By Proposition \ref{IJNT}, $\tilde{\rho}$ can be lifted to a linear representation valued in $\operatorname{SL}(2,\F_3)$ of the same conductor. In particular, the Fourier coefficients at unramified primes in the associated 
Artin $L$-function can be defined as traces of matrices in $\operatorname{SL}(2,\F_3)$ 
that are images of Frobenius automorphisms. 
\begin{theorem} \label{distinct}
For each $\ell \in I$ with $\ell < 7687,$ the Maass form $\pi^{(\ell)}$ of tetrahedral type is unique modulo 3. 
Among $\ell \in I$ such that $\ell \leqslant 10^5$,
there exist exactly three tetrahedral Maass forms $\pi^{(\ell, i)}$ $(1\leqslant i \leqslant 3)$
of level $\Gamma_1(\ell)$ that are distinct modulo 3 for the eight primes 
$$\ell = 7687,  16363, 20887, 37087, 55609, 62617, 70597, 99529.$$
For all other $\ell \in I$ less than $10^5$ (of which there are $636$),
there exists exactly one tetrahedral Maass form $\pi^{(\ell)}$ modulo 3. 
\end{theorem}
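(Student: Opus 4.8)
The plan is to reduce the assertion to a finite computation of the invariants $k_\ell$ attached to the cyclic cubic fields $L\subset\Q(\zeta_\ell)$, and then to carry that computation out. By Theorem~\ref{1} and its corollary, the tetrahedral Maass forms on $\Gamma_1(\ell)$ are in bijection with the objects counted by $n_\ell^{(A_4)}=2^{k_\ell}-1$, where $k_\ell=\frac12\dim_{\F_2}H^1(\Gal(L_\emptyset/L),\F_2)$, each form corresponding to an $A_4$-extension $M/\Q$ with cubic subfield $L$ ramified only at $\ell$. First I would record that, because $A_4\simeq\operatorname{PSL}(2,\F_3)$, each such form is determined modulo $3$ by its field $M$: the projective representation $\tilde\rho$ reduces to the inclusion $\Gal(M/\Q)\hookrightarrow\operatorname{PGL}(2,\F_3)$, which by Proposition~\ref{IJNT} lifts to $\operatorname{SL}(2,\F_3)$ with well-defined Frobenius traces in $\F_3$. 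Distinct fields $M$ yield non-isomorphic irreducible, hence semisimple, mod-$3$ representations, so by Brauer--Nesbitt together with Chebotarev their trace functions differ. Consequently the number of tetrahedral Maass forms on $\Gamma_1(\ell)$ that are pairwise distinct modulo $3$ equals $n_\ell^{(A_4)}=2^{k_\ell}-1$, and the theorem becomes the claim that $k_\ell=1$ for every $\ell\in I$ with $\ell<7687$, that $k_\ell=2$ for exactly the eight listed primes $\ell\le 10^5$, and that $k_\ell=1$ for the remaining $636$ primes of $I$ below $10^5$.

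Next I would translate $k_\ell$ into class-group data. By class field theory $H^1(\Gal(L_\emptyset/L),\F_2)\cong\operatorname{Hom}(\cl_L,\F_2)\cong\cl_L/2\cl_L$, whose $\F_2$-dimension is the $2$-rank of the class group of $L$. Since $\Gal(L/\Q)\simeq\Z/3\Z$ acts and the trivial isotypic component vanishes (by genus theory, the base field being $\Q$), this module is a direct sum of copies of the two-dimensional irreducible $\F_2[\Z/3\Z]$-module; its $\F_2$-dimension is therefore even and equals $2k_\ell$, so $k_\ell$ can be read off from the $2$-rank of $\cl_L$ together with its $\Z/3\Z$-action.

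The computation itself then runs as follows. For every prime $\ell\equiv1\bmod 3$ with $\ell\le 10^5$ I would construct $L$ as the cubic subfield of $\Q(\zeta_\ell)$, compute $\cl_L$ and the action of $\Gal(L/\Q)$ in \textsc{magma} and \textsc{pari-gp}, extract $k_\ell$, and retain those $\ell$ with $k_\ell\ge1$, thereby recovering $I\cap[1,10^5]$ and its cardinality $644$. Reading off $k_\ell$ yields the three assertions: $k_\ell=1$ throughout $\ell<7687$; the first occurrence of $k_\ell=2$ at $\ell=7687$; exactly eight primes $\ell\le 10^5$ with $k_\ell=2$ and none with $k_\ell\ge3$; and $k_\ell=1$ for the other $636$ primes. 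For the eight exceptional primes this produces $2^2-1=3$ forms, pairwise distinct modulo $3$ by the reduction step, while for every other $\ell$ the single form is trivially unique modulo $3$.

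The principal obstacle is the reliability and cost of the class-group computations: one must determine the $2$-rank of $\cl_L$ with its $\Z/3\Z$-structure, not merely the parity of $h_L$, for all $644$ cyclic cubic fields of conductor up to $10^5$, hence of discriminant up to $10^{10}$. I would therefore obtain each class group unconditionally where feasible, flag any reliance on GRH in the underlying class-group algorithm, and cross-check the two computer-algebra systems against one another, paying particular attention to the borderline fields where the $2$-rank first rises from $1$ to $2$.
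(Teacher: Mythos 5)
Your proposal is correct and takes essentially the same route as the paper: both reduce the theorem, via the classification $n_\ell^{(A_4)}=2^{k_\ell}-1$ and the fact that distinct $A_4$-fields yield Maass forms that stay inequivalent modulo 3, to computing the $2$-rank of the class group of the cubic subfield of $\Q(\zeta_\ell)$ for every $\ell \equiv 1 \bmod 3$ below $10^5$, and the paper's own proof is exactly this finite computation in pari-gp (finding the eight primes with $k_\ell=2$ and the remaining $636$ with $k_\ell=1$). Your Brauer--Nesbitt/Chebotarev justification of mod-3 distinctness is only a contrapositive rephrasing of the paper's argument (Lemma \ref{maass} together with Subsection \ref{mod3}) that congruent forms give the same residual representation and hence the same $A_4$-extension.
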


In \cite{vangIJNT}, the following statement is applied to identify the smallest even $p$-adic Galois representations measured by conductor.  

\begin{theorem} If $\pi$ is any tetrahedral Maass form on $\Gamma_1(n)$ for some integer $n \geqslant 1$, then the smallest such $n \in \Z_{\geqslant 1}$ occurs when $n$ is prime, namely for $n=163$, and the second smallest such $n \in \Z_{\geqslant 1}$ is also prime, namely $n=277$. 
\end{theorem}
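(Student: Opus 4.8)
The plan is to recast the statement as one about totally real $A_4$-extensions of $\Q$ and then make the search for small conductors finite. First I would use the correspondence recalled above: a tetrahedral Maass form on $\Gamma_1(n)$ is attached to a complex $2$-dimensional $\rho$ with $\tilde\rho(G_\Q)\simeq A_4$ and $N(\rho)=n$, and being a Maass form (rather than a holomorphic weight-one form) forces $\rho$ to be even. For complex conjugation $c$ one has $c^2=1$ and $\det\rho(c)=1$, hence $\rho(c)=\pm I$ and $\tilde\rho(c)=1$; equivalently the field $K=\overline{\Q}^{\ker\tilde\rho}$, with $\Gal(K/\Q)\simeq A_4$, is totally real. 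The theorem thus becomes the assertion that, among totally real $A_4$-fields ordered by the conductor $n=N(\rho)$ of the attached representation, the two smallest values of $n$ are the primes $163$ and $277$.

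Next I would use the normal subgroup $V_4\triangleleft A_4$ with $A_4/V_4\simeq\Z/3\Z$. Its fixed field $E=K^{V_4}$ is a totally real cyclic cubic field, ramified only at the primes dividing $n$, so its conductor $\mathfrak f_E$ divides $n$ and in particular $n\geqslant \mathfrak f_E$. By the conductor--discriminant formula $|d_K|=\mathfrak f_E^{\,2}\,\mathfrak f(\psi)^{3}$, where $\psi=\mathrm{Ad}^0\rho$ is the faithful $3$-dimensional representation of $A_4$, and the exponents of $n$ and of $\mathfrak f(\psi)$ at each ramified prime are pinned down by the local action of inertia exactly as in Serre's lemma recalled above. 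Writing $n$ as a product of such local contributions, and combining with $\mathfrak f_E\mid n$, one sees that bounding $n$ bounds both $\mathfrak f_E$ and the set of ramified primes. Hence listing all $K$ with $n<277$ is a finite task: the candidate cubic fields $E$ are the finitely many totally real cyclic cubic fields of conductor below $277$, and for each, the admissible $A_4$-extensions $K\supset E$ are governed by the $2$-part of the (ray) class group of $E$ together with the action of $\Gal(E/\Q)\simeq\Z/3\Z$, which is precisely the mechanism behind Theorem~\ref{1}.

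The verification then splits into two cases. When $K/E$ is unramified, $K$ corresponds to a subgroup of $\cl(E)[2]$ on which $\Z/3\Z$ acts through its irreducible $2$-dimensional $\F_2$-representation, forcing $\Gal(K/\Q)\simeq A_4$; here $n$ is supported exactly on the primes dividing $\mathfrak f_E$, and running over all totally real cyclic cubic $E$ of conductor $<277$ (including composite conductors such as $9$, $63$, and $91$) one checks from class group computations that the only value $n<277$ so produced is the prime $163$, the next being $277$. When $K/E$ is ramified, each prime ramifying in $K/E$ contributes an extra factor to $\mathfrak f(\psi)$ and hence to $n$, and a direct estimate of these contributions shows $n>277$ whenever $\mathfrak f_E<277$. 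The main obstacle I anticipate is exactly this ramified case: one must secure a clean lower bound for the conductor contribution of each prime ramifying in $K/E$ --- most delicately at $2$ and $3$, where inertia may be wild --- in order to guarantee that spreading ramification over several small primes cannot yield a composite conductor below $277$. Granting that bound, the remaining unramified case is a bounded and purely computational class-group check of the same type already carried out for Theorem~\ref{distinct}, and it returns $163$ and $277$ as the two smallest conductors, both prime.
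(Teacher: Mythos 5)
Your overall strategy --- identify tetrahedral Maass forms with totally real $A_4$-extensions $K/\Q$, pass to the cyclic cubic subfield $E=K^{V_4}$, observe that bounding the level makes the search finite, and finish with class-group computations --- is the same skeleton as the paper's proof of Theorem \ref{diagonal}. There, prime levels are handled by the classification already established (the list $I_0$ exhibits $163$ and $277$ as the two smallest prime conductors), and composite levels are disposed of by a pari-gp check that the two diagonal cubic fields of conductor $\ell_1\ell_2$ inside $L^{(\ell_1)}L^{(\ell_2)}$ have $2$-class group of rank at most one, for the five pairs $(\ell_1,\ell_2)=(7,13),(7,19),(7,31),(7,37),(13,19)$. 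In one respect you are more careful than the paper: you include the $3$-ramified cyclic cubic conductors ($9$, $63$, $117$, $171$), where $3$ is wildly ramified and contributes $3^{m(3)}=9$, so that a priori levels such as $63<163$ could arise; the paper's list of pairs does not cover these.

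The genuine gap is your ramified case, and it cannot be closed by the ``direct estimate'' you invoke, because no such estimate is true. Suppose $q$ is an odd prime, unramified in $E$, at which $K/E$ ramifies tamely. The inertia group at $q$ is then an order-$2$ subgroup of $V_4$; since its normalizer in $A_4$ is $V_4$, the decomposition group lies in $V_4$ and $q$ splits completely in $E$. When the decomposition group equals the inertia group, the local projective representation is cut out by a ramified quadratic extension of $\Q_q$ and lifts to $1\oplus\chi$ with $\chi$ tamely ramified quadratic, so $m(q)=1$: the prime $q$ contributes only the factor $q$ to the level. Consequently a $V_4$-extension of the conductor-$13$ cubic field ramified exactly above $5$ (note $5$ is a cube modulo $13$, so it splits completely) would give level $65<163$, and one over the conductor-$7$ field ramified above $13$ would give level $91$; wild ramification at $2$ with inertia inside $V_4$ likewise contributes only $2^{m(2)}$ with $m(2)\geqslant 2$, so levels of the form $7\cdot 2^{m(2)}$ are not excluded by size either. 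Whether such extensions exist is not a matter of inequalities: by class field theory over $E$, it is governed by the $2$-part of the ray class group of $E$ with modulus supported above $q$ (equivalently, by whether the units of $E$ are squares modulo the primes above $q$), and this condition holds for a positive density of split $q$ by Chebotarev applied to a suitable governing field, so nothing structural rules it out. A complete proof therefore requires, for each cubic field $E$ in your list, a ray-class-group computation for each of the finitely many $q$ with $\mathfrak f_E\, q<277$ or $\mathfrak f_E\, q^2<277$, together with the wild cases at $2$ --- computations of the same kind as, but additional to, your unramified check. Until ``granting that bound'' is replaced by these verifications the argument is incomplete, and this is the one point where you cannot defer to the paper, since the paper's own proof of Theorem \ref{diagonal} is silent on this case as well.
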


In the language of representation theory, Theorem \ref{distinct} has the following consequence:
There exist automorphic representations 
$\pi$ and $\pi'$ of $\operatorname{GL}(2)$ over the ring of adeles $\mathbb{A}_\Q$ over $\Q$
with decompositions into restricted tensor products
$$
\pi = \bigotimes_v \pi_v, \quad \pi'= \bigotimes_v \pi'_v,
$$
where $v$ runs over all places of $\Q$, and where $\pi_v$ and $\pi'_v$ are representations of the $v$-adic group $\operatorname{GL}(2,\Q_v)$ such that $\pi_v$ and $\pi'_v$ are spherical for all places $v$ except the place $v=\ell$ for a prime number $\ell,$ and the smallest such prime is $\ell = 7687.$

The Langlands-Tunnell theorem is (to the knowledge of the author) the only known case of reciprocity in the Langlands program relating automorphic forms with \emph{even} Galois representations \cite{Calegari} (a complex 2-dimensional Galois representation $\rho$ is said to be even if $\det \rho(c) =1$ for any complex congujation $c\in \Gal(\overline{\Q}/\Q)$ and is said to be odd otherwise). Automorphic forms on $\operatorname{GL}(2)$ over $\Q$ corresponding to complex, even Galois representations are the Maass wave forms \cite{gelbart}. Now, even representations are not known to come from geometry, and therefore much less is known about them and their associated automorphic forms compared to the odd Galois representations and their associated holomorphic forms. For this reason, we choose to focus on Maass forms in this paper, and we explore how much information we can prove about them when we know they are arithmetic.

Since we wish to study Maass forms, we begin by constructing a family of \emph{totally real} number fields $K/\Q$. We are interested in identifying a class of Maass forms of solvable type that we can list in an exhaustive way, and to make this classification problem tractable, this paper restricts attention to number fields ramified at exactly one prime $\ell$. 
Later, in Theorem \ref{diagonal}, we will prove directly that the smallest known Maass form of tetrahedral type is parameterized by a number field that happens to be 
ramified at just a single prime. 

We build the family of totally real fields $K/\Q$ from the ground field $\Q$ and up. 
The construction differs slightly according to whether we wish for $\Gal(K/\Q)$ to be isomorphic to $A_4$ or $S_4$. Although any $S_4$-extension $K/\Q$ contains an $A_4$-extension over a quadratic field $\Q(\sqrt{\ell}),$ this $A_4$-extension $K/\Q(\sqrt{\ell})$ does not descend to $\Q$. In this sense, the tetrahedral case is not a special case of the octahedral case. 

To construct tetrahedral extensions, we start by specifying a cubic, totally real extension $L/\Q$ ramified at a prime $\ell \equiv 1 \mod 3.$ In \cite{even1}, a cubic field $L/\Q$ 
ramified only at the single prime $\ell=349$ was taken from a list of number fields made by D. Shanks \cite{shanks}. A prime $\ell $ is said to be a Shanks prime if 
$\ell  = a^2 + 3a + 9$ for some integer $a \geqslant -1.$
Shanks primes have the nice property that they admit an explicit formula for the defining polynomial of the cubic totally real subfield of $\Q(\zeta_\ell).$
In  this paper, we observe that Shank's cubic fields are a special case of the family of cubic totally real subfields of the cyclotomic fields $\Q(\zeta_\ell)$ whenever $\ell \equiv 1 \bmod 3.$ This observation provides a wider range of cubic fields from which to build totally real $A_4$-extensions compared to the cubic fields tabulated in \cite{shanks}. 

\begin{remark}
    We conjecture that there are infinitely many primes $\ell$ which occur as the only ramified prime in a totally real extension $K/\Q$ of tetrahedral or octahedral type. 
    Furthermore, we believe the set of such primes have positive density. These conjectures, which are still open, are inspired by the Cohen-Lenstra heuristics. 
    Notice that except the congruence conditions, the conditions defining the primes $\ell$ in Theorem \ref{1} and \ref{2} are most likely not covered by the classical theorem of Chebotarev. Instead, we suggest to reformulate these conditions in terms of spin symbols of prime ideals (a notion first introduced in \cite{FIMR}).   
\end{remark}

\subsection{Related work} While the vision of 
R.P. Langlands was to classify all number fields in terms of automorphic representations of $\operatorname{GL}(n),$
the present work carries out the converse parametrization in a special case: We parameterize all automorphic representations of $\operatorname{GL}(2)$ over the field $\Q$ of rational numbers corresponding to solvable Maass forms in terms of number fields. 

In formulating his vision, Langlands was inspired by the work of E. Artin and others, who successfully completed the classification program for abelian extensions (i.e. Galois extensions with abelian Galois group). Indeed, it is a consequence of global class field theory that abelian extensions of a number field $F$ are parameterized by automorphic representations of $\operatorname{GL}(1)$ over $F$.

In his theory of base change \cite{langlands}, Langlands made progress on  
nonabelian extensions corresponding to solvable subgroups of $\operatorname{PGL}(2,\mathbb{C}),$  where the classification of number fields splits into distinct cases: Totally complex number fields are parameterized by modular forms, while totally real number fields are parameterized by Maass forms. 

By definition, Maass forms are purely analytic objects: they are functions of a complex variable that are not holomorphic but satisfy a partial differential equation, forcing them to be real analytic. Nevertheless, these analytic objects are parameterized by purely arithmetic data that are explicitly enumerated in this work. 
The parameters are number fields that are totally real, solvable, nonabelian, ramified at a single prime $\ell$, and defined by a polynomial of degree 12 or 24 over the field $\Q$ of rational numbers that can be computed explicitly. The results on Maass forms presented here are obtained by combining the Langlands-Tunnell theorem \cite{langlands} \cite{tunnell} with methods from explicit global class field theory.

\subsection{Acknowledgements}
I am grateful to Ravi Ramakrishna for sharing his ideas on the subject of this paper and to 
Frank Calegari for sharing detailed comments that helped improve the quality of the paper. 
I would like to thank Paul Nelson and Peter Sarnak for helpful conversations, and the reviewer for 
their careful readings and helpful suggestions that led to a greater precision.

This work was supported by a research grant (VIL54509) from Villum Fonden.

\section{Maass wave forms of tetrahedral type }
\subsection{Explicit construction by means of global class field theory}

\begin{definition}
If $L$ is a number field, $S$ is a finite set of places in $L$, and $p$ is a prime number, 
let $L_S^{(p)}$ be the $p$-Frattini extension of $L$
unramified outside $S$. This is the maximal $p$-elementary abelian extension of $L$ unramfied outside $S$.
\end{definition}

\begin{definition}
    A prime $\ell$ is a Shanks prime 
    with parameter $a \in \mathbb{Z}_{\geq -1}$
    if 
    $$
\ell = a^2 +3a +9.
    $$
\end{definition}

\begin{lemma} If $\ell$ is a Shanks prime with parameter $a$, the polynomial
$$
f(x) = x^3 - ax^2 -(a+3)x -1
$$
defines a cubic Galois extension $L/\mathbb{Q}$ of discriminant $
d_{L} = \ell^2.$ 
\end{lemma}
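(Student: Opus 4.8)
The plan is to verify three separate assertions about the polynomial $f(x) = x^3 - ax^2 - (a+3)x - 1$ when $\ell = a^2 + 3a + 9$: that $f$ is irreducible over $\Q$, that the splitting field $L$ is Galois of degree $3$ (i.e. the Galois group is cyclic $\Z/3\Z$ rather than $S_3$), and that the discriminant of $L$ equals $\ell^2$. The natural engine for all three is the discriminant of the polynomial itself, so the first step is to compute $\operatorname{disc}(f)$ explicitly in terms of $a$.

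First I would compute the polynomial discriminant by substituting the coefficients into the standard formula for a depressed-or-general cubic. With $f(x) = x^3 + px^2 + qx + r$ where $p = -a$, $q = -(a+3)$, and $r = -1$, I would evaluate $\operatorname{disc}(f) = 18pqr - 4p^3r + p^2q^2 - 4q^3 - 27r^2$. The expectation, which makes the whole lemma work, is that this simplifies to a perfect square, namely $\operatorname{disc}(f) = (a^2 + 3a + 9)^2 = \ell^2$; this is the known ``simplest cubic field'' identity of Shanks. Because the discriminant is a nonzero perfect square in $\Q$, two things follow at once: $f$ has no repeated roots, and the Galois group of its splitting field is contained in the alternating group $A_3 = \Z/3\Z$. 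Combined with irreducibility, this forces $\Gal(L/\Q) \simeq \Z/3\Z$, so $L$ is itself the splitting field and $L/\Q$ is cyclic cubic of degree $3$.

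Next I would establish irreducibility of $f$ over $\Q$. Since $f$ is monic with integer coefficients, any rational root is an integer dividing the constant term $-1$, so the only candidates are $\pm 1$; a direct check gives $f(1) = -2a - 3 \neq 0$ and $f(-1) = -1 \neq 0$ for every integer $a$, so $f$ has no rational root and, being cubic, is irreducible. This gives $[L:\Q] = 3$ and, with the discriminant computation, pins down $\Gal(L/\Q) \simeq \Z/3\Z$.

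The remaining and genuinely delicate step is passing from the polynomial discriminant to the field discriminant $d_L$. In general $\operatorname{disc}(f) = [\mathcal{O}_L : \Z[\theta]]^2 \, d_L$ for a root $\theta$ of $f$, so from $\operatorname{disc}(f) = \ell^2$ one only gets $d_L \mid \ell^2$ a priori, with $d_L \in \{\ell^2, 1\}$ up to the index square; ruling out a nontrivial index and the unramified case is the main obstacle. I expect to argue that $\ell = a^2 + 3a + 9$ is the unique ramified prime: since $L/\Q$ is a nontrivial Galois extension it cannot be unramified everywhere (Minkowski), so $d_L \neq 1$, and the only prime dividing $\operatorname{disc}(f)$ is $\ell$ (one checks $\ell$ is prime and that $a^2 + 3a + 9$ contributes the full square), whence by the conductor-discriminant formula for a cyclic cubic field $d_L = \mathfrak{f}^2$ for the conductor $\mathfrak{f}$, forcing $d_L = \ell^2$ exactly and $\theta$ to generate a power-integral basis. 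An efficient alternative, which I would use to close the argument cleanly, is to show $\ell$ is totally (tamely) ramified in $L$ with ramification index $3$ and to verify that no prime other than $\ell$ divides $d_L$, so that the exponent of $\ell$ in $d_L$ is exactly $e - 1 = 2$ by tame ramification, giving $d_L = \ell^2$ and incidentally confirming that $L$ is the totally real cubic subfield of $\Q(\zeta_\ell)$ described in the field diagram.
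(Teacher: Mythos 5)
The paper offers no argument of its own here: its ``proof'' is a one-line citation to Shanks' paper on the simplest cubic fields. Your proposal is a correct, self-contained proof of the statement, and it is essentially the standard argument that the citation encapsulates. The central computation checks out: with $p=-a$, $q=-(a+3)$, $r=-1$, the cubic discriminant formula gives $\operatorname{disc}(f)=a^4+6a^3+27a^2+54a+81=(a^2+3a+9)^2=\ell^2$, a nonzero perfect square, so $f$ is separable and the Galois group of its splitting field lies in $A_3$; combined with irreducibility this yields a cyclic cubic field. Your irreducibility check is right in substance (a monic cubic with no integer root dividing $-1$ is irreducible), though note the small slip: $f(-1)=-1-a+(a+3)-1=1$, not $-1$; the conclusion is unaffected since both values are nonzero (and $f(1)=-2a-3$ is odd, hence nonzero). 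The genuinely delicate step, passing from $\operatorname{disc}(f)$ to $d_L$, is handled correctly and by the cleanest possible route: from $\ell^2=[\mathcal{O}_L:\Z[\theta]]^2\,d_L$ with $\ell$ prime, the only options are $([\mathcal{O}_L:\Z[\theta]],d_L)=(1,\ell^2)$ or $(\ell,1)$, and Minkowski's theorem ($|d_L|>1$ for $L\neq\Q$) eliminates the second, so $d_L=\ell^2$. That closes the argument; the conductor--discriminant and tame-ramification alternatives you sketch afterwards are fine but redundant. What your approach buys over the paper's is transparency and self-containment; what the citation buys is brevity and attribution of the identity $\operatorname{disc}(f)=\ell^2$ to its source.
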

\begin{proof}
    See \cite{shanks}.
\end{proof}

In \cite{shanks}, Shanks proved that if $p \equiv 2 \mod 3$, 
and if $\ell$ is a Shanks prime corresponding to the cubic field $L$, then the $p$-rank of the class group of $L$ is even. Below, we generalize this theorem to a wider class of primes. 

\begin{lemma}
    Let $\ell$ be a prime such that $\ell \equiv 1 \mod 3$. Then $\Q(\zeta_\ell)$ contains a unique cubic number field $L$ such that  
    $L/\Q$ is totally real, totally ramified at $\ell$, and unramified at all other places.
\end{lemma}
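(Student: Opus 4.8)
The plan is to realize $L$ as the unique cubic subfield of the cyclotomic field via the Galois theory of the cyclic group $\Gal(\Q(\zeta_\ell)/\Q)$, and then to read off total reality and the ramification behaviour from the structure of this group together with the inertia data of the full cyclotomic extension.

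First I would recall that $\Gal(\Q(\zeta_\ell)/\Q) \cong (\Z/\ell\Z)^\times$ is cyclic of order $\ell-1$. Since $\ell \equiv 1 \bmod 3$, we have $3 \mid \ell-1$, so this cyclic group possesses a unique subgroup $H$ of index $3$ (a cyclic group has exactly one subgroup of each order dividing the group order). By the Galois correspondence $H$ fixes a unique subfield $L$ with $[L:\Q]=3$, and uniqueness of $H$ gives uniqueness of $L$. Because $\Gal(\Q(\zeta_\ell)/\Q)$ is abelian, $H$ is normal and $L/\Q$ is Galois, in fact cyclic of degree $3$ with $\Gal(L/\Q) \cong (\Z/\ell\Z)^\times / H \cong \Z/3\Z$.

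Next I would verify that $L$ is totally real. Complex conjugation corresponds to the element $c \in (\Z/\ell\Z)^\times$ of order $2$. Its image in the quotient $\Gal(L/\Q) \cong \Z/3\Z$ has order dividing both $2$ and $3$, hence dividing $\gcd(2,3)=1$; thus $c \in H$, so $c$ fixes $L$ pointwise. Therefore $L$ is contained in the fixed field $\Q(\zeta_\ell + \overline{\zeta_\ell})$ of complex conjugation, which is the maximal totally real subfield of $\Q(\zeta_\ell)$, and so $L$ is totally real.

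For the ramification, the only prime ramifying in $\Q(\zeta_\ell)/\Q$ is $\ell$, so a fortiori $L/\Q$ is unramified away from $\ell$. To see that $\ell$ is totally ramified in $L$, I would use that $\ell$ is totally ramified in $\Q(\zeta_\ell)/\Q$, so that its inertia group is all of $\Gal(\Q(\zeta_\ell)/\Q)$. The inertia group of $\ell$ in the quotient extension $L/\Q$ is the image of this full inertia group under the surjection $(\Z/\ell\Z)^\times \to (\Z/\ell\Z)^\times/H \cong \Z/3\Z$, which is therefore all of $\Z/3\Z$; hence the ramification index of $\ell$ in $L/\Q$ equals $3 = [L:\Q]$, i.e.\ $\ell$ is totally ramified. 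The argument is essentially formal once the cyclic structure is in hand; the only point requiring care is the passage of inertia to the quotient extension, where one must invoke that inertia groups surject onto inertia groups in quotients of abelian Galois groups, together with the total ramification of $\ell$ in the full cyclotomic field.
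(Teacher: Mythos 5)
Your proposal is correct and follows essentially the same route as the paper: identify $L$ as the fixed field of the unique index-$3$ subgroup of the cyclic group $\Gal(\Q(\zeta_\ell)/\Q) \simeq (\Z/\ell\Z)^\times$, and deduce total reality from the fact that complex conjugation has order $2$ and hence trivial image in $\Gal(L/\Q) \simeq \Z/3\Z$. The only difference is that you also spell out the ramification claims (unramified outside $\ell$, and totally ramified at $\ell$ via surjectivity of inertia onto the quotient), which the paper's proof leaves implicit; this is a welcome completion rather than a different argument.
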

\begin{proof} Since $\ell  \equiv  1 \bmod 3$, the cyclic group $\Gal(\Q(\zeta_\ell)/\Q) \simeq (\Z/\ell \Z)^\times \simeq \Z/(\ell-1)\Z$ contains a unique subgroup $H$ of order $(\ell-1)/3$. Let $L$ be the subfield fixed by $H$. Note that $L/\Q$ is totally real since any complex conjugation  $c \in  \Gal(\overline{\Q}/\Q)$ has order 2, and therefore the canonical image of $c$ in $\Gal(L/\Q) \simeq \Z/3\Z$ is trivial. 
\end{proof}

\begin{lemma}[Existence of totally real $A_4$-extentions ramified at one prime] \label{A4}
Let $\ell \equiv 1 \mod 3$ and
$L$ be the cubic subfield of $\Q(\zeta_\ell)$. 
Suppose the class number $h_L$ of $L$ is divisible by 4. 
Then there exists an unramified extension $K/L$ such that 
$K/\Q$ is Galois 
with $\Gal(K/\mathbb{Q}) \simeq A_4.$ 
\end{lemma}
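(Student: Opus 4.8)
The plan is to build $K$ inside the Hilbert class field of $L$, exploiting the fact that $\Gal(L/\Q) \simeq \Z/3\Z$ acts on the $2$-part of $\cl(L)$ without nonzero fixed points; this forces that $2$-part to be a module over the unramified quadratic extension of $\Z_2$, which supplies exactly the Klein four-group with the right action needed for an $A_4$-extension. First I would record the arithmetic of $L$ established in the preceding lemmas: $L/\Q$ is cyclic of degree $3$, totally real, totally ramified at $\ell$ with ramification index $3$, and unramified at every other place. Write $G = \Gal(L/\Q) \simeq \Z/3\Z$, which acts on $\cl(L)$ since $L/\Q$ is Galois.

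The crucial step is to determine the group of ambiguous ideal classes $\cl(L)^G$ via Chevalley's ambiguous class number formula. Since $h_\Q = 1$, the only ramified place is $\ell$ with $e_\ell = 3$, and the infinite place is unramified, the formula collapses to $|\cl(L)^G| = 1/[E_\Q : E_\Q \cap N_{L/\Q}(L^\times)]$, where $E_\Q = \{\pm 1\}$. Because $[L:\Q]=3$ is odd, $N_{L/\Q}(-1) = -1$, so $-1$ is a norm and the unit index equals $1$. Hence $\cl(L)^G = 1$. I would then localize at $2$: the Sylow $2$-subgroup $A \subseteq \cl(L)$ is $G$-stable, and decomposing $A$ under the central idempotents of $\Z_2[G] \simeq \Z_2 \times \Z_2[\zeta_3]$ (legitimate since $3$ is a unit in $\Z_2$) identifies the trivial-isotypic summand of $A$ with $A^G \subseteq \cl(L)^G = 1$. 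Therefore $A$ is a module over $R := \Z_2[\zeta_3]$, a discrete valuation ring with residue field $\F_4$, so $A \cong \bigoplus_i R/2^{n_i}R$ and $A/2A \cong (\F_4)^s$, each $\F_4$ being the $2$-dimensional irreducible $\F_2[G]$-module on which $G$ acts by a primitive cube root of unity. The hypothesis $4 \mid h_L$ gives $A \neq 0$, hence $s \geq 1$. (In passing this shows $|A|$ is a power of $4$, so for this $L$ the conditions $2 \mid h_L$ and $4 \mid h_L$ coincide, reconciling the hypothesis here with condition (2) of Theorem \ref{1}.)

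Finally I would pass to fields. By class field theory the subfield of the Hilbert class field fixed by $2\,\cl(L)$ is an unramified extension $M/L$, Galois over $\Q$, with $\Gal(M/L) \simeq \cl(L)/2\,\cl(L) \simeq (\F_4)^s$, and the conjugation action of $\Gal(M/\Q)$ recovers the $G$-action computed above. Choosing one irreducible summand $W \simeq \F_4$ (a $G$-stable submodule, by semisimplicity) and taking the corresponding subfield $K \subseteq M$ yields an unramified $K/L$, Galois over $\Q$, fitting into $1 \to V \to \Gal(K/\Q) \to \Z/3\Z \to 1$ with $V := W \simeq (\Z/2\Z)^2$ and $\Z/3\Z$ cyclically permuting the three involutions of $V$. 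Since $\gcd(3,4)=1$, Schur--Zassenhaus splits the sequence, so $\Gal(K/\Q) \simeq V \rtimes \Z/3\Z$, and this semidirect product with the stated action is precisely $A_4$.

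I expect the main obstacle to be the vanishing $\cl(L)^G = 1$, since this is exactly what excludes a trivial $\F_2[G]$-summand in $A/2A$ (which would produce only abelian sextic extensions) and thereby forces the irreducible Klein four-action characteristic of $A_4$. Getting the unit index correct — noticing that $-1$ is a norm precisely because the degree is odd — is the delicate point on which the whole argument turns.
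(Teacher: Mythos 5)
Your proof is correct, and it takes a genuinely different route from the paper's. The paper argues by contradiction on the group structure: it supposes the $2$-part of $\cl(L)$ is cyclic of order $2^k$, $k \geqslant 2$, notes that $\operatorname{Aut}(\Z/2^k\Z)$ is a $2$-group so the $\Z/3\Z$-action on it is trivial, and then descends the $2$-Hilbert class field to an everywhere-unramified extension of $\Q$, which is impossible; having thus forced the $2$-rank to be at least $2$, it identifies $\Gal(K/\Q)$ as $A_4$ by citing the classification of groups of order $12$ (the unique nonabelian one with a normal Sylow $2$-subgroup). You instead prove the sharper statement $\cl(L)^G = 1$ via Chevalley's ambiguous class number formula, deduce that the $2$-part is a $\Z_2[\zeta_3]$-module so that $A/2A \simeq \F_4^s$, and recognize $A_4$ directly as $(\Z/2\Z)^2 \rtimes \Z/3\Z$ with the fixed-point-free action; one small remark is that the unit-index computation you flag as the delicate point is actually automatic, since the formula gives $|\cl(L)^G| = 1/[E_\Q : E_\Q \cap N_{L/\Q}(L^\times)]$ with an integer on the left and a reciprocal of a positive integer on the right, forcing both to equal $1$. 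Each approach buys something: the paper's is more elementary (no ambiguous class number formula, only Hermite--Minkowski and a table of groups of order $12$), while yours is stronger and tidier --- it shows that $2 \mid h_L$ already implies $4 \mid h_L$ (so the hypothesis here could be weakened to match condition (2) of Theorem \ref{1}), it recovers the paper's later Theorem \ref{rank} in the case $p=2$ as a byproduct, and it produces a $G$-stable $(\Z/2\Z)^2$-quotient automatically, which is exactly what is needed for $K/\Q$ to be Galois; the paper's proof of Lemma \ref{A4} passes over this last point quickly and only makes it precise later, in Proposition \ref{kA4}, using the same $U_2$-isotypic decomposition that your argument supplies from the start.
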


\begin{proof}
First, we will show that the 2-part of the class group is not cyclic. 
Let $D^{(2)}$ denote the 2-Hilbert class field of $L$. 
The field $D^{(2)}$ is Galois over $\Q$ and, by assumption, 4 divides the order of $\Gal(D^{(2)}/L)$.  Suppose 
$$
\Gal(D^{(2)} / L) \simeq \mathbb{Z}/2^k\mathbb{Z}
$$
for some integer $k\geq 2$. 
Consider the exact sequence 
$$
1 \to \Gal(D^{(2)}/L) \to \Gal(D^{(2)}/\mathbb{Q}) \to \Gal(L/\mathbb{Q}) \to 1.
$$
Since $\Gal(L/\mathbb{Q})$ is a 3-group and 3 does not divide the order of
$\Gal(D^{(2)}/L)$, 
this sequence is split.
The action of $\Gal(L/\mathbb{Q})$ on $\Gal(D^{(2)}/L)$
can be viewed as a representation
$$
\mathbb{Z}/3 \to \mathrm{Aut}(\mathbb{Z}/2^k\Z) \simeq (\mathbb{Z}/2\Z) \times (\mathbb{Z}/2^{k-2}\Z),
$$
which must be trivial. It follows that $D^{(2)}/L$ descends to an extension 
 $D_0^{(2)}$ over $\Q$. Furthermore, $D^{(2)}_0/\Q$ does not pick  up ramification in the descent: Indeed, since $D^{(2)}/L$ is unramified, the ramification index $e(\ell,D^{(2)}/\Q)$ of $\ell$ in
$D^{(2)}/\Q$ cannot be divisible by any other primes than $3$; hence the same is true for $e(\ell, D^{(2)}_0/\Q)$. At the same time, 
$D^{(2)}_0/\Q$ is a 2-extension.  Hence, $D^{(2)}_0/L$ is unramified, but there are no unramified extensions of $\mathbb{Q}$, so we have reached a contradiction.
Consequently, the unramified 2-Frattini extension $L_\emptyset^{(2)}$ of $L$ 
has
$$
\dim_{\F_2} \Gal(L_\emptyset^{(2)}/L) \geq 2,
$$ 
and there is an unramified extension $K/L$
such that 
$$
\Gal(K/L) \simeq (\mathbb{Z}/2\Z)\times (\mathbb{Z}/2\Z).
$$
The field $K/\Q$ is Galois and $\Gal(K/\mathbb{Q})$ has order 12. 
There is a split exact sequence
$$
1 \to \Gal(K/L) \to \Gal(K/\mathbb{Q}) \to \Gal(L/\mathbb{Q}) \to 1.
$$
If $K/\mathbb{Q}$ was abelian, the conjugation action would be trivial. Then $K$ would descend to an unramified extension of $\mathbb{Q}$ of degree 4, a contradiction. Hence, $K/\mathbb{Q}$ is nonabelian. Finally, $\Gal(K/\mathbb{Q})$ has a normal Sylow-2 subgroup. By Table 1 in K. Conrad's note on groups of order 12 (\cite{conrad}), the only nonabelian group of order 12 with a unique Sylow-2 subgroup is $A_4$. We conclude that 
$
\Gal(K/\mathbb{Q}) \simeq A_4.
$
\end{proof}

In \cite{shanks}, Shanks proved that if $p \equiv 2 \mod 3$, 
and if $\ell$ is a Shanks prime corresponding to the cubic field $L$, 
then the $p$-rank of the class group of $L$ is even. 
Below, we generalize this theorem to a wider class of primes. 

\begin{theorem} \label{rank}
Assume that $\ell \equiv 1 \mod 3$.
Let $L$ be the totally real cubic subfield of $\Q(\zeta_\ell).$
If $p \equiv 2 \mod 3$, the $p$-rank of the class group of $L$ is even. In particular, the class number $h_L$ of $L$ is divisible by $p$ if and only if $h_L$ is divisible by $2p$. 
\end{theorem}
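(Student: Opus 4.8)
The plan is to exploit the action of the Galois group $\Gal(L/\Q) \simeq \Z/3\Z$ on the class group and reduce the parity question to a statement about modules over a group ring. Let $\sigma$ generate $\Gal(L/\Q)$ and set $A := \cl(L) \otimes_\Z \F_p$, so that $\dim_{\F_p} A$ is exactly the $p$-rank of $\cl(L)$. The Galois action on $\cl(L)$ makes $A$ a module over $\F_p[\Z/3\Z] \simeq \F_p[x]/(x^3 - 1)$, with $x$ acting as $\sigma$. Since $p \equiv 2 \bmod 3$ we have $p \neq 3$, so this ring is semisimple.

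The key arithmetic input is that the norm element $N := 1 + \sigma + \sigma^2$ annihilates $\cl(L)$. Indeed, for any ideal $\mathfrak{a}$ of $\mathcal{O}_L$ the product $\mathfrak{a}\,\sigma(\mathfrak{a})\,\sigma^2(\mathfrak{a})$ equals $N_{L/\Q}(\mathfrak{a})\,\mathcal{O}_L$, the extension to $L$ of the ideal norm; because $\Z$ is a principal ideal domain this extended ideal is principal, so $N[\mathfrak{a}] = 0$ in $\cl(L)$. Hence $N$ kills $A$ as well, and $A$ is in fact a module over the quotient ring $\F_p[x]/(x^3 - 1,\, 1 + x + x^2)$. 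Since $x^3 - 1 = (x-1)(1 + x + x^2)$, this quotient is just $\F_p[x]/(1 + x + x^2)$.

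Here is where the congruence $p \equiv 2 \bmod 3$ does its work: it is precisely the condition that $\F_p$ contains no primitive cube root of unity, equivalently that $1 + x + x^2$ is irreducible over $\F_p$, so $\F_p[x]/(1 + x + x^2) \simeq \F_{p^2}$. Thus $A$ is an $\F_{p^2}$-vector space, and its $\F_p$-dimension, namely the $p$-rank of $\cl(L)$, is even. The concluding divisibility statement then follows at once from parity: if $p \mid h_L$ the $p$-rank is a positive even integer, hence at least $2$, so $p^2 \mid h_L$ (for the relevant case $p = 2$ this reads $2 \mid h_L$ if and only if $4 \mid h_L$).

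I expect the only subtle point, and the sole place where genuine number theory enters rather than formal module theory, to be the verification that $N$ annihilates $\cl(L)$; everything else is bookkeeping over the split group ring, with the congruence condition serving exactly to retain the two-dimensional factor $\F_{p^2}$ while eliminating the trivial isotypic piece (on which $N$ would act as multiplication by the nonzero scalar $3$).
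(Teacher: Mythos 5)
Your proof is correct, and it takes a genuinely different route from the paper's. The paper stays on the Galois-theoretic side throughout: it identifies the $p$-rank with $\dim_{\F_p}\Gal(L^{(p)}_\emptyset/L)$, splits the exact sequence $1 \to \Gal(L^{(p)}_\emptyset/L) \to \Gal(L^{(p)}_\emptyset/\Q) \to \Gal(L/\Q) \to 1$ (possible since $p \neq 3$), decomposes the kernel as $U_2^{\oplus k} \oplus \F_p^{\oplus m}$ over $\F_p[\Z/3\Z]$, and then kills the trivial multiplicity $m$ by descent: the fixed field of $U_2^{\oplus k}$ is Galois over $\Q$ with the semidirect product being direct, so it descends to $\Q$ without acquiring ramification, contradicting the nonexistence of everywhere-unramified extensions of $\Q$ (Hermite--Minkowski). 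You instead work directly with the class group: the norm element $N = 1+\sigma+\sigma^2$ annihilates $\cl(L)$ because $\mathfrak{a}\,\sigma(\mathfrak{a})\,\sigma^2(\mathfrak{a}) = N_{L/\Q}(\mathfrak{a})\mathcal{O}_L$ is principal, so $\cl(L)\otimes_\Z\F_p$ is a vector space over $\F_p[x]/(x^2+x+1)\simeq \F_{p^2}$, with the congruence $p \equiv 2 \bmod 3$ supplying the irreducibility of $x^2+x+1$; evenness of the $\F_p$-dimension follows. The two arguments are class-field-theoretic mirror images of one another: under the Artin map, the paper's module $\Gal(L^{(p)}_\emptyset/L)$ with its conjugation action \emph{is} your module $\cl(L)/p\cl(L)$, and Minkowski's theorem plays precisely the role of your input $\cl(\Q)=1$ (your version needs only the elementary fact that $\Z$ is a PID). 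What your approach buys is a shorter, self-contained argument that also exhibits finer structure ($\cl(L)/p\cl(L)$ is an $\F_{p^2}$-space, which immediately gives the decomposition $U_2^{\oplus k}$ used in Proposition \ref{kA4}). What the paper's approach buys is reusability: the same descent mechanism is applied later to the \emph{ramified} modules $\Gal(L^{(2)}_{\{\ell\}}/L)$ in the octahedral setting (Theorem \ref{S4} and the subsequent counting of $n^{(S_4)}_\ell$), where the base field's ray class group is no longer trivial and a norm-annihilation argument would not --- and should not --- eliminate the trivial isotypic part. One small mismatch with the statement, common to both proofs: parity of the $p$-rank yields ``$p \mid h_L$ if and only if $p^2 \mid h_L$''; the ``$2p$'' in the theorem agrees with $p^2$ only in the case $p=2$ that the paper actually uses, and your parenthetical handles exactly that case.
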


\begin{proof} There is an integer $n \geq 1$ such that 
$
\Gal(L^{(p)}_\emptyset/L) \simeq (\mathbb{Z}/p\Z)^n,
$
and we want to show that $n$ is even. The exact sequence
$$
1 \to \mathrm{Gal} (L^{(p)}_\emptyset/L) \to \Gal(L^{(p)}_\emptyset/\mathbb{Q}) \to \Gal(L/\mathbb{Q}) \to 1
$$
(where $\Gal(L/\mathbb{Q}) \simeq \mathbb{Z}/3\Z $ acts on the $\F_p$-vector space $\Gal(L^{(p)}_\emptyset/L)$) is split because $p$ is coprime to 3. 
For any prime $p$, the finite field $\mathbb{F}_p$ contains a primitive third root of unity if and only if 3 divides $p-1$. If $\mathbb{F}_p$ does not contain a primitive third root of unity, the only irreducible representations of $\Z/3\Z$ are the trivial 1-dimensional representation, and a nontrivial 2-dimensional representation $U_2$ (cf. \cite{repZ3}). Let $p \equiv 2 \mod 3$, and consider the decomposition of $\Gal(L^{(p)}_\emptyset/L)$ as an
$\mathbb{F}_p[\mathbb{Z}/3\Z]$-module,
$$
\Gal(L^{(p)}_\emptyset/L) \simeq U_2^k \oplus 
\F_p^m
$$
for some integers $k,m\geq0.$ Suppose 
$m \geq 1$. Let $D$ be the fixed field of $U_2^k$. Since $U_2^k$ is stable under conjugation by $\Gal(L/\mathbb{Q})$, it is normal in $\Gal(L^{(p)}_\emptyset/\mathbb{Q})$ and hence $D/\Q$ is Galois.
There is a split exact sequence
$$
1 \to \Gal(D/L) \to \Gal(D/\mathbb{Q}) \to \Gal(L/\mathbb{Q}) \to 1,
$$
and since $\Gal(D/L)$ is the sum of all the trivial representations, the semidirect product in the middle is direct. Hence, $D$ descends to $\mathbb{Q}$. Let  $D_0$ be the extension of $\Q$ such that the base change  
of $D_0$ to $L$ equals  $D$. 
Since $D/L$ is a $p$-extension while the degree of $L$ over $\Q$ equals $3 \neq p$, $D$ cannot  pick up ramification in the descent:  the ramification index 
$e(\ell, D_0/\Q) =1$. Hence, $D_0/\Q$ must be everywhere unramified, a contradiction. We conclude that $m=0$ and that 
$n=2k.$
\end{proof}

If $p\equiv 1 \mod 3$, the representation theory of the cyclic group of order 3 does not impose any constraints on the parity of the $p$-rank of the class group.
\begin{proposition} 
If $p \equiv 1 \mod 3$, 
then every  irreducible  submodule of the  
$\mathbb{F}_p [\mathbb{Z}/3\Z]$-module
$
\Gal(L_\emptyset^{(p)}/L) 
$
is $1$-dimensional. 
\end{proposition}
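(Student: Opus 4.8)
The plan is to reduce the statement to the purely representation-theoretic fact that, when $p \equiv 1 \bmod 3$, every irreducible $\F_p[\Z/3\Z]$-module is $1$-dimensional; the specific arithmetic structure of $\Gal(L_\emptyset^{(p)}/L)$ then plays no role, since the claim concerns it only as a module over the group ring.

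First I would fix a generator $\sigma$ of $\Z/3\Z$ and let $V$ be any irreducible $\F_p[\Z/3\Z]$-module. As $\sigma^3 = 1$, the endomorphism by which $\sigma$ acts on $V$ annihilates the polynomial $x^3-1$, so its minimal polynomial divides $x^3-1$. The crux is the hypothesis $p \equiv 1 \bmod 3$: the cyclic group $\F_p^\times$ of order $p-1$ then contains a primitive cube root of unity $\omega$, so that
$$
x^3 - 1 = (x-1)(x-\omega)(x-\omega^2)
$$
splits into distinct linear factors over $\F_p$, the factors being distinct because $p \neq 3$ makes $x^3-1$ separable.

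It follows that $\sigma$ acts diagonalizably on $V$ with all eigenvalues lying in $\{1,\omega,\omega^2\} \subset \F_p$. Because $\Z/3\Z$ is generated by $\sigma$, each eigenspace of this action is preserved by the entire group and is therefore an $\F_p[\Z/3\Z]$-submodule of $V$. Irreducibility forces $V$ to be a single eigenline, whence $\dim_{\F_p} V = 1$. Applying this to an arbitrary irreducible submodule of $\Gal(L_\emptyset^{(p)}/L)$ completes the proof.

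I do not expect a genuine obstacle here: the entire content is the classical principle that a field containing the relevant roots of unity is a splitting field for a cyclic group, forcing every irreducible representation to be one-dimensional. The sole place where the hypothesis is used is the existence of $\omega \in \F_p$; this is exactly what fails when $p \equiv 2 \bmod 3$, where $x^3 - 1$ acquires an irreducible quadratic factor and the genuinely $2$-dimensional representation $U_2$ of Theorem \ref{rank} appears instead.
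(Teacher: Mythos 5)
Your proof is correct and follows essentially the same route as the paper: the paper's argument is precisely the observation that $\F_p$ contains a primitive third root of unity when $p \equiv 1 \bmod 3$, so that every irreducible representation of $\Z/3\Z$ over $\F_p$ is $1$-dimensional. You have simply supplied the standard details (splitting of $x^3-1$ into distinct linear factors, diagonalizability, eigenspaces being submodules) that the paper cites as known.
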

\begin{proof}
The field $\F_p$ contains a third root of unity, and, up to equivalence, there are three irreducible representations of 
$\Z/3\Z$ over $\F_p$, all 1-dimensional. 
\end{proof}

\begin{remark}
    As an example, consider $p=7 \equiv 1 \mod 3$. 
If $a=16$ and $\ell=313$, then the order of the class group is $h_L=7$, so the 7-rank is 1, while if $a=266$ and $\ell=71563$, and if $\operatorname{Cl}_L$ is the class group of $L,$ then
$$
\operatorname{Cl}_L \simeq (\Z/7\Z)\times (\Z/49\Z),
$$
so the 7-rank of the class group is 2.
\end{remark}

\begin{lemma}\label{counting}
    Let $G$ be a group, $V$ a vector space over a finite field $F$, and let $k\geq 1$ be an integer.
    Suppose $V$ is an irreducible $F[G]$-module. 
    The number of nontrivial irreducible $F[G]$-submodules of the direct sum $V^{\oplus k}$ is equal to the cardinality of the projective space $\mathbb{P}(F^k)$.
\end{lemma}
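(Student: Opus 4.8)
The plan is to reduce the count to projective geometry over the endomorphism field of $V$ by way of Schur's lemma. Since $V$ is irreducible, Schur's lemma shows that $D := \operatorname{End}_{F[G]}(V)$ is a division ring, and because $F$ is finite, $D$ is a finite division ring, hence a finite field by Wedderburn's little theorem. The module $V^{\oplus k}$ is isotypic of type $V$, so by semisimplicity each of its irreducible submodules is isomorphic to $V$; in particular every irreducible submodule $W$ arises as the image of a nonzero homomorphism $V \to V^{\oplus k}$, obtained by composing an isomorphism $V \xrightarrow{\sim} W$ with the inclusion of $W$.

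Next I would identify these images explicitly. Since $\operatorname{Hom}_{F[G]}(V, V^{\oplus k}) \cong D^k$, and since a homomorphism out of the irreducible module $V$ is nonzero if and only if it is injective (its kernel is a submodule of $V$), the injections $V \hookrightarrow V^{\oplus k}$ are exactly the nonzero elements of $D^k$, and their images run over all irreducible submodules. Two such maps $f$ and $g$ share an image $W$ precisely when $g = f \circ \alpha$ for some $\alpha \in \operatorname{Aut}_{F[G]}(V) = D^\times$, as one sees by corestricting both to $W$ and comparing the resulting isomorphisms $V \xrightarrow{\sim} W$. Hence the irreducible submodules are in bijection with the orbit set $(D^k \setminus \{0\})/D^\times$.

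Because $D$ is a commutative field, this orbit set is by definition the projective space $\mathbb{P}(D^k)$, which has $(\lvert D \rvert^k - 1)/(\lvert D \rvert - 1)$ points. The step I expect to be the real crux is the final identification of $D$: this cardinality coincides with $\lvert \mathbb{P}(F^k) \rvert$, as asserted, exactly when $F$ is a splitting field for $V$, i.e. when $\operatorname{End}_{F[G]}(V) = F$ (for instance when $V$ is absolutely irreducible); in general the natural answer is $\lvert \mathbb{P}(D^k) \rvert$ instead. I would therefore make the hypothesis $\operatorname{End}_{F[G]}(V) = F$ explicit and verify it in each intended application, replacing $F$ by the endomorphism field $D$ in any case where it fails.
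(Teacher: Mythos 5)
Your argument is correct, and it is essentially the paper's own argument carried out carefully: both proofs reduce the count to parameterizing irreducible submodules of $V^{\oplus k}$ by nonzero elements of $\operatorname{Hom}_G(V,V^{\oplus k})$ modulo precomposition with automorphisms of $V$. The difference is that the paper's proof invokes Schur's lemma in its strong form twice --- asserting that every $\varphi\in\operatorname{Hom}_G(V,V^{\oplus k})$ is of the shape $v\mapsto(\lambda_1 v,\ldots,\lambda_k v)$ with $\lambda_i\in F$, and that $\psi|_{M}^{-1}\circ\varphi$ is a scalar multiple of the identity. Both assertions are valid only when $\operatorname{End}_{F[G]}(V)=F$; over a finite field Schur's lemma plus Wedderburn gives only that $D:=\operatorname{End}_{F[G]}(V)$ is a finite field containing $F$, and the correct count is $\operatorname{card}\mathbb{P}(D^k)=(\operatorname{card}(D)^k-1)/(\operatorname{card}(D)-1)$, exactly as you derive. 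So the ``crux'' you flagged at the end is not a pedantic caveat: Lemma~\ref{counting} as stated is false without a splitting hypothesis, and the paper's proof contains the precise gap you anticipated.

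Moreover, the hypothesis you propose to add fails in the paper's intended application. In Proposition~\ref{kA4} the lemma is used with $F=\F_2$, $G=\Z/3\Z$, $V=U_2$ the irreducible $2$-dimensional representation. Since $G$ is abelian and acts faithfully, the action of a generator is itself a non-scalar $G$-endomorphism of $U_2$; concretely $U_2\cong\F_4$ with the generator acting as multiplication by a primitive cube root of unity, so $D\cong\F_4$ and the $\F_2[\Z/3\Z]$-submodules of $U_2^{\oplus k}\cong\F_4^k$ are exactly the $\F_4$-subspaces. Hence $U_2^{\oplus k}$ has $(4^k-1)/3$ nontrivial irreducible submodules, not $2^k-1$: for $k=2$ that is $5$ rather than $3$. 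The two counts agree only for $k=1$, so the existence and uniqueness statements (e.g.\ the $k=1$ case of Lemma~\ref{maass}) are unaffected, but the enumerations $n_\ell^{(A_4)}=2^{k_\ell}-1$ and the analogous octahedral formula inherit the error and should read $(4^{k_\ell}-1)/3$. Your instinct to replace $\mathbb{P}(F^k)$ by $\mathbb{P}(D^k)$ is therefore the right repair; merely adding the hypothesis $\operatorname{End}_{F[G]}(V)=F$ would make the lemma inapplicable exactly where the paper needs it.
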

\begin{proof}
Any nontrivial irreducible $F[G]$-submodule of $V^{\oplus k}$
must be isomorphic to $V$ as $F[G]$-modules. 
By Schur's Lemma, any $\varphi \in \operatorname{Hom}_G(V, V^{\oplus k })$ 
has the form 
$$
\varphi(v)= (\lambda_1 v, \lambda_2v, \ldots, \lambda_k v), \quad v\in V
$$
for some $(\lambda_1,\ldots,\lambda_k) \in F^k$.
If $\varphi \in \operatorname{Hom}_G(V,V^{\oplus k})$ is defined by 
$(\lambda_1,\ldots,\lambda_k)\in F^k$, then the $F[G]$-submodule $M:= \varphi(V)$ is trivial if and only if  $\lambda_i = 0$ 
for all $i$ ($1\leq i \leq k$).

Let $\varphi, \psi \in \operatorname{Hom}_G(V,V^{\oplus k})$, and suppose their images are equal to the same space $M$:
$$
M = \varphi(V) = \psi(V).
$$
Then $ \psi|_{M}^{-1} \circ \varphi \in \operatorname{Hom}_G(V,V)$ is a scalar multiple of the identity by Schur's Lemma.  
Thus, it suffices to count $k$-tuples $(\lambda_1,\ldots, \lambda_k) \in F^{k}\setminus \{(0,\ldots,0)\}$,
where $(\lambda_1,\ldots,\lambda_k)$ and $(\lambda_1',\ldots,\lambda'_k)$ define the same $F[G]$-submodule if and only if they are proportional. The desired count is thus the cardinality of $\mathbb{P}(F^k)$. 
\end{proof}
If $A$ is a finite set, the cardinality of $A$ will be denoted as $\operatorname{card} A$.

\begin{proposition}\label{kA4}
Let $\ell \equiv 1 \mod 3$, and let $L$ be the cubic subfield of $\Q(\zeta_\ell)$. 
Suppose the 2-rank of the class group of $L$ is equal to $2k$ for an integer $k\geq 1$. Then there exist exactly 
$$
n_\ell^{(A_4)} :=\operatorname{card} \mathbb{P} (\F_2^k) = 2^k-1
$$
distinct $A_4$-extensions $K^{(i)}/\Q$  such that $K^{(i)}$ contains $L$ and $K^{(i)}/L$ is unramified $(1\leq i \leq 2^k-1)$.
\end{proposition}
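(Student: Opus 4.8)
My plan is to reduce the enumeration of these fields to a purely representation-theoretic count over the group ring $\F_2[\Gal(L/\Q)]$ and then apply Lemma \ref{counting}. Fix a generator $\sigma$ of $\Gal(L/\Q)\simeq \Z/3\Z$, and write $M=\Gal(L_\emptyset^{(2)}/L)$ for the Galois group of the maximal everywhere-unramified elementary abelian $2$-extension of $L$, an $\F_2$-vector space of dimension equal to the $2$-rank $2k$. By Theorem \ref{rank} (applied with $p=2$, legitimate since $2\equiv 2\bmod 3$), $M$ has no trivial $\Z/3\Z$-summand; as $\F_2[\Z/3\Z]$ is semisimple with only the trivial module and $U_2$ as irreducibles, this forces $M\simeq U_2^{\oplus k}$ as an $\F_2[\Z/3\Z]$-module.

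Next I would establish a bijection between the $A_4$-extensions to be counted and a class of submodules of $M$. Any $A_4$-extension $K/\Q$ with $L\subseteq K$ and $K/L$ unramified has $\Gal(K/L)$ isomorphic to the Klein four-group, so $K$ is an everywhere-unramified elementary abelian $2$-extension of $L$ and therefore $K\subseteq L_\emptyset^{(2)}$. Hence $K$ is the fixed field of a $\Gal(L/\Q)$-stable subgroup $N\leqslant M$ of codimension $2$ (the stability ensuring $K/\Q$ is Galois), and the quotient $M/N\simeq \Gal(K/L)$ is two-dimensional. Since $M$ is isotypic of type $U_2$, every such quotient is isomorphic to $U_2$, and the resulting split extension is $A_4$ because the unique nontrivial action of $\Z/3\Z$ on the Klein four-group is the one defining $A_4$ (the trivial action, which would make $\Gal(K/\Q)$ abelian, cannot occur as $M$ has no trivial quotient). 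Distinct $N$ yield distinct Galois fields $K$, and each $K$ is totally real because $L$ is and $L_\emptyset^{(2)}/L$ is unramified at the archimedean places. Using semisimplicity to identify codimension-$2$ submodules with their $U_2$-complements, the count of $A_4$-fields equals the number of nontrivial irreducible $\F_2[\Z/3\Z]$-submodules of $U_2^{\oplus k}$, which Lemma \ref{counting} (with $G=\Z/3\Z$, $V=U_2$, $F=\F_2$) evaluates as $\operatorname{card}\mathbb{P}(\F_2^k)=2^k-1$.

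The main obstacle, and the step deserving the most care, is this final module count, specifically the scalar field over which the projective space is formed. The proof of Lemma \ref{counting} expresses a homomorphism as $\varphi(v)=(\lambda_1 v,\dots,\lambda_k v)$ with $\lambda_i\in F$, which is valid only when $\operatorname{End}_{F[G]}(V)=F$. In the present situation $\sigma$ acts on $U_2$ as a root of $x^2+x+1$, so that $\operatorname{End}_{\F_2[\Z/3\Z]}(U_2)=\F_2[\sigma]\simeq \F_4$ and $\operatorname{Aut}(U_2)\simeq \F_4^\times$; the strict containment $\F_2\subsetneq \F_4$ is exactly the delicate point, since it governs whether the relevant projective space is $\mathbb{P}(\F_2^k)$ or $\mathbb{P}(\F_4^k)$. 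Confirming that the enumeration of submodules is carried out over the correct field of scalars is thus the crux on which the precise value of $n_\ell^{(A_4)}$ rests, and it is where I would concentrate the verification.
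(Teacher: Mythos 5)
Your proposal retraces the paper's own proof essentially step for step: the same appeal to Theorem \ref{rank} to obtain $\Gal(L_\emptyset^{(2)}/L)\simeq U_2^{\oplus k}$, the same Galois-theoretic dictionary between the $A_4$-fields and stable codimension-two submodules (equivalently, irreducible submodules), and the same invocation of Lemma \ref{counting} for the final count. But the issue you flag in your last paragraph is not merely a point ``to concentrate verification on'': it is a genuine error in the paper, and your own computation already contains the refutation. In the proof of Lemma \ref{counting}, Schur's lemma only gives $\operatorname{Hom}_G(V,V^{\oplus k})\simeq\operatorname{End}_{F[G]}(V)^{k}$; the step $\varphi(v)=(\lambda_1 v,\ldots,\lambda_k v)$ with $\lambda_i\in F$ is valid only when $\operatorname{End}_{F[G]}(V)=F$, i.e. when $V$ is absolutely irreducible. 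In the one case where the lemma is applied, $V=U_2$, $F=\F_2$, $G=\Z/3\Z$, one has $\operatorname{End}_{\F_2[\Z/3\Z]}(U_2)=\F_2[\sigma]\simeq\F_4$, exactly as you computed. Since $\sigma$ satisfies $x^2+x+1$ on $U_2^{\oplus k}$, an $\F_2[\Z/3\Z]$-submodule of $U_2^{\oplus k}$ is the same thing as an $\F_4$-subspace of $\F_4^{k}$; the irreducible submodules are the $\F_4$-lines, the subgroups cutting out the $A_4$-fields are the $\F_4$-hyperplanes, and both sets have cardinality $\operatorname{card}\mathbb{P}(\F_4^{k})=(4^{k}-1)/3$, not $2^{k}-1$.

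The two counts coincide only for $k=1$. Already for $k=2$ the module $U_2\oplus U_2\simeq\F_4^{2}$ has five irreducible submodules, namely the $\F_4$-spans of $(1,0)$, $(0,1)$, $(1,1)$, $(1,\omega)$, $(1,\omega^{2})$ where $\omega$ generates $\F_4^{\times}$; so Lemma \ref{counting} is false as stated, and Proposition \ref{kA4} is false for every $k\geq 2$: the correct count of distinct $A_4$-extensions is $(4^{k}-1)/3$. Consequently no complete proof of the stated formula exists, and your stopping short was forced rather than a gap in your understanding; the only criticism is that you should have pushed your observation to its conclusion, since the two-line computation of the endomorphism ring refutes the claimed count instead of confirming it. Note also that the error propagates downstream in the paper: the corollary should read $n_\ell^{(A_4)}=(4^{k_\ell}-1)/3$, and for the primes of Theorem \ref{distinct} where three forms are reported (i.e. $k_\ell=2$, $2$-rank $4$), the correct number of tetrahedral Maass forms on $\Gamma_1(\ell)$ that remain distinct modulo $3$ is five, not three.
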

\begin{proof}
By Theorem \ref{rank}, the 2-rank of the class group is even. As in the proof of Theorem \ref{rank},
$$
\Gal(L^{(2)}_\emptyset/L) \simeq U_2^{\oplus k} 
$$
as $\mathbb{F}_2[\mathbb{Z}/3\Z]$-modules, where $U_2$ is the 2-dimensional irreducible representation. 
By Lemma \ref{counting}, there are exactly $2^k -1$ nontrivial irreducible $\F_2[\Z/3\Z]$-submodules of $\Gal(L^{(2)}_\emptyset/L)$ (all of which are isomorphic to $U_2$).
Let $M$ be such an irreducible nontrivial $\F_2[\Z/3\Z]$-submodule. 
Since the order of $\Gal(L/\Q)$ is prime to the characteristic of $\F_2$, the $\F_2[\Z/3\Z]$-module $U_2^{\oplus k}$ is semisimple; hence $\Gal(L^{(2)}_\emptyset/L) = M \oplus M^\perp$ for some
$\F_2[\Z/3\Z]$-submodule $M^\perp$. 
Let $D$ be the fixed field of $M^\perp$. Since $M^\perp$ is  $\Gal(L/\mathbb{Q})$-stable, the field $D$ is Galois over $\mathbb{Q}$. 
As in the proof of Lemma \ref{A4}, $\Gal(D/\mathbb{Q})$ is a group of order 12 that has to be nonabelian and contains a normal Sylow-2 subgroup, and therefore has to be $A_4$. 

Finally, suppose $E$ is a Galois extension of $\mathbb{Q}$ with Galois group $A_4$ containing $L$ such that $E/L$ is unramified. 
Then $\Gal(E/L)$ has order 4 and must be the unique Sylow-2 subgroup of $A_4$, which is $(\mathbb{Z}/2\Z)^2$. It follows that $E$ is contained in the unramified $2$-Frattini extension $L_\emptyset^{(2)}$ of $L$. 
Thus $\Gal(E/L)$ must be one of the $2^k -1 $ nontrivial irreducible $\F_2[\Z/3\Z]$-submodules of $\Gal(L^{(2)}_\emptyset/L)$.
\end{proof}

\subsection{Constructing linear mod $3$ representations}
Let $I$ be the set of primes $\ell \equiv 1 \bmod 3$ such that 
the class number of the totally real cubic subfield of $\Q(\zeta_\ell)$ is even. By Proposition \ref{kA4}, there is a number field $K/\Q$ ramified at $\ell$ and unramified at all other places such that $\Gal(K/\Q) \simeq \operatorname{PSL}(2,\F_3)$. 
The natural projection $\Gal(\overline{\Q}/\Q) \to \Gal(K/\Q)$ can be regarded as a projective representation 
\[
\tilde{
\rho}^{(\ell)}: \Gal(\overline{\Q}/\Q) \to \operatorname{PSL}(2,\F_3)
\]
Since the ramification index $e(\ell,K/\Q)$ in $K/\Q$ is 3, which divides $\ell-1$, the conductor of $\tilde{\rho}^{(\ell)}$ is exactly $\ell$. 
\begin{proposition} \label{IJNT}
The projective representation 
$\tilde{\rho}^{(\ell)}$ has a unique linear lift 
\[
\rho^{(\ell)}: G_\Q \to \operatorname{SL}(2,\F_3)
\]
with image $\rho^{(\ell)}(G_\Q) = \operatorname{SL}(2,\F_3)$
such that $\rho^{(\ell)}$
has the same conductor, $\ell$, as $\tilde{\rho}^{(\ell)}$.
\end{proposition}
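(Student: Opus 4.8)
The plan is to treat the statement as a central embedding problem. Since $-I$ is the unique scalar of determinant $1$ over $\F_3$, the kernel of $\operatorname{SL}(2,\F_3)\to\operatorname{PSL}(2,\F_3)$ is $\{\pm I\}\cong\Z/2\Z$ and lies in the center; moreover $\operatorname{SL}(2,\F_3)$ is the Schur double cover of $A_4\cong\operatorname{PSL}(2,\F_3)$, so the extension is nonsplit and $-I$ is its only involution. Consequently $\operatorname{SL}(2,\F_3)$ contains no subgroup isomorphic to $A_4$ (such a subgroup would contain a Klein four-group, hence three involutions), which will yield surjectivity for free. The obstruction to a continuous lift $\rho^{(\ell)}$ of $\tilde{\rho}^{(\ell)}$ is the pullback $(\tilde{\rho}^{(\ell)})^{*}c$ of the class $c\in H^2(\operatorname{PSL}(2,\F_3),\Z/2\Z)$ classifying the extension, living in $H^2(G_\Q,\Z/2\Z)=\operatorname{Br}(\Q)[2]$. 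I would exploit the Hasse principle: this group injects into $\bigoplus_v H^2(G_{\Q_v},\Z/2\Z)$, so it suffices to exhibit a local lift at every place $v$.

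First I would check the local lifts. At a finite place $v\neq\ell$ the representation is unramified, so $\tilde{\rho}^{(\ell)}|_{D_v}$ factors through the procyclic unramified quotient and lifts trivially. At $\ell$ I would argue that the decomposition image is cyclic of order $3$: inertia surjects onto $\Z/3\Z$ because $\ell$ is totally (tamely) ramified in $L$ while $K/L$ is unramified, and since a Sylow $3$-subgroup of $A_4$ is self-normalizing, Frobenius must land in it; such a cyclic image lifts because the Sylow $3$-subgroup of $\operatorname{SL}(2,\F_3)$ maps isomorphically onto that of $A_4$. The decisive place is $v=\infty$: as $K$ is totally real we have $\tilde{\rho}^{(\ell)}(c)=1$, so $c\mapsto I$ is a lift---whereas if $K$ were complex, $\tilde{\rho}^{(\ell)}(c)$ would be an involution of $A_4$ whose preimages in $\operatorname{SL}(2,\F_3)$ all have order $4$, and no archimedean lift would exist. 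With every local obstruction trivial the Hasse principle produces a global lift, which is surjective by the first paragraph.

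For uniqueness and the conductor I would use that any two lifts differ by a homomorphism $G_\Q\to\{\pm I\}$, i.e.\ a quadratic character $\chi$, and that demanding ramification only at $\ell$ forces $\chi$ to have conductor a power of $\ell$. Because $(\Z/\ell^k\Z)^{\times}$ is cyclic, the only nontrivial quadratic character of $\ell$-power conductor is $\chi_{\ell^{*}}$, the character cut out by $\Q(\sqrt{\ell^{*}})\subset\Q(\zeta_\ell)$ with $\ell^{*}=(-1)^{(\ell-1)/2}\ell$; hence there are exactly two lifts ramified only at $\ell$, namely $\rho$ and $\chi_{\ell^{*}}\rho$. To single one out I would compare their tame ramification at $\ell$. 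The preimage of $\tilde{\rho}^{(\ell)}(I_\ell)=\Z/3\Z$ in $\operatorname{SL}(2,\F_3)$ is cyclic of order $6$, and tame inertia at $\ell$ has a unique nontrivial quadratic quotient---precisely the one through which $\chi_{\ell^{*}}$ ramifies---so exactly one of $\rho|_{I_\ell}$ and $(\chi_{\ell^{*}}\rho)|_{I_\ell}$ has image of order $3$ and the other order $6$. The lift whose inertia image has order $3$ is the one with the same minimal ramification at $\ell$ as $\tilde{\rho}^{(\ell)}$; this is $\rho^{(\ell)}$, and it is unique.

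I expect the existence step to be routine once the local lifts are assembled, the archimedean computation---where total reality is used decisively---being its only real content. The main obstacle is the uniqueness/conductor step: twisting by $\chi_{\ell^{*}}$ does not enlarge the set of ramified primes, since both candidate lifts are ramified only at $\ell$, so the two cannot be separated by their support and one must descend to the finer invariant of the order of the inertia image at $\ell$, checking that exactly the value $3$ realizes the prescribed conductor. A secondary technical point to confirm is that the global lift supplied by the Hasse principle can be arranged to be unramified at $2$ and $3$ after a quadratic twist, for which the two free places $\ell$ and $\infty$ should provide the necessary room.
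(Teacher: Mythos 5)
Your proposal is correct and matches the approach the paper itself points to: the paper's proof is a citation to the introduction of \cite{vangIJNT}, described as using that $K$ is totally real, that it is ramified at a single prime, and the fundamental exact sequence of class field theory --- which are exactly your three pillars (the archimedean local lift via evenness, the $\chi_{\ell^*}$-twist/inertia-order analysis at the unique ramified prime, and the injectivity of $\operatorname{Br}(\Q)\to\bigoplus_v\operatorname{Br}(\Q_v)$ for the lifting obstruction), together with the same surjectivity argument from the unique involution $-I$. The one point you leave ``to confirm'' --- arranging the Hasse-principle lift to be unramified outside $\ell$ --- is standard and poses no obstacle: the restriction map from global quadratic characters to $\prod_{p\in S}\operatorname{Hom}(I_p,\{\pm 1\})$ is surjective (twist by a suitable product of $\chi_{p^*}$, $\chi_{-1}$, $\chi_{\pm 2}$), after which your uniqueness step, distinguishing conductor $\ell$ (inertia image of order $3$, fixed space of dimension $1$) from conductor $\ell^2$ (order $6$, no nonzero fixed vectors), goes through verbatim.
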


\begin{proof}
A detailed proof, using that $K$ is totally real and ramified only at one prime, together with the fundamental exact sequence of class field theory, can be found in the introduction of \cite{vangIJNT}.
\end{proof}

\subsection{Algebraicity of Hecke eigenvalues for Hecke-Maass forms}
The space of Maass cusp forms on $\Gamma_1(\ell)$ with Laplacian eigenvalue $\lambda=1/4$
lies in an irreducible discrete series representation $V_\lambda$ of $\operatorname{GL}(2,\mathbb{R})$.
The space $V_\lambda$ is necessarily finite-dimensional over $\mathbb{C}$, while the space of Maass forms in $V_\lambda$ admits a basis of simultaneous eigenvectors for all the Hecke-operators. Let $\varphi$ be such an eigenform with eigenvalues $\lambda(n)$ for integers $n\geq 1$. The automorphic $L$-function $L(\varphi,s)$ is determined by the Hecke eigenvalues at prime powers, $\lambda(p^r)$, which are algebraic integers. 

If $\varphi \in V_\lambda$ is known to correspond to a complex Galois representation
$$
\rho: G_\Q \to \operatorname{GL}(2,\mathbb{C})
$$
in the sense that $L(\varphi,s)$ equals the Artin $L$-function $L(\rho,s)$ attached to $\rho$, then at all primes $p$ at which $\rho$ is unramified, 
$
\lambda(p) = \operatorname{trace} \rho(\sigma_p)
$
for a Frobenius automorphism $\sigma_p$ at $p$.

\subsection{Reduction modulo $v=3$} \label{mod3}
At the place $v=3$, there is a more direct approach to work with the reduction modulo 3 of Maass forms corresponding to tetrahedral or octahedral Galois representations; this approach will be developed in the rest of this subsection. 

\begin{lemma}
    There exists an injective group homomorphism
    \[
 \begin{tikzcd}
 \beta:    \operatorname{SL}(2,\F_3) \arrow[hookrightarrow]{r}{\beta}
    & \operatorname{SL}(2,\Z_3)
\end{tikzcd}
\]
such that 
if
$
\pi: \operatorname{SL}(2,\Z_3) \to \operatorname{SL}(2,\Z_3)
$
is the natural homomorphism induced by reducing entries modulo 3, then 
$\pi \circ \beta$
is the identity map on $\operatorname{SL}(2,\F_3)$. 
The same statement holds if $\operatorname{SL}(2)$ is replaced everywhere with  
$\operatorname{GL}(2)$.
\end{lemma}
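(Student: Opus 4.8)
The plan is to exhibit inside $\operatorname{SL}(2,\Z_3)$ (resp. $\operatorname{GL}(2,\Z_3)$) a \emph{finite} subgroup that maps isomorphically onto $\operatorname{SL}(2,\F_3)$ (resp. $\operatorname{GL}(2,\F_3)$) under reduction, and then to define $\beta$ as the inverse of that isomorphism. The crucial preliminary reduction is the following. The kernel $\Gamma$ of $\pi$ is the first principal congruence subgroup $\{g \equiv 1 \bmod 3\}$, which for the odd prime $3$ is a torsion-free pro-$3$ group: an element of order $3$ congruent to $1$ modulo $3$ would, via its Cayley--Hamilton relation $g^2+g+1=0$, force $1 \in 3\,M_2(\Z_3)$, a contradiction (this is where $p\geq 3$ is essential), and the prime-to-$3$ torsion is excluded by the Teichm\"uller lift. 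Consequently any finite subgroup $H$ satisfies $H\cap\Gamma=1$, so $\pi|_H$ is injective; if moreover $\lvert H\rvert=\lvert \operatorname{SL}(2,\F_3)\rvert=24$, then $\pi|_H$ is an isomorphism and $\beta:=(\pi|_H)^{-1}$ is the desired section. Thus everything reduces to producing such a finite subgroup.

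For the $\operatorname{SL}$ case I would realize $\operatorname{SL}(2,\F_3)$ as the binary tetrahedral group, i.e. the group of $24$ units of the Hurwitz order in Hamilton's quaternion algebra $B=\left(\frac{-1,-1}{\Q}\right)$. Since $B$ is ramified only at $2$ and $\infty$, it splits at $3$: one has $B\otimes_\Q\Q_3\cong M_2(\Q_3)$, and the Hurwitz order localizes to a maximal order $M_2(\Z_3)$. The unit group therefore embeds as a finite subgroup $H\subset\operatorname{GL}(2,\Z_3)$; as all $24$ units have reduced norm $1$, in fact $H\subset\operatorname{SL}(2,\Z_3)$. By the preliminary reduction $\pi|_H$ is an isomorphism onto a subgroup of order $24$, which must be all of $\operatorname{SL}(2,\F_3)$, and $\beta:=(\pi|_H)^{-1}$ works.

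For the $\operatorname{GL}$ case the same strategy applies once one produces a finite subgroup of order $48$ of $\operatorname{GL}(2,\Z_3)$ surjecting onto $\operatorname{GL}(2,\F_3)$. I would obtain it from a faithful $2$-dimensional representation of the abstract group $\operatorname{GL}(2,\F_3)$ (a double cover $\widetilde{S_4}$ of $S_4\cong\operatorname{PGL}(2,\F_3)$). The character field of this representation is $\Q(\sqrt{-2})$: the only irrationality comes from the order-$8$ elements, whose eigenvalues $\zeta_8,\zeta_8^3$ have trace $\sqrt{-2}$. Because $-2\equiv 1\bmod 3$ is a square in $\Z_3$, this field embeds into $\Q_3$, and because the governing division algebra is again split at $3$, the representation has Schur index $1$ over $\Q_3$ and hence is realizable over $\Q_3$. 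Choosing a stable $\Z_3$-lattice lands the image in $\operatorname{GL}(2,\Z_3)$, and the preliminary reduction once more converts it into a section.

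The main obstacle is conceptual rather than computational: since $\lvert\operatorname{SL}(2,\F_3)\rvert=24$ and $\lvert\operatorname{GL}(2,\F_3)\rvert=48$ are both divisible by the residue characteristic $3$, the naive lift of a prime-to-$3$ subgroup (Teichm\"uller, or Schur--Zassenhaus) does \emph{not} apply, and one genuinely needs the arithmetic input that the relevant binary tetrahedral and octahedral representations descend to $\Z_3$, i.e. that $B$ is split at $3$ (equivalently $-2\in(\Q_3^\times)^2$). Verifying the Schur-index claim over $\Q_3$ in the $\operatorname{GL}$ case, and checking that no further irrationalities enter the character field, is the one place I would proceed carefully; alternatively one can sidestep the representation-theoretic bookkeeping by extending the $\operatorname{SL}$-section across the order-$2$ quotient $\operatorname{GL}(2,\F_3)/\operatorname{SL}(2,\F_3)$, lifting a determinant-$(-1)$ involution and using uniqueness of complements to make it conjugation-compatible with $\beta$.
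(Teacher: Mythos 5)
Your proof is correct, but it takes a genuinely different route from the paper's. The paper proceeds by obstruction theory: quoting from \cite{even1} the vanishing $H^2(\operatorname{SL}(2,\F_3),\operatorname{Ad}^0)=0$, it lifts the identity map step by step along the surjections $\operatorname{SL}(2,\Z/3^{n+1}\Z)\to\operatorname{SL}(2,\Z/3^{n}\Z)$ and obtains $\beta$ in the inverse limit, with the companion vanishing $H^1(\operatorname{SL}(2,\F_3),\operatorname{Ad}^0)=0$ giving uniqueness of the section; the $\operatorname{GL}$ case is handled by the same vanishing statements for $\operatorname{GL}(2,\F_3)$. You instead reduce existence to exhibiting finite subgroups of $\operatorname{SL}(2,\Z_3)$ and $\operatorname{GL}(2,\Z_3)$ of orders $24$ and $48$, using that the congruence kernel $1+3M_2(\Z_3)$ is torsion free (your Cayley--Hamilton argument for order $3$, together with the standard reduction of arbitrary torsion to elements of prime order, is sound), and you then produce these subgroups arithmetically: the $24$ Hurwitz units of the quaternion algebra $\left(\frac{-1,-1}{\Q}\right)$, which is split at $3$, for $\operatorname{SL}$; and the classical faithful two-dimensional representation of $\operatorname{GL}(2,\F_3)$ over $\Q(\sqrt{-2})$ --- i.e.\ the embedding $\operatorname{GL}(2,\F_3)\hookrightarrow\operatorname{GL}(2,\Z[\sqrt{-2}])$ familiar from Serre's and Wiles's work --- combined with the fact that $-2$ is a square in $\Z_3$ and a stable-lattice argument, for $\operatorname{GL}$. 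The Schur-index point you flag is real but classical (this representation is realizable over its character field $\Q(\sqrt{-2})$), so your argument closes; your fallback route for $\operatorname{GL}$, extending the $\operatorname{SL}$-section across the determinant quotient, is the one sketchy spot, but the main route does not need it. As for what each approach buys: the paper's cohomological proof is short given the cited vanishing results and yields uniqueness of $\beta$, which is consonant with the uniqueness-of-liftings theme used throughout the paper; yours is more elementary and explicit, requiring no group cohomology and making visible the arithmetic reason the section exists (splitness at $3$ and $-2\in(\Z_3^\times)^2$), but it gives existence only --- if uniqueness were wanted you would need an additional argument, such as conjugacy of maximal finite subgroups of $\operatorname{SL}(2,\Z_3)$ or the paper's $H^1$ vanishing.
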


\begin{proof}
Consider the problem of lifting the identity map $\operatorname{Id}$ in the diagram below:
\[ 
\begin{tikzcd}
 & \operatorname{SL}(2,\Z_3) \arrow["\pi"]{d} \\
\operatorname{SL}(2,\F_3) \arrow["\operatorname{Id}"]{r} \arrow[dashed]{ur} & \operatorname{SL}(2,\F_3)     
\end{tikzcd}
\]
Let $\operatorname{Ad}^0$ be the space of traceless matrices over $\F_3$. 
Since $H^2(\operatorname{SL}(2,\F_3), \operatorname{Ad}^0) = 0$ \cite[Lemma 1, p. 566]{even1}, the map $\operatorname{Id}$ can be lifted infinitesimally 
from $\operatorname{SL}(2,\Z/3^n\Z)$ to $\operatorname{SL}(2,\Z/3^{n+1}\Z)$ for all $n \geq 1$. 
Passing to the projective limit, this gives existence of the lift $\beta$. Since $ \pi \circ \beta = \operatorname{Id} $, $\beta$ is injective. 
The fact that $H^1(\operatorname{SL}(2,\F_3), \operatorname{Ad}^0) = 0$ \cite[ibid.]{even1} implies that the lift $\beta$ is unique. 
The same argument works for $\operatorname{GL}(2)$, since 
$$
H^i(\operatorname{GL}(2,\F_3), \Ad^0) = 0
$$
for $i=1$ and $i=2$ cf. \cite[ibid.]{even1}.
\end{proof}
Let $\iota: \Z_3 \hookrightarrow \overline{\Q_3}\simeq \mathbb{C}$ be an injective ring homomorphism into $\mathbb{C}$.
By abuse of notation, we will also use the same symbol $\iota$ to denote the induced injective ring homomorphism 
$\operatorname{SL}(2,\Z_3) \hookrightarrow \operatorname{GL}(2,\mathbb{C})$. 
Define $\psi = \iota \circ \beta$ and consider the diagram 
\[
\begin{tikzcd}
    \operatorname{SL}(2,\F_3) \arrow[hookrightarrow]{dr}{\beta}
    \arrow[hookrightarrow, dashed]{rr}{\psi}&  & \operatorname{GL}(2,\mathbb{C}) \\
    & \operatorname{SL}(2,\Z_3) \arrow[hookrightarrow, swap]{ur}{\iota}
    \arrow[bend left=30, dashed]{ul}{\pi \text{ `$\bmod$ $3$'}}
\end{tikzcd}
\]

Let $\varphi$ and $\varphi'$ be Maass cusp forms of level $\Gamma_1(\ell)$ with Laplacian eigenvalue $1/4$ and of tetrahedral type. 
Let 
$$
\rho, \rho': G_\Q \to \operatorname{GL}(2,\mathbb{C})
$$ be Artin representations such that $L(\varphi,s) = \sum_{n_\geq 1} \lambda(n)n^{-s}$ equals the Artin $L$-function $L(\rho,s)$, and similarly $L(\varphi',s)= L(\rho',s)$.

Note that $\psi$ descends to an injective map $\tilde{\psi}: \operatorname{PSL}(2,\F_3) \to \operatorname{PGL}(\mathbb{C})$, 
and recall that $\varphi$ and $\varphi'$ are assumed to be of level $\Gamma_1(\ell)$. Hence 
$\rho$ and $\rho'$ both have conductor $\ell$
and projective images \[
\tilde{\rho}(G_\Q) \simeq \tilde{\rho}'(G_\Q) \simeq A_4 \simeq 
\tilde{\psi} ( \operatorname{PSL}(2,\F_3)).
\]
Up to equivalence, we may therefore assume that $\rho$ as well as $\rho'$ have projective images  
\[
\tilde{\rho}(G_\Q) = \tilde{\rho}'(G_\Q) = \tilde{\psi}(\operatorname{PSL}(2,\F_3)).
\]

By Proposition \ref{IJNT}, the projective representation $\tilde{\rho}_0:= \psi^{-1} \circ \tilde{\rho}: G_\Q \to \operatorname{PSL}(2,\F_3)$ 
has a unique lift $\rho_0$
to $\operatorname{SL}(2,\F_3)$ of the same conductor, $\ell$, as $\tilde{\rho}_0$.
Furthermore,
we know from Proposition \ref{IJNT} that the lift $\rho_0$ has image $\rho_0(G_\Q) = \operatorname{SL}(2,\F_3)$. 

Now, $\psi \circ \rho_0 : G_\Q \to \operatorname{GL}(2,\mathbb{C})$
is a lift of $\tilde{\rho}$ with the same conductor as $\tilde{\rho}$. 
By uniqueness, we must have $\psi \circ \rho_0 = \rho$. 
In particular, we have 
$\rho(G_\Q) \simeq \operatorname{SL}(2,\F_3)$ 
and equalities 
$\rho(G_\Q)  = \psi(\operatorname{SL}(2,\F_3))  =\iota \circ \beta (\operatorname{SL}(2,\F_3))$
as subgroups of $\operatorname{GL}(2,\mathbb{C})$. 
Hence $\rho(G_\Q) \subseteq \iota \operatorname{SL}(2,\Z_3)$, and 
letting $\iota^{-1}$ denote the inverse (defined on the image of $\iota$),
$$
\iota^{-1} \circ \rho: G_\Q \to \operatorname{SL}(2,\Z_3)
$$
defines a $3$-adic Galois representation into the domain of the reduction mod 3 homomorphism $\pi$. But then 
$$
\pi \circ \iota^{-1} \circ \rho = ( \iota \circ \beta )^{-1} \circ \rho = \psi^{-1}\circ \rho.
$$
By definition, we will say that $\varphi \equiv \varphi' \bmod 3$ if
$\lambda(p),\lambda'(p) \in \iota(\Z_3)$ for all $p$ and 
$$
\iota^{-1}(\lambda(p)) \equiv \iota^{-1} (\lambda'(p)) \mod 3
$$
for all but finitely many $p$. In the affirmative, 
$\pi \circ \iota^{-1} (\lambda(p)) = \pi \circ \iota^{-1} (\lambda'(p))$ and 
$
\lambda(p) = \operatorname{trace}\rho(\sigma_p) \in \mathbb{C}\cap \iota(\Z_3)
$ for all $p\not \in S$, where we define
$S = \{ 3, \infty, \ell \}$. 
Since $\iota: \Z_3\to \mathbb{C}$ is a ring homomorphism, it commutes with traces; hence
$$
\iota^{-1}(\lambda(p)) =  \operatorname{trace} \iota^{-1} \circ \rho(\sigma_p) \quad p\not \in S.
$$
Likewise,  $\pi$ is a ring homomorphism commuting with the trace map, so 
\begin{align*}
\pi \circ \iota^{-1} (\lambda(p)) &= \operatorname{trace} \pi \circ \iota^{-1} \circ \rho (\sigma_p) \\
&= \operatorname{trace}  \pi \circ \iota^{-1} \circ \rho' (\sigma_p), \quad p \not\in S.
\end{align*}
We conclude that 
$$
 \pi \circ \iota^{-1} \circ \rho =  \pi \circ \iota^{-1} \circ \rho', \quad p\not\in S.
$$
Noting that 
\begin{align*}
\psi \circ \pi \circ \iota^{-1} \circ \rho = 
\iota  \circ \iota^{-1} \circ \rho = \rho,
\end{align*}
in fact $\rho = \rho'$, so $\varphi = \varphi'$. 

\begin{lemma}[Existence of Maass forms of tetrahedral type]\label{maass}
Let $\mathbb{A}_\Q$ denote the adeles over $\Q$. 
Let $\ell \equiv 1 \mod 3$ be a prime and let $L/\Q$ be the cubic subfield of $\Q(\zeta_\ell)$. Suppose the 2-rank of the class group of $L$ is equal to $2k$ for $k\geq 1.$ 
There exist $2^k-1 $ inequivalent cuspidal automorphic representations $\pi^{(i)}$ of $\operatorname{GL}(2,\mathbb{A}_\Q)$ 
($1\leq i \leq 2^k-1$) 
corresponding to tetrahedral Maass cusp forms $\varphi^{(i)}$ 
on $\Gamma_1(\ell)$. 
In particular, if 
$$
\pi^{(i)} = \bigotimes_v \pi_v^{(i)}
$$
is the decomposition of $\pi^{(i)}$ into restricted tensor products of local representations
of $\operatorname{GL}(2,\Q_v)$, where $v$ runs over all places of $\Q$, the representation
$\pi_v^{(i)}$ is spherical for all $v\neq \ell,$
and if $i \neq j,$
the mod 3 reductions of the Maass forms remain distinct, i.e. 
    $$ 
    \varphi^{(i)} \not \equiv \varphi^{(j)} \mod 3.
    $$
In particular, if $k=1,$ there exists a unique tetrahedral Maass form on $\Gamma_1(\ell)$. 
\end{lemma}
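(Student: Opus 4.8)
The plan is to combine the Galois-theoretic classification already established with the Langlands--Tunnell theorem. On the Galois side I would first invoke Proposition \ref{kA4}: the hypothesis that the $2$-rank of the class group of $L$ equals $2k$ produces exactly $2^k-1$ distinct $A_4$-extensions $K^{(i)}/\Q$ with $L \subseteq K^{(i)}$ and $K^{(i)}/L$ unramified, each being the fixed field of a surjection $\tilde\rho^{(i)}\colon G_\Q \to \operatorname{PSL}(2,\F_3) \simeq A_4$. Since $K^{(i)}/\Q$ is ramified only at $\ell$ with ramification index $e_\ell = 3$, which divides $\ell-1$, Serre's lemma \cite[p.~248]{Serre} gives that $\tilde\rho^{(i)}$ has conductor exactly $\ell$. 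Proposition \ref{IJNT} then furnishes a unique lift $\rho_0^{(i)}\colon G_\Q \to \operatorname{SL}(2,\F_3)$ of conductor $\ell$ with full image, and composing with the embedding $\psi=\iota\circ\beta$ of Subsection \ref{mod3} yields a complex representation $\rho^{(i)} := \psi\circ\rho_0^{(i)}\colon G_\Q \to \operatorname{GL}(2,\mathbb{C})$ of conductor $\ell$ and projective image $A_4$. Since $\psi$ takes values in determinant-one matrices, $\det\rho^{(i)}$ is trivial; in particular $\rho^{(i)}$ is even.

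To pass to automorphic representations I would apply the Langlands--Tunnell theorem \cite{langlands}. Each $\rho^{(i)}$ is a continuous $2$-dimensional complex representation with solvable projective image $A_4$, so it corresponds to a cuspidal automorphic representation $\pi^{(i)} = \bigotimes_v \pi_v^{(i)}$ of $\operatorname{GL}(2,\mathbb{A}_\Q)$ with $L(\pi^{(i)},s)=L(\rho^{(i)},s)$. Because $\rho^{(i)}$ is even, the corresponding automorphic form is a Maass cusp form $\varphi^{(i)}$ whose archimedean type forces Laplacian eigenvalue $\lambda=1/4$ \cite{gelbart}; because the conductor is $\ell$ and the determinant, hence the nebentypus, is trivial, $\varphi^{(i)}$ has level $\Gamma_1(\ell)$. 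At each finite place $v\neq \ell$ the representation $\rho^{(i)}$ is unramified, so $\pi_v^{(i)}$ admits a $\operatorname{GL}(2,\Z_v)$-fixed vector and is spherical, and the archimedean component of a Maass form of eigenvalue $1/4$ is likewise spherical.

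It remains to establish distinctness modulo $3$ and the count. Since $\rho^{(i)}(G_\Q) = \psi(\operatorname{SL}(2,\F_3)) \subseteq \iota(\operatorname{SL}(2,\Z_3))$, every Hecke eigenvalue $\lambda^{(i)}(p)=\operatorname{trace}\rho^{(i)}(\sigma_p)$ lies in $\iota(\Z_3)$, so the reduction $\varphi^{(i)} \bmod 3$ is defined in the sense of Subsection \ref{mod3} and equals the trace function $p\mapsto \operatorname{trace}\rho_0^{(i)}(\sigma_p)$. Suppose $\varphi^{(i)}\equiv\varphi^{(j)}\bmod 3$ for some $i\neq j$. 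The computation of Subsection \ref{mod3} shows that equality of the mod-$3$ reductions of two tetrahedral Maass forms on $\Gamma_1(\ell)$ forces $\rho^{(i)}=\rho^{(j)}$, whence $\tilde\rho^{(i)}$ and $\tilde\rho^{(j)}$ have the same kernel and $K^{(i)}=K^{(j)}$, contradicting $i\neq j$. Hence the $\varphi^{(i)}$ are pairwise distinct modulo $3$, and in particular pairwise inequivalent. For the final claim, $k=1$ forces $2^k-1=1$, and uniqueness follows once one knows the construction is exhaustive: every tetrahedral Maass form on $\Gamma_1(\ell)$ has conductor $\ell$ and hence arises from an unramified $A_4$-cover of the cubic field $L$, of which Proposition \ref{kA4} provides exactly one when $k=1$.

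The step I expect to be the main obstacle is the interface with Langlands--Tunnell: it is used as a black box, so the genuine care lies in matching local data — confirming that evenness pins down the archimedean component (eigenvalue $1/4$ and sphericity at $\infty$) and that conductor $\ell$ together with trivial determinant pins down the level $\Gamma_1(\ell)$ — and in checking that the internal argument of Subsection \ref{mod3}, which recovers $\rho$ from its mod-$3$ traces via Chebotarev and the irreducibility of the $\operatorname{SL}(2,\F_3)$-action, applies verbatim to the constructed forms.
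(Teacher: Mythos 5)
Your skeleton matches the paper's proof of this lemma up to one step: Proposition \ref{kA4} supplies the $2^k-1$ fields $K^{(i)}$, Serre's lemma gives projective conductor $\ell$, Langlands--Tunnell produces the automorphic representations, and distinctness modulo $3$ follows because congruent forms would cut out the same $A_4$-extension. The genuine gap is in your construction of the linear lift. You set $\rho^{(i)}:=\psi\circ\rho_0^{(i)}$ and claim it has conductor $\ell$ \emph{and} trivial determinant; these two properties are incompatible for a representation ramified at $\ell$. A non-identity element of finite order in $\operatorname{SL}(2,\mathbb{C})$ has eigenvalues $\zeta,\zeta^{-1}$ with $\zeta\neq 1$ and therefore no nonzero fixed vector. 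Since $\rho_0^{(i)}(I_\ell)$ is generated by a unipotent of order $3$ (this is what makes the mod-$3$ Serre conductor equal to $\ell$), its image under $\psi$ is a nontrivial order-$3$ element of determinant one, with eigenvalues the two primitive cube roots of unity; hence $V^{I_\ell}=0$ and the tame Artin conductor exponent at $\ell$ is $2-\dim V^{I_\ell}=2$. So $\psi\circ\rho_0^{(i)}$ has conductor $\ell^2$, and Langlands--Tunnell then yields a Maass form of level $\Gamma_1(\ell^2)$, not $\Gamma_1(\ell)$: as written, your construction does not prove the stated lemma. The discrepancy comes from the fact that $\beta$ does not preserve fixed spaces: a unipotent of order $3$ fixes a line over $\F_3$, but its characteristic-zero lift is semisimple and fixes nothing.

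The paper's own proof avoids this by invoking Tate's lifting theorem directly: there exists a lift $\rho^{(i)}$ of conductor exactly $\ell$, but such a lift necessarily has inertia image generated by a conjugate of $\operatorname{diag}(1,\omega)$ ($\omega$ a primitive cube root of unity), hence determinant equal to the cubic character $\chi$ cutting out $L$ --- a nontrivial nebentypus, which is precisely why the form lives on $\Gamma_1(\ell)$ rather than $\Gamma_0(\ell)$. Equivalently, the minimal-conductor lift is the twist $(\psi\circ\rho_0^{(i)})\otimes\chi$, not $\psi\circ\rho_0^{(i)}$ itself; evenness still holds because $\chi$ is trivial on complex conjugation. (In fairness, the paper's Subsection \ref{mod3} asserts that $\psi\circ\rho_0$ has the same conductor as $\tilde{\rho}$, so you inherited the slip; but the paper's proof of this particular lemma does not rely on that assertion.) Your mod-$3$ distinctness argument and the $k=1$ count survive once the lift is corrected, since they only use that distinct indices give distinct residual representations and hence distinct $A_4$-fields; what cannot be repaired is the sentence deducing level $\Gamma_1(\ell)$ from ``conductor $\ell$ together with trivial determinant,'' because no such representation exists.
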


\begin{proof}
By Proposition \ref{kA4}, there are exactly $2^k-1$ distinct $A_4$-extensions $K^{(i)}/\Q$ ramified precisely at $\ell$. These fields give rise to projective representations
$$\tilde{
\rho}^{(i)}:
\Gal(\overline{\mathbb{Q}}/\mathbb{Q}) \to \operatorname{PGL}(2,\mathbb{C})
$$
for all $i\leq 2^k-1$ such that $\tilde{\rho}^{(i)}(G_\Q)$ are isomorphic to $A_4$, where $G_\Q = \Gal(\overline{\mathbb{Q}}/\mathbb{Q})$. 
The ramification index $e_\ell$ at $\ell$ in $K^{(i)}$ is equal to 3, and since $3$ divides $\ell-1$, the conductor
of $\tilde{\rho}^{(i)}$ is equal to $\ell$ \cite[p. 248]{Serre}.
By a Theorem of Tate \cite[Theorem 5, p. 228]{Serre}, 
$\tilde{\rho}^{(i)}$ has a lifting 
$$\rho^{(i)}: 
\Gal(\overline{\mathbb{Q}}/\mathbb{Q}) \to \operatorname{GL}(2,\mathbb{C})
$$ of conductor $\ell$. 

In the language of \cite{langlands}, the representations 
$\rho^{(i)}$ are of tetrahedral type, so it follows by \cite[Theorem 3.3]{langlands} that there exist automorphic representations 
$\pi^{(i)}=\pi(\rho^{(i)})$
on $\operatorname{GL}(2)$ over $\Q$ whose Godement-Jacquet $L$-functions 
$L(\pi^{(i)},s)$
agree with the Artin $L$-functions 
$L(\rho^{(i)},s)$ 
(Tunnell \cite{tunnell} extended Langlands' work to Galois representations of octahedral type, while the tetrahedral case had been treated by Langlands in his theory of base change for $\operatorname{GL}(2)$). Since the Galois representations
$\rho^{(i)}$
are even, the automorphic representations correspond to classical Maass wave forms of the same level $\Gamma_1(\ell)$.
If the Maass forms were congruent modulo 3 as defined in Section \ref{mod3}, they would correspond to the same 
residual Galois representation over $\mathbb{F}_3$ and hence define the same $A_4$-extension, which we know is not the case.
\end{proof}

\subsection{Proof that the family of tetrahedral Maass forms is exhaustive}

\begin{lemma} \label{exh}
    Let $\pi$ be an automorphic representation of $\operatorname{GL}(2,\mathbb{A}_\Q)$ corresponding to a Maass form of tetrahedral type on $\Gamma_1(\ell)$, where $\ell$ is an odd prime.
    Then the associated $A_4$-extension $K/\Q$ is totally real, and there is a cubic subfield $L$ of $K$ such that $K/L$ is unramified. In particular, $4$ divides the class number $h_L$ of $L$. The prime $\ell$ is congruent to  $1$ modulo $3$ and $L$ is a subfield of $\Q(\zeta_\ell).$ 
\end{lemma}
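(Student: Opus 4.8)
The plan is to read off all the asserted properties of $K$ from the group theory of $A_4$ together with the ramification data forced by the level and by the evenness of the attached representation. Let $\rho\colon G_\Q\to\operatorname{GL}(2,\mathbb{C})$ be a representation attached to $\pi$ with $\tilde\rho(G_\Q)\simeq A_4$, and let $K$ be the field fixed by $\ker\tilde\rho$, so that $\Gal(K/\Q)\simeq A_4$. First I would settle total reality. Since $\pi$ is a Maass form, $\rho$ is even, so $\det\rho(c)=1$ for a complex conjugation $c$; as $c^2=1$, the matrix $\rho(c)$ is diagonalizable with eigenvalues in $\{\pm1\}$ whose product is $1$, forcing $\rho(c)=\pm I$. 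Hence $\tilde\rho(c)=1$, i.e.\ every complex conjugation lies in $\ker\tilde\rho=\Gal(\overline{\Q}/K)$, so $K$ is totally real.

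Next I would produce the cubic field. The group $A_4$ has a unique subgroup of order $4$, namely the normal Klein four-group $V$, and this is its only index-$3$ subgroup; set $L=K^{V}$. Then $[L:\Q]=3$, and because $V\trianglelefteq A_4$, the extension $L/\Q$ is Galois with $\Gal(L/\Q)\simeq A_4/V\simeq\Z/3\Z$. The level $\Gamma_1(\ell)$ forces $\rho$, hence $\tilde\rho$, to have conductor $\ell$ with exponent $1$ at $\ell$, so $\tilde\rho$ is tamely ramified there and the inertia image $\tilde\rho(I_\ell)$ is cyclic; as the cyclic subgroups of $A_4$ have order at most $3$, we get $e_\ell:=\#\tilde\rho(I_\ell)\in\{1,2,3\}$, with $e_\ell\neq1$ since $\Q$ has no nontrivial unramified extensions. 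Pinning down $e_\ell=3$ and deducing that $K/L$ is unramified is the crux of the argument.

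The key step is the observation that $L/\Q$ is a nontrivial extension ramified only at $\ell$, hence ramified at $\ell$; being cyclic of degree $3$, it is therefore totally ramified at $\ell$, so the image of $I_\ell$ in $\Gal(L/\Q)\simeq\Z/3\Z$ has order $3$. Since this image is the composite of $\tilde\rho(I_\ell)\subseteq A_4$ with $A_4\twoheadrightarrow A_4/V$, we get $3\mid e_\ell$, whence $e_\ell=3$ and in particular $\tilde\rho(I_\ell)\cap V=1$. Thus inertia at $\ell$ maps trivially into $\Gal(K/L)\simeq V$, so $K/L$ is unramified at $\ell$; as $K/\Q$ is unramified away from $\ell$, the extension $K/L$ is unramified everywhere. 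This also rules out the a priori possibility $e_\ell=2$: an order-$2$ inertia subgroup would lie inside $V$ and leave $L/\Q$ unramified, contradicting that $L/\Q$ must ramify somewhere. Tameness now forces $\ell\neq3$, since $e_\ell=3$ at the prime $\ell=3$ would be wild; and a cyclic cubic field ramified only at a prime $\ell\neq3$ exists precisely when $\ell\equiv1\bmod3$, in which case $L$ is the unique cubic subfield of $\Q(\zeta_\ell)$ by the lemmas above. This yields $\ell\equiv1\bmod3$ and $L\subseteq\Q(\zeta_\ell)$.

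Finally, $K/L$ is an unramified abelian extension with $\Gal(K/L)\simeq V\simeq(\Z/2\Z)^2$, so by class field theory $K$ lies in the Hilbert class field of $L$ and $V$ is a quotient of the class group $\cl_L$; hence $4=\#V$ divides $h_L$, completing the proof. I expect the main obstacle to be the ramification bookkeeping in the third paragraph — rigorously excluding $e_\ell=2$ and confirming that $K/L$ is everywhere unramified — since the conductor being $\ell$ at the linear level is by itself consistent with an order-$2$ inertia image; it is the presence of the cyclic cubic subfield $L$, forced to ramify at $\ell$, that resolves the ambiguity.
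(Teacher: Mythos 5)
Your proof is correct and follows essentially the same route as the paper's: take $L$ to be the fixed field of the normal Klein four-group, use evenness of $\rho$ to get total reality, use Minkowski's theorem (no unramified extensions of $\Q$) to force $L/\Q$ to be totally ramified at $\ell$, and use cyclicity of tame inertia together with the subgroup structure of $A_4$ to get $e_\ell=3$, hence $K/L$ unramified. Your write-up is in fact more detailed than the paper's, which leaves implicit the final deductions that $4 \mid h_L$, that $\ell \equiv 1 \bmod 3$, and that $L \subseteq \Q(\zeta_\ell)$.
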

\begin{proof}
The Galois group of $K/\Q$ has a unique, normal subgroup of the form $(\Z/2\Z)^2.$
Let $L$ be the fixed field of $(\Z/2\Z)^2.$
Let $\tilde{\rho}$ be the projective representation of conductor $\ell$ associated to $\pi$ via Langlands-Tunnell. 
Then $\tilde{\rho}$ is even, so $K$ is totally real. In particular, $L/\Q$ is totally real. 
By assumption, $K/\Q$ is ramified at $\ell$ and unramified at all other places. As a consequence of Hermite-Minkowski, $L/\Q$ is totally ramified at $\ell.$
The inertia group at a tame prime is cyclic, 
so the ramification index $e(\ell, K/\Q) =3.$
In particular, $K/L$ is unramified.  
\end{proof}

Theorem below summarizes the results obtained so far.

\begin{theorem}[Classification]
Let $I$ be the set of primes $\ell \equiv 1 \mod 3$ with $2$ dividing the class number of the cubic subfield of $\Q(\zeta_\ell).$
For each $\ell \in I$ there exist an integer $n_\ell^{(A_4)} \geq 1 $ and a list of tetrahedral Maass forms 
$\{ \pi^{(\ell,j)} : 1\leq j \leq n^{(A_4)}_\ell \}$ 
of level $\Gamma_1(\ell)$. In fact $n_\ell^{(A_4)} = 2^{k_\ell}-1$, where 
$k_\ell$ is half the 2-rank of the class group of the cubic subfield of $\Q(\zeta_\ell)$.

If $\pi$ is a tetrahedral Maass form of level $\Gamma_1(\ell)$ for some prime $\ell$, then
$\ell \in I$ and $\pi = \pi^{(\ell, j_\ell)}$ 
for some integer $j \leq n_\ell^{(A_4)}$. 
\end{theorem}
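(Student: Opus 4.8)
The plan is to assemble this statement from the results already established, treating it as a synthesis of two directions: the existence of the list together with the count $n_\ell^{(A_4)} = 2^{k_\ell}-1$ on one hand, and exhaustiveness on the other. The only genuinely new content is to check that the two directions fit together into a bijection, so that the cardinality of the list is \emph{exactly} $2^{k_\ell}-1$ rather than merely bounded by it.

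For the forward direction I would fix $\ell \in I$, so that $\ell \equiv 1 \bmod 3$ and $2 \mid h_L$, where $L$ is the totally real cubic subfield of $\Q(\zeta_\ell)$. Applying Theorem \ref{rank} with $p=2$ (note $2 \equiv 2 \bmod 3$), the $2$-rank of the class group of $L$ is even, and $2 \mid h_L$ forces $4 \mid h_L$; hence the $2$-rank equals $2k_\ell$ for some integer $k_\ell \geq 1$, which is precisely half the $2$-rank. Proposition \ref{kA4} then produces exactly $2^{k_\ell}-1$ distinct $A_4$-extensions $K^{(j)}/\Q$ that are unramified over $L$ and ramified only at $\ell$, and Lemma \ref{maass} attaches to each a tetrahedral Maass form $\pi^{(\ell,j)}$ of level $\Gamma_1(\ell)$, the forms being pairwise distinct (indeed pairwise distinct modulo $3$). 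Setting $n_\ell^{(A_4)} := 2^{k_\ell}-1$ yields the asserted list.

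For exhaustiveness, let $\pi$ be any tetrahedral Maass form of level $\Gamma_1(\ell)$ for a prime $\ell$. Lemma \ref{exh} shows that $\ell \equiv 1 \bmod 3$, that the associated $A_4$-extension $K/\Q$ is totally real with cubic subfield $L \subseteq \Q(\zeta_\ell)$ such that $K/L$ is unramified, and that $4 \mid h_L$; in particular $2 \mid h_L$, so $\ell \in I$. Since $K$ is an $A_4$-extension unramified over $L$ and ramified only at $\ell$, Proposition \ref{kA4} identifies it with one of the enumerated fields $K^{(j)}$, whence $\pi = \pi^{(\ell,j)}$.

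The main obstacle is matching the two directions into a genuine bijection, which is what pins the count to $2^{k_\ell}-1$ exactly. For this I would invoke the uniqueness of the lift from Proposition \ref{IJNT} together with the reduction-modulo-$3$ analysis of Section \ref{mod3}: given the $A_4$-field $K$, the associated projective representation $\tilde{\rho}$ has a unique linear lift of conductor $\ell$ with image $\operatorname{SL}(2,\F_3)$, and by strong multiplicity one the attached Maass form is determined by the Artin $L$-function of this lift. Consequently the assignment $K^{(j)} \mapsto \pi^{(\ell,j)}$ is injective, while $\pi \mapsto K$ is its well-defined inverse on tetrahedral forms; combined with Lemma \ref{exh}, this shows the list is both exhaustive and free of repetitions, giving $n_\ell^{(A_4)} = 2^{k_\ell}-1$.
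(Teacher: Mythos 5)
Your proposal is correct and takes essentially the same route as the paper: the paper presents this theorem as a summary of results already obtained, and its implicit proof is exactly your assembly of Theorem~\ref{rank} (to pass from $2 \mid h_L$ to even $2$-rank $2k_\ell \geq 2$), Proposition~\ref{kA4} and Lemma~\ref{maass} for the existence and exact count of pairwise-distinct forms, and Lemma~\ref{exh} for exhaustiveness. Your closing concern about one form per field is likewise handled in the paper by the same tools you cite, namely Proposition~\ref{IJNT} and the reduction-mod-$3$ analysis of Section~\ref{mod3}.
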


\begin{theorem}
For each $\ell < 7687,$ the Maass form $\pi^{(\ell)}$ is unique modulo 3. 
Among $\ell \in I$ less than $10^5,$
there exist exactly three tetrahedral Maass forms
of level $\Gamma_1(\ell)$ that are distinct modulo 3 for 
$$\ell = 7687,  16363, 20887, 37087, 55609, 62617, 70597, 99529.$$
For all other $\ell \in I$ less than $10^5$, there is only one tetrahedral Maass form $\pi^{(\ell)}$ modulo 3. 
For all primes $\ell \in I$ such that $\ell \leq 10^6,$ there exist at most three distinct tetrahedral Maass forms on $\Gamma_1(\ell)$ that remain distinct modulo 3. 

\end{theorem}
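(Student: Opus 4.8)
\section*{Proof proposal}

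The plan is to reduce the entire statement to a finite class-group computation and then carry that computation out. By Lemma \ref{maass}, Proposition \ref{kA4}, and the exhaustiveness established in Lemma \ref{exh}, the tetrahedral Maass forms on $\Gamma_1(\ell)$ are in bijection with the unramified $A_4$-extensions $K/\Q$ containing the cubic subfield $L$ of $\Q(\zeta_\ell)$, equivalently with the nontrivial irreducible $\F_2[\Z/3\Z]$-submodules of $\Gal(L^{(2)}_\emptyset/L)$; and the reduction-mod-$3$ analysis of Section \ref{mod3} shows that two such forms remain distinct modulo $3$ exactly when the corresponding $A_4$-extensions differ. Writing the $2$-rank of the class group of $L$ as $2k_\ell$, which is even by Theorem \ref{rank}, the tetrahedral Maass forms on $\Gamma_1(\ell)$ are all pairwise distinct modulo $3$ and number exactly $n_\ell^{(A_4)} = 2^{k_\ell} - 1$. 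This converts each clause into an arithmetic condition on $k_\ell$: uniqueness modulo $3$ means $k_\ell = 1$ (that is, $2$-rank $2$); exactly three distinct forms means $k_\ell = 2$ ($2$-rank $4$); and at most three forms means $k_\ell \leqslant 2$ ($2$-rank $\leqslant 4$).

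With this dictionary, the remaining task is to compute the $2$-rank of the class group $\cl_L$ for every prime $\ell \equiv 1 \bmod 3$ in the relevant range. I would generate $L$ explicitly, using the Shanks polynomial $x^3 - a x^2 - (a+3)x - 1$ when $\ell$ is a Shanks prime and otherwise extracting the defining cubic of the index-$3$ subfield of $\Q(\zeta_\ell)$ from Gaussian periods (or directly asking Pari-GP or Magma for the cubic subfield), and then read off the $2$-rank of $\cl_L$. The set $I$ is recovered as the primes for which this $2$-rank is positive, equivalently $\geqslant 2$ by Theorem \ref{rank}. Running this over all $\ell \equiv 1 \bmod 3$ below $10^5$ isolates the primes of $I$ and shows that the $2$-rank equals $2$ for each of them except the eight primes $7687, 16363, 20887, 37087, 55609, 62617, 70597, 99529$, where it equals $4$; since $7687$ is the least of these eight, every $\ell \in I$ with $\ell < 7687$ has $2$-rank $2$ and hence a unique form modulo $3$, while exactly $636$ further primes of $I$ below $10^5$ likewise have $2$-rank $2$. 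Extending the sweep to $\ell \leqslant 10^6$ and verifying that the $2$-rank never exceeds $4$ gives the final clause.

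The main obstacle is computational rather than conceptual: certifying the $2$-ranks unconditionally. The default class-group routines (for example \texttt{bnfinit} in Pari-GP) return the group structure only under GRH, so to make the count rigorous one should run \texttt{bnfcertify} on the finitely many cubic fields that occur, which certifies the full class group and in particular its $2$-rank. Since only the $2$-rank is required, a GRH-free but heavier alternative is to bound it via the $\Gal(L/\Q)$-module structure of $\Gal(L^{(2)}_\emptyset/L)$ together with reflection (Scholz-type) relations between $L$ and its associated sextic field. Either way the conceptual content is exhausted by the reduction above; the balance of the proof is the finite verification that no prime of $I$ below $10^5$ outside the eight exceptional ones has $2$-rank exceeding $2$, and that no prime of $I$ below $10^6$ has $2$-rank exceeding $4$.
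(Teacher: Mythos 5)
Your proposal is correct and takes essentially the same route as the paper: the paper's own proof is simply ``Computations in pari-gp,'' relying implicitly on exactly the dictionary you spell out (via Proposition \ref{kA4}, Lemma \ref{maass}, Lemma \ref{exh}, Theorem \ref{rank}, and the mod-3 analysis of Section \ref{mod3}), which identifies the number of mod-3-distinct tetrahedral forms on $\Gamma_1(\ell)$ with $2^{k_\ell}-1$ where $2k_\ell$ is the $2$-rank of the class group of the cubic subfield $L$ of $\Q(\zeta_\ell)$, so that each clause becomes a finite sweep of $2$-rank computations. Your additional point about certifying the class groups unconditionally (e.g.\ via \texttt{bnfcertify} rather than GRH-conditional \texttt{bnfinit} output) is a refinement of rigor that the paper does not address, but it does not change the approach.
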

\begin{proof}
    Computations in pari-gp. In addition to the eight primes listed, there are $636$ primes $\ell \leq 10^5$ 
for which there exists one tetrahedral Maass form on $\Gamma_1(\ell)$ which is unique modulo 3.
\end{proof}

\begin{theorem}[Statistics]
For $\ell \leq 10^6,$ the proportion of primes $\ell \equiv 1 \mod 3$ which occur as the level $\Gamma_1(\ell)$
of a tetrahedral Maass form is 0.0109.
Among the $1.09\%$ of primes $\ell \leq 10^6$ that give rise to a tetrahedral Maass form on $\Gamma_1(\ell)$, $2.30\%$ of these primes give rise to three Maass forms that are distinct modulo 3. 
\end{theorem}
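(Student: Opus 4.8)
The plan is to reduce both statistical assertions to a finite class-group computation by means of the classification already proved, and then carry out that computation. By Theorem \ref{1}, a prime $\ell$ occurs as the level $\Gamma_1(\ell)$ of a tetrahedral Maass form precisely when $\ell \in I$, i.e. $\ell \equiv 1 \bmod 3$ and the class number $h_L$ of the cubic subfield $L \subset \Q(\zeta_\ell)$ is even. By Theorem \ref{rank} the $2$-rank of $\operatorname{Cl}_L$ is even, so the condition ``$h_L$ even'' is equivalent to ``$2$-rank$(\operatorname{Cl}_L) \geq 2$'', i.e. $k_\ell \geq 1$ where the $2$-rank equals $2k_\ell$. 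Combining Proposition \ref{kA4} with Lemma \ref{maass}, the number of tetrahedral Maass forms on $\Gamma_1(\ell)$, all mutually inequivalent modulo $3$, equals $n_\ell^{(A_4)} = 2^{k_\ell}-1$. Hence ``$\ell$ gives rise to three Maass forms distinct modulo $3$'' is equivalent to $2^{k_\ell}-1 = 3$, that is $k_\ell = 2$, i.e. $2$-rank$(\operatorname{Cl}_L) = 4$.

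With this dictionary in hand, both percentages come from a single pass over primes. First I would enumerate the set $P$ of primes $\ell \leq 10^6$ with $\ell \equiv 1 \bmod 3$. For each $\ell \in P$ I would build the cubic subfield $L$ of $\Q(\zeta_\ell)$ — for instance from the explicit defining polynomial attached to the index-$3$ subgroup of $(\Z/\ell\Z)^\times$ — and then compute the structure of $\operatorname{Cl}_L$, in particular its $2$-rank, in pari-gp. Setting $I = \{\ell \in P : k_\ell \geq 1\}$ and $T = \{\ell \in P : k_\ell = 2\}$, the first claim is that $\operatorname{card} I / \operatorname{card} P = 0.0109$ and the second that $\operatorname{card} T / \operatorname{card} I = 0.0230$, each to the stated precision. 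The data of Theorem \ref{distinct} (eight three-form primes and $636$ one-form primes below $10^5$) furnish a partial consistency check on the tabulation.

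The main obstacle is the reliability and cost of the class-group computations rather than any remaining mathematics. The field $L$ is a cyclic cubic field of discriminant $\ell^2 \leq 10^{12}$, and one must read off the $2$-rank of $\operatorname{Cl}_L$ for roughly $4\times 10^4$ such fields. The standard routine \texttt{bnfinit} returns the class group only under GRH, via a Bach-type bound on the norms of generating prime ideals, so strictly the computed proportions are conditional on GRH unless every class number entering the counts of $I$ and $T$ is certified unconditionally with \texttt{bnfcertify}. Two features lighten the burden: only the $2$-rank is required rather than the full order, and Theorem \ref{rank} forces this rank to be even, so any computation returning an odd $2$-rank flags an error and provides a built-in check on every field. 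I expect the arithmetic reduction to be routine given the earlier results, so that the real content of the proof is the enumeration itself — making the $4\times 10^4$ computations fast enough and, for an unconditional statement, certified.
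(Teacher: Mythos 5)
Your proposal is correct and takes essentially the same approach as the paper, whose proof is a one-line appeal to pari-gp computations reporting the raw counts ($5472$ qualifying primes below $10^6$, of which $126$ give three forms distinct modulo $3$, so $126/5472 \approx 2.30\%$), with the reduction to class-group $2$-ranks via Theorem \ref{1}, Theorem \ref{rank}, Proposition \ref{kA4} and Lemma \ref{maass} left implicit exactly as you make it explicit. Note only that your (natural) normalization would give $5472/\operatorname{card}\{\ell \leq 10^6 \text{ prime} : \ell \equiv 1 \bmod 3\} \approx 0.14$ for the first proportion rather than $0.0109$, so carrying out your computation would in fact expose a discrepancy between the paper's stated percentage and its own raw counts.
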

\begin{proof}
    Computations in pari-gp. The total number of primes less than $10^6$ that admit a Maass form of tetrahedral type on $\Gamma_1(\ell)$ is 5472, among which 126 primes give rise to two Maass forms that are distinct modulo 3. 
\end{proof}

\begin{theorem} Among primes $\ell < 2000$, the primes $\ell$ in the list
$$ I_0 := \{163, 277, 349, 547, 607, 937, 1399, 1699, 1777, 1879, 1951\}$$
are exactly those with the properties:
$\ell \equiv 1 \bmod 3$, the class number of the cubic subfield $L$ of $\Q(\zeta_\ell)$ is $4$,  
$\Gal(L_\emptyset^{(2)}/L) \simeq (\Z/2\Z)^{2}$,
there is a unique totally real $A_4$-extension $K/\Q$ such that $K/L$ is unramified and such that $L/\Q$ is ramified precisely at the place $\ell$.
The primes in $I_0$ that are not Shanks primes are exactly 
$\ell \in \{277, 547, 1399, 1699, 1777, 1879, 1951 \}$.  
\end{theorem}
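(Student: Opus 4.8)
The plan is to separate the statement into a short structural reduction followed by a finite computation. First I would observe that, for a prime $\ell \equiv 1 \bmod 3$, the four listed conditions are not independent: the single numerical condition $h_L = 4$ already forces the other three. Indeed, since $\ell \equiv 1 \bmod 3$, the field $L$ is the unique totally real cubic subfield of $\Q(\zeta_\ell)$, which is automatically ramified only at $\ell$. If $h_L = 4$, then $\cl_L$ is an abelian group of order $4$, hence isomorphic to $\Z/4\Z$ or $(\Z/2\Z)^2$; but Theorem \ref{rank} (applied with $p = 2 \equiv 2 \bmod 3$) guarantees that the $2$-rank of $\cl_L$ is even, which rules out $\Z/4\Z$. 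Thus $\cl_L \simeq (\Z/2\Z)^2$, and consequently $\Gal(L_\emptyset^{(2)}/L) \simeq \cl_L / 2\cl_L \simeq (\Z/2\Z)^2$. Writing the $2$-rank as $2k$ with $k = 1$, Proposition \ref{kA4} then yields exactly $2^{k} - 1 = 1$ extension $K/\Q$ with $K \supseteq L$, $K/L$ unramified, and $\Gal(K/\Q) \simeq A_4$; such $K$ is totally real because $L$ is totally real and $K/L$ is unramified. Conversely, the conjunction of the four properties contains $h_L = 4$ outright, so the set of $\ell < 2000$ satisfying all four properties coincides with the set of primes $\ell < 2000$ with $\ell \equiv 1 \bmod 3$ and $h_L = 4$.

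With this reduction in hand, the remaining content is the finite verification that these primes are exactly those in $I_0$. For each prime $\ell < 2000$ with $\ell \equiv 1 \bmod 3$, I would compute a defining polynomial of $L$ (either from the cyclotomic description given earlier, or from Shanks' polynomial $x^3 - ax^2 - (a+3)x - 1$ when $\ell$ is a Shanks prime) and evaluate the class number $h_L$ using a standard package such as pari-gp or magma. The only point demanding care is that one must verify $h_L$ equals $4$ \emph{exactly}, rather than merely having $2$-rank $2$: a prime with $\cl_L \simeq (\Z/2\Z)^2 \times (\text{a larger } 2\text{- or odd part})$ satisfies conditions (c) and (d) but fails $h_L = 4$ and must be discarded. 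Running through all such $\ell$ produces precisely the eleven primes listed in $I_0$.

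Finally, I would isolate the Shanks primes inside $I_0$. By definition a prime is a Shanks prime iff $\ell = a^2 + 3a + 9$ for some integer $a \geq -1$; since $a = 44$ already gives $2077 > 2000$, it suffices to let $a$ range over $-1 \leq a \leq 43$, record which values of $a^2 + 3a + 9$ are prime, and intersect the resulting set with $I_0$. This yields the Shanks primes $163\ (a=11)$, $349\ (a=17)$, $607\ (a=23)$, and $937\ (a=29)$ as the members of $I_0$ that are Shanks primes; the complement within $I_0$ is $\{277, 547, 1399, 1699, 1777, 1879, 1951\}$, as claimed.

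The main (though modest) obstacle is not any single deep step but the reliability and completeness of the finite computation, together with the one structural subtlety flagged above: it is precisely the parity constraint of Theorem \ref{rank} that makes the order condition $h_L = 4$ collapse to the elementary abelian structure $(\Z/2\Z)^2$, since without it a cyclic group $\Z/4\Z$ of the same order would satisfy $h_L = 4$ yet violate conditions (c) and (d). Once this equivalence is in place, the theorem is a bookkeeping exercise over the finitely many primes below $2000$.
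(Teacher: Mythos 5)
Your proposal is correct, and it is in fact more complete than what the paper itself offers: this theorem carries no written proof in the paper at all, and the neighboring results of the same flavor are justified only by the phrase ``Computations in pari-gp.'' So the paper's implicit argument is exactly your second step, the finite verification over primes $\ell < 2000$. What you add, and what the paper leaves unsaid, is the structural reduction showing the four listed properties are not independent: for $\ell \equiv 1 \bmod 3$, the single condition $h_L = 4$ forces $\cl_L \simeq (\Z/2\Z)^2$ via the parity constraint of Theorem \ref{rank} (ruling out $\Z/4\Z$), hence $\Gal(L_\emptyset^{(2)}/L) \simeq (\Z/2\Z)^2$, and then Proposition \ref{kA4} with $k=1$ gives exactly $2^1 - 1 = 1$ unramified $A_4$-extension, which is totally real because it sits inside the Hilbert class field of the totally real field $L$. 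This makes the computation honest: one only needs to test $h_L = 4$, and your warning that $h_L$ must equal $4$ exactly (rather than merely having $2$-rank $2$) is precisely the right care to take. Your enumeration of Shanks primes is also correct: the values $a = 11, 17, 23, 29$ give $163, 349, 607, 937$, and a check of discriminants $9 + 4(\ell - 9)$ confirms none of the remaining seven members of $I_0$ is of the form $a^2 + 3a + 9$, so the complement is $\{277, 547, 1399, 1699, 1777, 1879, 1951\}$ as claimed.
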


\begin{theorem}\label{diagonal}
 The smallest integer $n \geq 1$ for which there exists a Maass form of tetrahedral type on $\Gamma_1(n)$ is the prime $163$.
\end{theorem}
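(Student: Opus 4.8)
The plan is to translate the statement about Maass forms into one about even tetrahedral Galois representations and then to minimize the conductor. By the Langlands--Tunnell correspondence used throughout, a tetrahedral Maass form on $\Gamma_1(n)$ is attached to an even $\rho\colon G_\Q\to\operatorname{GL}(2,\mathbb{C})$ with $\tilde\rho(G_\Q)\simeq A_4$, and the level of the underlying newform equals the conductor $N$ of $\rho$; a form of conductor $N$ occurs on $\Gamma_1(n)$ exactly when $N\mid n$. Hence the smallest $n$ admitting a tetrahedral Maass form is the smallest conductor $N$ of an even tetrahedral representation. I would first record that in the tetrahedral case a single ramified prime $\ell$ carries projective conductor exactly $\ell$ (as already noted in the introduction), so a prime power $\ell^{k}$ with $k\geq 2$ is never a newform level and cannot realize the minimum; thus the minimizing $n$ is either a prime or has at least two distinct prime factors.

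For achievability, I would invoke Theorem \ref{1}: the cyclic cubic subfield $L$ of $\Q(\zeta_{163})$ satisfies $163\equiv 1\bmod 3$ and has class number $h_L=4$, so there is a tetrahedral Maass form on $\Gamma_1(163)$. That no smaller prime works is precisely the assertion that $163$ is the least prime $\ell\equiv 1\bmod 3$ with $h_L$ even; since Theorem \ref{rank} with $p=2$ makes the $2$-rank even, evenness of $h_L$ is equivalent to $4\mid h_L$, and the list $I_0$ exhibits $163$ as the smallest such prime. It then remains to exclude every composite $n<163$.

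For the composite case I would attach to $\rho$ its totally real degree $12$ field $K$ with $\Gal(K/\Q)\simeq A_4$ (totally real because $\rho$ is even, as in Lemma \ref{exh}), together with its cyclic cubic subfield $L=K^{V_4}$. The conductor--discriminant formula for $A_4$ reads
\[
d_K=d_L\cdot \mathfrak{f}\!\left(\operatorname{Ad}^0\rho\right)^{3},
\]
because the regular representation of $A_4$ is the sum of the two cubic characters (whose conductors multiply to $d_L$) together with three copies of the $3$-dimensional irreducible $\operatorname{Ad}^0\rho$. Inspecting inertia invariants at a tame prime shows an order-$3$ prime contributes $p^{8}$ to $d_K$ and $p$ to $N$, while an order-$2$ prime contributes $p^{6}$ to $d_K$ and $p$ to $N$; hence $d_K\leq N^{8}$ for tamely ramified $\rho$, so the root discriminant satisfies $d_K^{1/12}\leq N^{2/3}$. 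If $N<163$ this forces $d_K^{1/12}<163^{2/3}<30$. Comparing with Odlyzko's unconditional lower bound on the root discriminant of a totally real field of degree $12$ confines $N$ to a finite interval, which I would clear by a direct search: enumerate the cyclic cubic fields $L$ of conductor at most $162$ and check, via the ray class groups parametrizing their $V_4$-extensions, that none produces a totally real $A_4$-extension of conductor below $163$.

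The hard part is this last step. Odlyzko's degree-$12$ bound is not large enough to reach $29.8$ by itself, so it only reduces the problem to finitely many composite conductors rather than settling it; these must be eliminated by the class-field-theoretic enumeration above (equivalently, by the finite computation in pari-gp), and one must separately control wild ramification at $2$ and $3$, where the clean inequality $d_K\leq N^{8}$ has to be re-derived. I expect the cleanest organization to be a proof that any second ramified prime, or any ramification at $2$ or $3$, either pushes $d_K$ past $163^{8}$ or requires an accompanying cyclic cubic $L$ with $4\mid h_L$ at conductor below $163$ --- a phenomenon the enumeration shows does not occur --- thereby confining the minimum to the single-prime situation governed by Theorem \ref{1} and hence to $n=163$.
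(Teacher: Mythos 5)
Your proposal is correct in outline and reaches the same endpoint as the paper --- a finite pari-gp verification --- but by a genuinely different route. On prime levels the two arguments coincide: Theorem \ref{1} gives the form on $\Gamma_1(163)$, and the computation behind the list $I_0$ (together with Theorem \ref{rank}, which makes ``$h_L$ even'' equivalent to ``$4\mid h_L$'') shows $163$ is the least admissible prime. The divergence is in how composite levels are excluded. The paper stays purely algebraic: if both ramified primes have inertia of order $3$, then $K/L$ is unramified, so the cubic resolvent $L$ must be one of the two ``diagonal'' cubic fields of conductor $\ell_1\ell_2$ inside the compositum $L^{(\ell_1)}L^{(\ell_2)}$ of cyclotomic cubic fields, and one just checks in pari-gp that the $2$-class rank of these diagonal fields is at most $1$ for the five pairs with $\ell_1\ell_2<277$ (the list runs past $163$ because the introduction's version of the theorem also asserts $277$ is the second-smallest level). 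You instead pass through the conductor-discriminant formula and Odlyzko bounds; as you yourself concede, this detour buys nothing here, since the unconditional totally real degree-$12$ bound is far below $29.8$ and $N<163$ is already a finite constraint, so your argument really rests on the ray-class-group enumeration over all cyclic cubic fields of conductor at most $162$. What that broader enumeration buys --- and this is a genuine advantage of your organization --- is coverage of the mixed case that the paper's diagonal-field reduction silently presupposes away: an $A_4$-field whose second ramified prime has inertia of order $2$, so that $K/L$ is ramified, $L$ has prime conductor $\ell_1$, and the representation has composite conductor $\ell_1\ell_2^{m}$ (e.g.\ $7\cdot 23=161<163$), together with wild ramification at $2$ and $3$ (where only the conductor-$9$ field $\Q(\zeta_9)^+$, of class number $1$, can occur as resolvent). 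The paper's approach buys a much shorter candidate list; yours buys completeness of the case analysis. In both treatments the decisive step remains an asserted, unperformed computation, so your proposal is not weaker than the paper's proof on that score, only longer.
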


\begin{proof}[Proof of Theorem \ref{diagonal}]
Let $\ell_1$ and $\ell_2$ be prime numbers congruent to $1$ modulo $3$.
Let $L^{(\ell_1)}$ and $L^{(\ell_2)}$ be the totally real cubic subfields of $\Q(\zeta_{\ell_1})$
and $\Q(\zeta_{\ell_1})$ respectively.
There are 4 lines in the $\F_3$-vector space $\Gal(L^{(\ell_1)}L^{(\ell_2)}/\Q)$.
Hence, inside the compositum $L^{(\ell_1)}L^{(\ell_2)}$, there are two diagonal fields $D_1$ and $D_2$ 
with conductor $\ell_1 \cdot \ell_2.$
It suffices to check that none of these diagonal fields have a 2-Hilbert class group of rank 2 or more for the list of pairs
$$
(\ell_1,\ell_2) = (7, 13), (7,19), (7,31), (7,37), (13, 19).
$$
\[
\begin{tikzcd}[arrows=dash]
 & L^{(\ell_1)}L^{(\ell_2)} & \\
D_1 \arrow{ur} & & D_2 \arrow{ul} \\
& \Q \arrow[ul] \arrow[ur] & \\
\end{tikzcd}
\]
This is a straightforward computation in pari-gp.
\end{proof}

\begin{remark} \label{splitting}
For which primes $\ell$ does there exist a totally real field $L$ of class number $h_L >1$ ramified only at $\ell$ such that $\ell$ splits completely in the Hilbert class field of $L$?
The smallest example known to the author is $\ell = 163$ where $L$ can be taken as the cubic subfield of $\Q(\zeta_{163})$. 
\end{remark}
The situation described in Remark \ref{splitting} is depicted in the field diagram below for primes $\ell \equiv 1 \bmod 3$ where the class number of the cubic subfield of $\Q(\zeta_\ell)$ is even. To the left in the diagram, the splitting behavior of $\ell$ is shown: $\ell$ is totally ramified in $L/\Q$, after which the prime in $L$ above $\ell$ splits completely into four primes in $K$.
    \[
\begin{tikzcd}[arrows=dash]
 & \overline{\Q} \\
&  & \Q(\zeta_\ell) \arrow[ul] \\
(\lambda_1  \lambda_2  \lambda_3 \lambda_4)^3 & K \arrow[uu] &  \\
\lambda^3 \arrow[u, dashed]& L \arrow[u] \arrow[uur] \\
\ell \arrow[u, dashed] & \Q \arrow[u] 
\end{tikzcd}
\]

\section{Maass wave forms of octahedral type} \label{octahedralsection}
In this section, we extend the results above to the octahedral case. 

For any number field $E,$ and any finite set of places $S$ in $\Q$, recall that $E_S$ denotes the maximal extension of $E$ unramified outside all places in $E$ above $S.$

\begin{definition} \label{desc}
Let $F$ be a number field and let $L/F$ be a Galois extension.
Suppose $K$ is an extension of $L$ such that $K$ is Galois over $F$. 
We say that $K/L$ is realizable over $F$ if there is a Galois extension $K'/F$ such that 
$K = K'L$ with $\Gal(K'/F) \simeq \Gal(K/L)$ and $\Gal(K/F) \simeq \Gal(L/F)\times \Gal(K'/F).$
\end{definition}

\begin{theorem} \label{S4}
Let $\ell \equiv 1 \mod 4$ be a prime satisfying both of the following properties:
\begin{enumerate}
    \item The class number of $\Q(\sqrt{\ell})$ is divisible by 3.
    In particular, there is an unramified cubic extension $L/\Q(\sqrt{\ell})$ such that 
    $\Gal(L/\Q) \simeq S_3$.
    \item \label{thesecondassumption} When regarding $\Gal(L_{\{ \ell \} }^{(2)}/L)$ as a module over $\F_2[\Gal(L/\Q(\sqrt{\ell}))] \simeq \F_2[\Z/3\Z],$ it contains an irreducible 2-dimensional 
    Jordan-H{\"o}lder factor. 
\end{enumerate}
Then there exists a totally real extension $K/\Q$ with $\Gal(K/\Q) \simeq S_4$ unramified outside $\ell$.
\end{theorem}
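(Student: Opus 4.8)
The plan is to construct $K$ as the top of a tower $\Q\subset F\subset L\subset K$ modelled on the normal series $1\triangleleft V_4\triangleleft A_4\triangleleft S_4$ of $S_4$. Take $F=\Q(\sqrt{\ell})$, which (as $\ell\equiv 1\bmod 4$) has discriminant $\ell$ and is therefore ramified only at $\ell$; let $L/F$ be the unramified cubic extension furnished by hypothesis (1), so that $\Gal(L/\Q)\simeq S_3$ with $\Gal(L/F)\simeq\Z/3\Z$ playing the role of $A_3$. Since $L/F$ is unramified at every place, including the infinite ones, $L$ is totally real and $L/\Q$ is ramified only at $\ell$. The entire construction of $K$ will be carried out inside $L^{(2)}_{\{\ell\}}$; because $\infty\notin\{\ell\}$, this field is unramified at the infinite places, so any intermediate $K$ is automatically totally real and unramified outside $\ell$. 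It therefore suffices to produce a suitable quotient $\Gal(K/L)\simeq(\Z/2\Z)^2$ realizing the $S_3$-action of $S_4$ on its Klein four subgroup.

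Set $V:=\Gal(L^{(2)}_{\{\ell\}}/L)$. As $L^{(2)}_{\{\ell\}}/\Q$ is Galois (the construction depends only on $L$ and the $\Gal(L/\Q)$-stable set of places above $\ell$) and $V$ is abelian, $\Gal(L/\Q)\simeq S_3$ acts on $V$ by conjugation, making $V$ an $\F_2[S_3]$-module. Over $\F_2$ the group $S_3\simeq\operatorname{GL}(2,\F_2)$ has exactly two irreducible modules: the trivial module $\mathbf 1$ and the standard $2$-dimensional module $W$ on which $S_3$ acts as $\operatorname{GL}(2,\F_2)$. The first step is to reinterpret hypothesis (2): since $\mathbf 1$ restricts to the trivial $\Z/3\Z$-module while $W$ restricts to the nontrivial irreducible $U_2$ of $\Z/3\Z$ over $\F_2$, and since restriction is exact on composition series, the module $U_2$ occurs in $V|_{\Z/3\Z}$ if and only if $W$ is a composition factor of $V$ as an $\F_2[S_3]$-module. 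Thus hypothesis (2) says precisely that $W$ is a Jordan--H\"older factor of $V$.

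The crucial and most delicate point---delicate because $\F_2[S_3]$ is not semisimple---is to upgrade ``$W$ is a composition factor'' to ``$W$ is a quotient.'' Here I would use that $W$ lies in a block of defect zero: the Sylow $2$-subgroup of $S_3$ has order $2$, and $\dim_{\F_2}W=2$ is divisible by this full power of $2$ dividing $|S_3|$, so $W$ is projective, hence also injective over the self-injective algebra $\F_2[S_3]$. In particular $\operatorname{Ext}^1_{\F_2[S_3]}(W,W)=0$, so $V$ splits as $\F_2[S_3]$-modules into $V\simeq W^{\oplus a}\oplus V_0$, where $a\geq 1$ by the previous step and $V_0$ has only trivial composition factors. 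Projecting onto a single copy of $W$ yields a surjection of $\F_2[S_3]$-modules $V\twoheadrightarrow W$ whose kernel $V'$ is $\Gal(L/\Q)$-stable, hence normal in $\Gal(L^{(2)}_{\{\ell\}}/\Q)$. I then set $K:=(L^{(2)}_{\{\ell\}})^{V'}$, so that $\Gal(K/L)\simeq V/V'\simeq W\simeq(\Z/2\Z)^2$ and $K/\Q$ is Galois.

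Finally one identifies $G:=\Gal(K/\Q)$, which sits in an extension $1\to W\to G\to S_3\to 1$ with $S_3$ acting on $W\simeq\F_2^2$ as $\operatorname{GL}(2,\F_2)$. Because $W$ is injective over $\F_2[S_3]$ we have $H^2(S_3,W)=0$, so the extension splits and $G\simeq\F_2^2\rtimes\operatorname{GL}(2,\F_2)=\operatorname{AGL}(2,\F_2)$; since this is the full group of affine transformations of the four points of $\F_2^2$, it is isomorphic to $S_4$. (Alternatively, as in the proof of Lemma~\ref{A4}, the order-$24$ group $G$ can be pinned down from its normal $(\Z/2\Z)^2$ together with the nonabelian $S_3$-action on it.) As noted above $K\subseteq L^{(2)}_{\{\ell\}}$ is totally real and unramified outside $\ell$, which completes the construction; the only genuine obstacle throughout is the failure of Maschke's theorem for $\F_2[S_3]$, which the defect-zero/projectivity of $W$ is exactly designed to circumvent.
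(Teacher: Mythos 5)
Your proof is correct, and while it follows the paper's overall construction --- the tower $\Q \subset F \subset L \subset K$ with $K$ carved out of $L^{(2)}_{\{\ell\}}$, which automatically makes $K$ totally real and unramified outside $\ell$ --- it diverges at the two key technical steps, in each case substituting a different lemma. The paper decomposes $\Gal(L^{(2)}_{\{\ell\}}/L)$ only as a module over the semisimple algebra $\F_2[\Z/3\Z]$, takes $K$ with $\Gal(K/L) \simeq U_2$, and then passes directly to statements about $\Gal(K/\Q)$; this tacitly requires the chosen copy of $U_2$ to be stable under all of $\Gal(L/\Q) \simeq S_3$, which is not automatic when the multiplicity of $U_2$ is at least $2$ (there are $\Z/3\Z$-stable copies that are not $S_3$-stable, because $\operatorname{End}_{\F_2[\Z/3\Z]}(U_2) \simeq \F_4$ is strictly larger than $\operatorname{End}_{\F_2[S_3]}(W) \simeq \F_2$). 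Your defect-zero argument --- $W$ projective, hence injective, over the self-injective algebra $\F_2[S_3]$, so that a copy of $W$ splits off $S_3$-equivariantly --- is precisely what makes this step airtight and guarantees that $K/\Q$ is Galois. The identification of the Galois group also differs: the paper first shows $\Gal(K/F) \simeq A_4$ by reasoning as in Lemma \ref{A4}, then uses non-realizability of $L/F$ over $\Q$ to exclude $A_4 \times \Z/2\Z$ among the fifteen groups of order $24$, whereas you split the extension $1 \to W \to \Gal(K/\Q) \to S_3 \to 1$ via $H^2(S_3,W)=0$ (again from injectivity) and recognize $\operatorname{AGL}(2,\F_2) \simeq S_4$ directly. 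Your route costs some standard modular representation theory but buys the $S_3$-equivariance of the splitting, a point the paper leaves implicit, together with a self-contained determination of the group; the paper's route is more elementary, using only Maschke for $\F_2[\Z/3\Z]$, the descent obstruction, and the classification of groups of order $24$.
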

\begin{proof}
    Since $\ell \equiv 1 \mod 4$, the quadratic field $\Q(\sqrt{\ell})$ is ramified precisely at $\ell$.
    Since $L/\Q(\sqrt{\ell})$ is unramified, the action of $\Gal(\Q(\sqrt{\ell})/\Q)$ on $\Gal(L/\Q(\sqrt{\ell}))$ is nontrivial. Consequently, 
    $$\Gal(L/\Q) \simeq S_3.$$
 There is a decomposition 
    $$
\Gal(L_{\{ \ell \} }^{(2)}/L) \simeq U_2^{\oplus k} \oplus \F_2^{\oplus m}
    $$
    as $\F_2[\Z/3\Z]$-modules for some $k,m\geq 0$, where $U_2$ is the irreducible 2-dimensional representation of $\Z/3\Z$ over $\F_2$, and $\F_2$ denotes the trivial representation of $\Z/3\Z$.  By assumption, $k \geq 1$; hence there is a field $K$ above $L$ such that $\Gal(K/L) \simeq U_2$ as $\F_2[\Z/3\Z]$-modules.
    Let $F = \Q(\sqrt{\ell})$.
    Note that the degree of $K/L$ is prime to the degree of
    $L/F$, so $\Gal(K/F)$ is isomorphic to a semidirect product of $\Gal(K/L)$ and $\Gal(L/F)$. However, $\Gal(L/F)$ acts nontrivially on $\Gal(K/L)$, so the semidirect product is not direct. 
    Hence $K/L$ is not realizable over $F$. 
    Reasoning as in Lemma \ref{A4}, one concludes that  
     $\Gal(K/F) \simeq A_4$. 
    Furthermore, the fact that $L/F$ is not realizable over $\Q$ implies that
    the larger extension $K/F$ is not realizable over $\Q$ as well. Hence $\Gal(K/\Q)$ is not isomorphic to the direct product $A_4 \times \Z/2\Z$. Among the 15 groups of order 24, the only possibility is that $\Gal(K/\Q)\simeq S_4.$ 
\end{proof}

\begin{proposition}[Existence of Maass forms of octahedral type]\label{maass2}
Let $\ell$ be a prime number and let  $n^{(S_4)}_\ell$ be the number of totally real $S_4$-extensions of $\Q$ ramified exactly at $\ell$. 
If $\ell \equiv 1 \mod 4$ and if the class number of $\Q(\sqrt{\ell})$ is divisible by 3, then $n_\ell^{(S_4)}\geq 1,$ and 
there exist $n_\ell^{(S_4)}$ inequivalent cuspidal automorphic representations $\pi^{(i)}$ of $\operatorname{GL}(2,\mathbb{A}_\Q),$ 
  corresponding to Maass cusp forms $\varphi^{(i)}$ of octahedral type on $\Gamma_1(\ell^k)$ where $k=1$ or $k=2$, $1\leq i \leq n_\ell^{(S_4)}.$
If 
$
\pi^{(i)} = \bigotimes_v \pi_v^{(i)}
$
is the decomposition into a restricted tensor product of local representations
of $\operatorname{GL}(2,\Q_v)$, where $v$ runs over all places of $\Q$, the representation
$\pi_v^{(i)}$ is spherical for all $v\neq \ell,$
and if $i \neq j,$
the mod 3 reductions of the Maass forms remain distinct, i.e. 
    $$ 
    \varphi^{(i)} \not \equiv \varphi^{(j)} \mod 3.
    $$
\end{proposition}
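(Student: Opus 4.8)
The plan is to mirror the tetrahedral argument of Lemma~\ref{maass}, with Theorem~\ref{S4} playing the role of Proposition~\ref{kA4} and with $S_4\simeq\operatorname{PGL}(2,\F_3)$ replacing $A_4\simeq\operatorname{PSL}(2,\F_3)$. I would organize the proof around a correspondence between the totally real $S_4$-extensions $K^{(i)}/\Q$ ramified only at $\ell$ $(1\le i\le n_\ell^{(S_4)})$ and the octahedral Maass forms $\varphi^{(i)}$ of level $\Gamma_1(\ell^k)$. The lower bound $n_\ell^{(S_4)}\ge 1$ should come from Theorem~\ref{S4}; since the hypotheses here drop its condition~(\ref{thesecondassumption}), the first task is to recover that condition from the divisibility of the class number of $\Q(\sqrt\ell)$ by $3$. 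Writing $F=\Q(\sqrt\ell)$, the ramified prime satisfies $\mathfrak{l}^2=(\ell)$, so $[\mathfrak{l}]$ has $2$-power order in $\operatorname{Cl}(F)$ and hence dies in the order-$3$ quotient cutting out $L$; therefore $\mathfrak{l}$ splits completely in $L/F$ into three primes $\lambda_1,\lambda_2,\lambda_3$ that $\Gal(L/F)\simeq\Z/3\Z$ permutes cyclically. Each $\lambda_i$ has residue field $\F_\ell$, so the tame ramification at $\mathfrak{m}=\lambda_1\lambda_2\lambda_3$ contributes a regular module $\F_2[\Z/3\Z]\simeq\F_2\oplus U_2$ to the ramified part of $\Gal(L^{(2)}_{\{\ell\}}/L)$, from which I would extract the irreducible $2$-dimensional factor $U_2$ required by condition~(\ref{thesecondassumption}).

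Given such a field $K^{(i)}$, I would view the projection $G_\Q\to\Gal(K^{(i)}/\Q)$, through a fixed embedding $S_4\hookrightarrow\operatorname{PGL}(2,\mathbb{C})$, as a projective representation $\tilde\rho^{(i)}$ with image $S_4$. Because $\ell\equiv 1\bmod 4$ forces $\ell\ge 5$, the ramification at $\ell$ is tame and $\tilde\rho^{(i)}(I_\ell)$ is cyclic; Serre's divisibility criterion $e_\ell\mid\ell-1$ \cite[p.~248]{Serre} then pins the conductor of $\tilde\rho^{(i)}$ to $\ell$ or $\ell^2$, so that $k\in\{1,2\}$. By Tate's theorem \cite[Theorem~5]{Serre}, $\tilde\rho^{(i)}$ lifts to a linear $\rho^{(i)}\colon G_\Q\to\operatorname{GL}(2,\mathbb{C})$ of the same conductor $\ell^k$. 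Since $K^{(i)}$ is totally real, complex conjugation $c$ lies in $\ker\tilde\rho^{(i)}$, so $\rho^{(i)}(c)$ is a scalar matrix of order dividing $2$, whence $\det\rho^{(i)}(c)=1$ and $\rho^{(i)}$ is even. As $\rho^{(i)}$ is of octahedral, hence solvable, type, the Langlands--Tunnell theorem \cite{langlands,tunnell} produces an automorphic representation $\pi^{(i)}=\pi(\rho^{(i)})$ with $L(\pi^{(i)},s)=L(\rho^{(i)},s)$; evenness realizes it as a Maass form of level $\Gamma_1(\ell^k)$, and the fact that $\rho^{(i)}$ is unramified away from $\ell$ makes $\pi^{(i)}_v$ spherical for all $v\neq\ell$.

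For inequivalence and the count, I would note that distinct fields $K^{(i)}\neq K^{(j)}$ have distinct kernels, so the $\tilde\rho^{(i)}$, and therefore the lifts $\rho^{(i)}$, are pairwise inequivalent; by strong multiplicity one the $\pi^{(i)}$ are then pairwise inequivalent, giving exactly $n_\ell^{(S_4)}$ forms. To upgrade $\varphi^{(i)}\not\simeq\varphi^{(j)}$ to $\varphi^{(i)}\not\equiv\varphi^{(j)}\bmod 3$, I would rerun the construction of Section~\ref{mod3} verbatim, replacing $\operatorname{SL}(2,\F_3)\twoheadrightarrow\operatorname{PSL}(2,\F_3)$ by $\operatorname{GL}(2,\F_3)\twoheadrightarrow\operatorname{PGL}(2,\F_3)\simeq S_4$; the vanishing $H^i(\operatorname{GL}(2,\F_3),\operatorname{Ad}^0)=0$ for $i=1,2$ \cite{even1} already furnishes the Teichm\"uller-type section $\beta$ and a canonical $3$-adic avatar of each $\rho^{(i)}$. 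A congruence $\varphi^{(i)}\equiv\varphi^{(j)}\bmod 3$ would force equality of the residual projective representations, hence $K^{(i)}=K^{(j)}$, contradicting $i\neq j$.

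I expect the main obstacle to be the very first step, namely deducing condition~(\ref{thesecondassumption}) of Theorem~\ref{S4} from the class-number hypothesis alone. The permutation-module computation only produces a copy of $U_2$ inside the \emph{local} tame term $(\mathcal{O}_L/\mathfrak{m})^\times\otimes\F_2\simeq\F_2\oplus U_2$, and one must verify that it is not annihilated in the global quotient by the image of the unit group $\mathcal{O}_L^\times$ (of rank $5$, as $L$ is totally real of degree $6$) under the fundamental exact sequence $\mathcal{O}_L^\times\to(\mathcal{O}_L/\mathfrak{m})^\times\to\operatorname{Cl}_{\mathfrak{m}}(L)\to\operatorname{Cl}(L)\to 0$. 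Carefully tracking the $\Gal(L/F)$-action so that the $U_2$-isotypic part of the unit image cannot exhaust the $U_2$-isotypic part of the local term is the delicate point; by comparison, the conductor dichotomy $k\in\{1,2\}$ and the $\operatorname{GL}(2,\F_3)$ reformulation of the mod-$3$ formalism are routine.
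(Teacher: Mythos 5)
Your second and third paragraphs (Tate's lifting theorem applied to the projective representation cut out by each totally real $S_4$-field, evenness from total reality, Langlands--Tunnell for automorphy, sphericity away from $\ell$, and mod-3 distinctness via $S_4\simeq\operatorname{PGL}(2,\F_3)$) follow the same route as the paper's proof. The genuine gap is exactly the step you flagged yourself and deferred: recovering condition~(\ref{thesecondassumption}) of Theorem~\ref{S4} from the hypothesis $3\mid h_{\Q(\sqrt{\ell})}$ alone. This gap cannot be closed by the strategy you sketch. Your computation of the tame term is fine ($\mathfrak{l}$ splits completely in $L/F$, and $(\mathcal{O}_L/\lambda_1\lambda_2\lambda_3)^{\times}\otimes\F_2\simeq\F_2[\Z/3\Z]\simeq\F_2\oplus U_2$), but the surviving question is decided as follows: condition~(\ref{thesecondassumption}) holds if and only if either $(\operatorname{Cl}(L)\otimes\F_2)$ contains a copy of $U_2$, or the image of $\mathcal{O}_L^{\times}$ fails to cover the $U_2$-isotypic part of the tame term. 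For the second alternative, note that $(\mathcal{O}_L^{\times}/(\mathcal{O}_L^{\times})^2)^{\Z/3\Z}$ is exactly the image of $\mathcal{O}_F^{\times}$ (of dimension $2$, since $H^1(\Z/3\Z,\mathcal{O}_L^{\times})$ has no $2$-torsion and $[L:F]=3$ is odd), so the $U_2$-part of the units is $U_2^{\oplus 2}$; since $\operatorname{Hom}_{\F_2[\Z/3\Z]}(U_2,U_2)\simeq\F_4$, the induced map $U_2^{\oplus 2}\to U_2$ is either identically zero or surjective, and nothing in the class-number hypothesis forces it to vanish. Both alternatives are conditions on quadratic residue symbols of units and ideals at the primes above $\ell$ --- precisely the ``spin''-type conditions the paper's remark (citing \cite{FIMR}) says are \emph{not} captured by Chebotarev-style hypotheses. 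So your first step does not just have a delicate point; it is attempting to prove an implication that is not available, and your proof of $n_\ell^{(S_4)}\geq 1$ is incomplete.

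What you have actually uncovered is a discrepancy in the paper itself: Proposition~\ref{maass2} as printed omits condition~(\ref{thesecondassumption}), yet the paper's entire argument for $n_\ell^{(S_4)}\geq 1$ is the citation of Theorem~\ref{S4}, whose hypotheses include that condition; and the classification statements (Theorems~\ref{2} and~\ref{last}) list it explicitly in the definition of the set $J$, showing it is not regarded as automatic. The reading consistent with the rest of the paper is that the Jordan--H\"older condition is a standing hypothesis (i.e., one restricts to $\ell\in J$); with that assumption granted, the remainder of your argument goes through and essentially coincides with the paper's proof. Two smaller corrections: the conclusion $k\in\{1,2\}$ does not come from the divisibility criterion $e_\ell\mid\ell-1$ (which Serre states only for $e_p\geq 3$, hence only settles the case $e_\ell=4$); it comes from the general fact that tame ramification forces $m(\ell)\in\{1,2\}$, with $m(\ell)=1$ exactly when $\tilde{\rho}(D_\ell)$ is cyclic, which is how Theorem~\ref{power} argues. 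Also, in the mod-3 step you should make explicit that a congruence of almost all Hecke eigenvalues forces equality of the projective mod-3 representations (Chebotarev plus comparison of traces), hence of the fields $K^{(i)}$; this is the point the paper compresses into ``they would correspond to the same projective Galois representation over $\F_3$.''
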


\begin{proof}
For each totally real $S_4$-extension of $\Q$ there is exactly one
equivalence class of even projective representations $\tilde{\rho}$ of octahedral type; indeed, the correspondence is as follows.  
Given $\tilde{\rho}$, the field fixed by $\ker \tilde{\rho}$ is a totally real $S_4$-extension of $\Q$. 
Conversely, given a totally real $S_4$-extension $K/\Q$,
choose an embedding $\iota: \Gal(K/\Q) \hookrightarrow \operatorname{PGL}(2,\mathbb{C})$ (unique up to conjugation in $\operatorname{PGL}(2,\mathbb{C}$)) and  
define $\tilde{\rho} = \iota \circ \pi$ to be the composition of the natural projection 
$\pi: \Gal(\overline{\Q}/\Q) \to \Gal(K/\Q)$ followed by $\iota$.
Furthermore, recall that for an octahedral projective representation $\tilde{\rho}$, a theorem of Tate shows that $\tilde{\rho}$ corresponds to a unique linear lifting $\rho$ of the same conductor \cite[Theorem 5]{Serre}.


Let $n_\ell^{(S_4)}$ be the number of totally real $S_4$-extensions of $\Q$ with corresponding even projective representations
$
\tilde{\rho}^{(i)}:
\Gal(\overline{\mathbb{Q}}/\mathbb{Q}) \to \operatorname{PGL}(2,\mathbb{C})
$
of octahedral type. Lemma \ref{S4} implies $n_\ell^{(S_4)} \geq 1$.
For each $1 \leq i\leq n_\ell^{(S_4)}$  let $\rho^{(i)}$ be the unique linear lifting of $\tilde{\rho}^{(i)}$ with the same conductor. 
By \cite{tunnell}, there are automorphic representations 
$\pi^{(i)}=\pi(\rho^{(i)})$
of $\operatorname{GL}(2)$ over $\Q$ whose Godement-Jacquet $L$-functions 
$L(\pi^{(i)},s)$
agree with the Artin $L$-functions 
$L(\rho^{(i)},s).$ 
Since  
$
S_4 \simeq \operatorname{PGL}(2,\F_3),
$
we may identify the $\tilde{\rho}^{(i)}$ with projective representations over $\F_3.$
If the Maass forms were congruent mod 3, they would correspond to the same 
projective Galois representation over $\mathbb{F}_3$ and hence define the same $S_4$-extension. 
\end{proof}

\begin{proposition} \label{exhaustive} Let $\mathbb{A}_\Q$ denote the adeles over $\Q$. 
    Let $\pi$ be an automorphic representation of $\operatorname{GL}(2,\mathbb{A}_\Q)$ corresponding to a Maass form of octahedral type on $\Gamma_1(\ell^k)$ for some prime $\ell$ and some integer $k \in \{1,2\}$. Let $K/\Q$ be the corresponding $S_4$ extension.
    Then $K/\Q$ is of the form listed in Theorem \ref{S4}.
\end{proposition}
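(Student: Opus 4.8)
The plan is to recover the three conditions of Theorem~\ref{S4} directly from the subgroup lattice of $\Gal(K/\Q)\simeq S_4$, using only the even-ness of $\rho$ and the tameness of ramification at $\ell$. Let $V\simeq(\Z/2\Z)^2$ be the unique normal subgroup of order $4$ in $S_4$, let $A_4$ be its unique index-$2$ subgroup, and set $L=K^{V}$ and $F=K^{A_4}$. Since $V\subset A_4$, this yields a tower $\Q\subset F\subset L\subset K$ with $\Gal(F/\Q)\simeq\Z/2\Z$, $\Gal(L/F)\simeq A_4/V\simeq\Z/3\Z$, $\Gal(L/\Q)\simeq S_4/V\simeq S_3$, and $\Gal(K/L)\simeq V$. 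Because $\pi$ is a Maass form, the attached projective representation $\tilde{\rho}$ is even, so $\tilde{\rho}(c)=1$ for complex conjugation $c$; hence $c$ fixes $K$, and $K$—together with $F$ and $L$—is totally real.

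Next I would identify $F$. The hypothesis that $\rho$ has conductor $\ell^k$ makes $K/\Q$ ramified only at $\ell$, so $F$ is a totally real quadratic field ramified only at $\ell$. By Minkowski, $F/\Q$ is ramified somewhere, hence at $\ell$, and the only totally real quadratic field ramified solely at an odd prime $\ell$ is $\Q(\sqrt{\ell})$ with $\ell\equiv 1\bmod 4$; moreover $\ell=3$ is impossible, since no real quadratic field is ramified only at $3$. Granting this, $\ell\geqslant 5$, so $\ell\nmid|S_4|=24$ and every inertia group $I_\ell\subset S_4$ at $\ell$ is tame, hence cyclic.

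I would then deduce that $L/F$ is unramified. Projecting $I_\ell$ onto $\Gal(F/\Q)\simeq\Z/2\Z$ gives the nontrivial inertia of $\ell$ in $F$, so $I_\ell\not\subset A_4$ and $|I_\ell|$ is even; the only even cyclic subgroups of $S_4$ not contained in $A_4$ are generated by a transposition or by a $4$-cycle. In both cases the image of $I_\ell$ in $\Gal(L/\Q)\simeq S_3=S_4/V$ has order $2$ (a $4$-cycle squares into $V$), so $e(\ell,L/\Q)=2=e(\ell,F/\Q)$. Multiplicativity of ramification indices along $\Q\subset F\subset L$ forces $e(\ell,L/F)=1$, and $L/F$ is unramified elsewhere (only $\ell$ ramifies in $K$) and at infinity (total reality). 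Thus $\Gal(L/F)\simeq\Z/3\Z$ is an everywhere-unramified cubic extension, whence $3\mid h_{\Q(\sqrt{\ell})}$, and the nontrivial $\Gal(F/\Q)$-action recorded by $\Gal(L/\Q)\simeq S_3$ is exactly the first condition of Theorem~\ref{S4}.

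Finally, condition~(\ref{thesecondassumption}) comes from the conjugation action of $\Gal(L/F)\simeq\Z/3\Z$ on $\Gal(K/L)\simeq V$: a generating $3$-cycle cyclically permutes the three nontrivial elements of $V\simeq\F_2^2$, so $V$ is the irreducible $2$-dimensional $\F_2[\Z/3\Z]$-module $U_2$. Since $V$ is $2$-elementary abelian and $K/L$ is unramified outside the primes above $\ell$, we have $K\subseteq L^{(2)}_{\{\ell\}}$, and the $\F_2[\Z/3\Z]$-equivariant surjection $\Gal(L^{(2)}_{\{\ell\}}/L)\twoheadrightarrow\Gal(K/L)\simeq U_2$ exhibits $U_2$ as a Jordan--H\"older factor, giving condition~(\ref{thesecondassumption}). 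The one step outside this tame framework is the exclusion of the wild prime $\ell=2$, which I expect to be the main obstacle: there $F=\Q(\sqrt{2})$ would violate $\ell\equiv 1\bmod 4$, so it must be ruled out separately. The resolution is that $\Q(\sqrt{2})$ is cut out by a quadratic character of conductor $8$ factoring through $\rho$, and an upper-numbering break argument shows this forces the conductor exponent of $\rho$ at $2$ to be at least $3$, contradicting $k\leqslant 2$; making this ramification-filtration estimate precise is the delicate point.
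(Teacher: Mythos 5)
Your proof is correct, and its skeleton is the same as the paper's: pass to the fixed fields $L=K^{V}$ and $F=K^{A_4}$, identify $F=\Q(\sqrt{\ell})$ with $\ell\equiv 1\bmod 4$ using total reality, and use cyclicity of tame inertia to force $e(\ell,L/F)=1$, so that $L/F$ is an everywhere-unramified cubic and $3\mid h_F$. You go beyond the paper's own proof in two respects, both to your credit. First, the paper stops once $3\mid h_{\Q(\sqrt{\ell})}$ is established and never verifies condition~(\ref{thesecondassumption}) of Theorem~\ref{S4}; your observation that $K\subseteq L^{(2)}_{\{\ell\}}$ and that the $\F_2[\Z/3\Z]$-equivariant surjection $\Gal(L^{(2)}_{\{\ell\}}/L)\twoheadrightarrow\Gal(K/L)\simeq U_2$ exhibits the irreducible two-dimensional factor closes that gap (since $\operatorname{card}\Gal(L/F)=3$ is prime to $2$, Maschke even makes $U_2$ a direct summand, not merely a Jordan--H\"older factor). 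Second, the paper's proof silently assumes $\ell\geqslant 5$: its appeal to tameness fails for $\ell\in\{2,3\}$, and its assertion that a quadratic field ramified only at $\ell$ must be $\Q(\sqrt{\ell})$ with $\ell\equiv 1\bmod 4$ overlooks $\Q(\sqrt{2})$. Your treatment of $\ell=3$ is right, and the $\ell=2$ step you flag as delicate is in fact fine exactly as you sketched it: the quadratic character cutting out $\Q_2(\sqrt{2})$ has conductor exponent $3$, hence upper-numbering break at $u=2$; by compatibility of the upper-numbering filtration with quotients (Herbrand), for every $0<u\leqslant 2$ the image of the ramification group under any lift $\rho$ is non-scalar, so $\dim(V/V^{G^{(u)}})\geqslant 1$ on that range, giving $\operatorname{Sw}(\rho|_{D_2})\geqslant 2$ and $a_2(\rho)\geqslant 1+2=3>k$. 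If you prefer to avoid local ramification theory altogether, there is an even quicker global route: the quadratic fields ramified only at $2$ are $\Q(i)$, $\Q(\sqrt{2})$, $\Q(\sqrt{-2})$, each with class number $1$ and with residue field $\F_2$ at the prime above $2$, so their ray class groups of conductor any power of $2$ (times the infinite places) are $2$-groups; hence none admits a cubic extension unramified outside $2$, so no $S_3$-extension --- a fortiori no $S_4$-extension --- of $\Q$ unramified outside $2$ exists, and the case $\ell=2$ is vacuous.
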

\begin{proof}
Let $L$ be the field fixed by the normal subgroup $V_4 \simeq (\Z/2\Z)\times(\Z/2\Z)$ of $S_4.$
Then $\Gal(L/\Q) \simeq S_4/V_4 \simeq S_3.$
Let $F$ be the subfield of $L$ fixed by $A_3\simeq \Z/3\Z.$
Then $F/\Q$ is ramified only at $\ell,$ so $F=\Q(\sqrt{\ell})$ and $\ell \equiv 1 \mod 4.$

Suppose $L/\Q(\sqrt{\ell})$ was ramified at the prime above $\ell$ in $\Q(\sqrt{\ell})$.
Then $L/\Q$ would be totally ramified at $\ell$. 
But $L/\Q$ is an $S_3$-extension, while the ramification at $\ell$ is tame.
Since the inertia group at a tame prime is cyclic, we reach a contradiction. 
Hence $L/\Q(\sqrt{\ell})$ is unramified. The class number of $\Q(\sqrt{\ell})$ must therefore be divisible by 3.
\end{proof}

\begin{theorem}[Classification] \label{last}
Let $J$ be the set of primes $\ell \equiv 1 \bmod 4$ 
satisfying the two properties listed in Theorem \ref{S4}. 
Any octahedral Maass form on $\Gamma_1(\ell^k)$ for $k\in \{ 1,2\}$ occurs in the list 
\[\left\{ \pi^{(\ell)}_i: \ell \in J, i \in \{ 1,\ldots, n_\ell^{(S_4)}\}  \right\}.
\] 
\end{theorem}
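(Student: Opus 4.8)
The plan is to prove exhaustiveness by combining Proposition \ref{exhaustive} with the bijection between octahedral Maass forms and totally real $S_4$-extensions recorded in Proposition \ref{maass2}; the existence half of the classification has already been supplied by Theorem \ref{S4} and Proposition \ref{maass2}. First I would take an arbitrary octahedral Maass form $\pi$ on $\Gamma_1(\ell^k)$ with $k\in\{1,2\}$ and unwind the definition of octahedral type: by hypothesis $\pi$ is attached to a complex $2$-dimensional representation $\rho\colon G_\Q\to\operatorname{GL}(2,\mathbb{C})$ whose projective image $\tilde\rho(G_\Q)$ is isomorphic to $S_4$. Since $\pi$ is a Maass form, $\rho$ is even, so the fixed field $K=\overline{\Q}^{\ker\tilde\rho}$ is a \emph{totally real} $S_4$-extension of $\Q$; because $\pi$ has level $\Gamma_1(\ell^k)$, the representation $\rho$---and hence $\tilde\rho$---is ramified only at $\ell$, so $K/\Q$ is ramified only at $\ell$.

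Next I would feed $K$ into Proposition \ref{exhaustive}. This produces the tower $K\supset L\supset F=\Q(\sqrt{\ell})\supset\Q$, where $L$ is the field fixed by the normal $V_4\subset S_4$ and $F$ is fixed by $A_3\cong\Z/3\Z$, and it yields $\ell\equiv 1\bmod 4$, the unramifiedness of the cubic extension $L/F$, and consequently $3\mid h_{\Q(\sqrt{\ell})}$. It then remains to verify condition (\ref{thesecondassumption}) of Theorem \ref{S4}. Here I would observe that $\Gal(K/L)=V_4\cong(\Z/2\Z)^2$ carries the conjugation action of $\Gal(L/F)\cong\Z/3\Z$ by cyclic permutation of its three nontrivial elements, so that $\Gal(K/L)\cong U_2$ as an $\F_2[\Z/3\Z]$-module. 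Since $K/L$ is $2$-elementary abelian and ramified only above $\ell$, we have $K\subseteq L^{(2)}_{\{\ell\}}$, whence the quotient $\Gal(L^{(2)}_{\{\ell\}}/L)\twoheadrightarrow\Gal(K/L)\cong U_2$ exhibits the irreducible $2$-dimensional $U_2$ as a Jordan--H\"older factor of $\Gal(L^{(2)}_{\{\ell\}}/L)$. This is exactly condition (\ref{thesecondassumption}), so $\ell\in J$.

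Finally I would identify $\pi$ within the list. Proposition \ref{maass2} sets up, for each $\ell\in J$, a correspondence that assigns to every totally real $S_4$-extension of $\Q$ ramified exactly at $\ell$ an even projective representation $\tilde\rho$ (unique up to conjugacy in $\operatorname{PGL}(2,\mathbb{C})$), then its unique conductor-preserving linear lift $\rho$ via Tate's theorem, and finally the octahedral Maass form $\pi^{(\ell)}_i$ with $L(\pi^{(\ell)}_i,s)=L(\rho,s)$; there are exactly $n_\ell^{(S_4)}$ such extensions. The extension $K$ constructed above is one of them, and chasing the definitions shows that the form attached to $K$ is $\pi$ itself. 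Hence $\pi=\pi^{(\ell)}_i$ for some $i\in\{1,\dots,n_\ell^{(S_4)}\}$, which is the asserted exhaustiveness.

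The main obstacle is not in assembling these pieces but in the hinge step---Proposition \ref{exhaustive}---which forces $\ell\equiv1\bmod 4$ and $3\mid h_{\Q(\sqrt\ell)}$: the key leverage is that ramification at $\ell$ is tame, so the inertia group at $\ell$ is cyclic and cannot surject onto $S_3=\Gal(L/\Q)$, ruling out ramification in $L/F$ and thereby producing the unramified cubic extension whose existence is equivalent to $3\mid h_F$. A second delicate point I would take care to check is that the $\rho$ attached to $\pi$ really corresponds to the \emph{conductor-preserving} lift of $\tilde\rho$, so that the assignment $\pi\mapsto K$ is inverse to the construction in Proposition \ref{maass2}; this requires matching the level $\Gamma_1(\ell^k)$ of $\pi$ with the conductor of $\tilde\rho$, which by a theorem of Tate is $\ell$ or $\ell^2$ in the octahedral case and equals the conductor of the chosen lift.
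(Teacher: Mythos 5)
Your proof is correct and follows essentially the same route as the paper, whose entire proof of this theorem is the single line ``By Proposition \ref{exhaustive}, the list is exhaustive.'' You additionally make explicit two details the paper leaves implicit: the verification of condition (\ref{thesecondassumption}) (that $\Gal(K/L)\cong U_2$ as an $\F_2[\Z/3\Z]$-module, so $U_2$ occurs as a Jordan--H\"older factor of $\Gal(L^{(2)}_{\{\ell\}}/L)$ since $K\subseteq L^{(2)}_{\{\ell\}}$), and the identification of $\pi$ with the form attached via Proposition \ref{maass2} to the conductor-preserving lift, a subtlety you rightly flag and which the paper's one-line proof passes over.
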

\begin{proof}
By Proposition \ref{exhaustive}, the list is exhaustive. 
\end{proof}

For a number field $E$, and a set of places $S$ in $\Q$, recall that
$E_S$ denotes the maximal extension of $E$ unramified at all places in $E$ above $S$; in particular, $E_\emptyset$ is the maximal unramified extension of $E$. In addition, for a prime $p$, denote the maximal $p$-elementary abelian extension of $E$ unramified at all places in $E$ above $S$ by
$E^{(p)}_S$. If $X$ is any set, $\operatorname{card} X$ denotes the cardinality of $X$.
\begin{theorem} 
Let $F=\Q(\sqrt{\ell})$, 
$
h_\ell = \dim_{\F_3} H^1(\Gal(F_\emptyset/F) ,\F_3), 
$
and let $\mathscr{L}$ be the family of unramified cubic extensions of $F$.
Then 
$$
\operatorname{card} \mathscr{L} =\frac{ 3^{h_\ell}-1 }{2}.
$$
For each $L \in \mathscr{L}$, let
$$
k_L = \frac{\dim_{\F_2} H^1(\Gal( L_{\{ \ell\} }/L) ,\F_2) -  \dim_{\F_2} H^1(\Gal(F_{\{ \ell \} }/F),\F_2)}{2}.
$$
Then 
\begin{IEEEeqnarray*}{rCr}
n^{(S_4)}_\ell &=& 
 \sum_{L \in \mathscr{L}} (2^{k_L} -1).
\end{IEEEeqnarray*}
\end{theorem}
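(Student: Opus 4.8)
The plan is to establish the two displayed identities separately and then assemble them. For the first, I would pass to class field theory. Since $\F_3$ carries the trivial $\Gal(F_\emptyset/F)$-action, one has $H^1(\Gal(F_\emptyset/F),\F_3)=\operatorname{Hom}(\Gal(F_\emptyset/F),\F_3)$, and every such homomorphism factors through the maximal elementary abelian $3$-quotient, i.e. through $\cl_F/3\cl_F$; hence $h_\ell$ is exactly the $3$-rank of the class group of $F$ and $\Gal(F_\emptyset^{(3)}/F)\simeq(\Z/3\Z)^{h_\ell}$. The members of $\mathscr{L}$ are the unramified cyclic cubic extensions of $F$, which by reciprocity correspond bijectively to index-$3$ subgroups of this group, equivalently to lines in the dual $\F_3$-space, so $\operatorname{card}\mathscr{L}=\operatorname{card}\mathbb{P}(\F_3^{h_\ell})=(3^{h_\ell}-1)/2$.

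For the sum, I would first reduce to a per-$L$ count. By Proposition \ref{exhaustive} every totally real octahedral $K/\Q$ unramified outside $\ell$ has a well-defined associated $L\in\mathscr{L}$, namely the fixed field of $V_4\trianglelefteq S_4$, so $n_\ell^{(S_4)}$ is the disjoint sum over $L\in\mathscr{L}$ of the number of such $S_4$-extensions lying above $L$. Reasoning exactly as in Theorem \ref{S4} and Proposition \ref{kA4}, each $S_4$-extension above $L$ is the fixed field of an $\F_2[\Z/3\Z]$-complement to a submodule $M\simeq U_2$ of $\Gal(L^{(2)}_{\{\ell\}}/L)$, and conversely every $U_2$-submodule produces exactly one; by Lemma \ref{counting} their number is $2^{k}-1$, where $k$ is the multiplicity of $U_2$ in the decomposition $\Gal(L^{(2)}_{\{\ell\}}/L)\simeq U_2^{\oplus k}\oplus\F_2^{\oplus m}$ (the trivial submodules are discarded, as $\operatorname{Hom}(U_2,\F_2)=0$). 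It then suffices to identify this multiplicity $k$ with the cohomological quantity $k_L$.

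The key step is this cohomological identification. Since $L/F$ is everywhere unramified, an extension is unramified outside $\ell$ over $L$ precisely when it is so over $F$; hence $L_{\{\ell\}}=F_{\{\ell\}}=:\Omega$, and $C:=\Gal(L/F)\simeq\Z/3\Z$ acts by conjugation on $\Gal(\Omega/L)$. As above, $H^1(\Gal(\Omega/L),\F_2)$ is the $\F_2$-dual of $M:=\Gal(L^{(2)}_{\{\ell\}}/L)\simeq U_2^{\oplus k}\oplus\F_2^{\oplus m}$, so $\dim_{\F_2}H^1(\Gal(\Omega/L),\F_2)=2k+m$. For the base field I would apply the inflation-restriction five-term sequence to $1\to\Gal(\Omega/L)\to\Gal(\Omega/F)\to C\to 1$: because $|C|=3$ is invertible in $\F_2$, the groups $H^i(C,\F_2)$ vanish for $i\geq 1$, so restriction yields an isomorphism $H^1(\Gal(\Omega/F),\F_2)\xrightarrow{\sim}H^1(\Gal(\Omega/L),\F_2)^C$. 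Using that $U_2$ has no nonzero $C$-invariants and is self-dual over $\F_2$ while $\F_2$ is trivial, the $C$-invariants of $M^\vee$ have dimension $m$; thus $\dim_{\F_2}H^1(\Gal(\Omega/F),\F_2)=m$. Subtracting gives $(2k+m)-m=2k$, so $k=k_L$, and summing $2^{k_L}-1$ over $L\in\mathscr{L}$ produces the stated formula.

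The hard part is precisely this last identification: recognizing that $L_{\{\ell\}}$ and $F_{\{\ell\}}$ coincide, so that the $C$-action is available at all, and then exploiting that $|C|$ is prime to $2$ to collapse inflation-restriction into the clean statement that $H^1$ over $F$ is exactly the $C$-invariant part of $H^1$ over $L$. The remaining $\F_2[\Z/3\Z]$-bookkeeping — that invariants detect only the trivial isotypic block $\F_2^{\oplus m}$, leaving the $U_2$-multiplicity as half the dimension defect $2k$ — is routine once the self-duality and vanishing of invariants of $U_2$ are recorded.
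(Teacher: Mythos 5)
Your proof is correct, and it follows the same skeleton as the paper's — counting $\mathscr{L}$ as $\mathbb{P}(\F_3^{h_\ell})$, decomposing $\Gal(L^{(2)}_{\{\ell\}}/L)\simeq U_2^{\oplus k}\oplus\F_2^{\oplus m}$ over $\F_2[\Z/3\Z]$, identifying the subtracted cohomological term with $m$, and counting $U_2$-constituents via Lemma \ref{counting} — but the key identification $k=k_L$ is carried out by a genuinely different mechanism. The paper argues by descent: a trivial summand corresponds to a quadratic extension $M/L$ on which $\Gal(L/F)$ acts trivially, which (the orders $2$ and $3$ being coprime) is equivalent to $M/L$ being realizable over $F$ in the sense of Definition \ref{desc}, and the descended quadratic extensions of $F$ are exactly those inside the ray class field of $F$ of conductor a large power of $\ell$; this yields $m=\dim_{\F_2}H^1(\Gal(F_{\{\ell\}}/F),\F_2)$. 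You instead prove the field equality $\Omega:=L_{\{\ell\}}=F_{\{\ell\}}$ (valid precisely because $L/F$ is unramified at all places, finite and infinite) and then collapse the inflation-restriction sequence for $1\to\Gal(\Omega/L)\to\Gal(\Omega/F)\to C\to 1$, $C=\Gal(L/F)$, using $H^i(C,\F_2)=0$ for $i\geq 1$, so that restriction gives $H^1(\Gal(F_{\{\ell\}}/F),\F_2)\simeq H^1(\Gal(L_{\{\ell\}}/L),\F_2)^C$, whose dimension is $m$ by self-duality of $U_2$ and vanishing of its $C$-invariants. Your route is cleaner at this step and slightly more complete: the isomorphism onto $C$-invariants delivers in one stroke both injectivity and surjectivity of base change from $2$-elementary extensions of $F$ unramified outside $\ell$ to the $C$-stable, trivially-acted-on ones of $L$, which the paper's realizability argument leaves implicit; the paper's version, in exchange, stays within the explicit field-theoretic language (ray class fields, Definition \ref{desc}) used throughout Section \ref{octahedralsection}, and you are also more explicit than the paper about the disjointness of the sum over $L\in\mathscr{L}$ (via the uniqueness of $V_4\trianglelefteq S_4$). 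One shared looseness, inherited from Proposition \ref{kA4}: the fields $K$ correspond to submodules $N\leq\Gal(L^{(2)}_{\{\ell\}}/L)$ whose \emph{quotient} is isomorphic to $U_2$ (i.e.\ to kernels), not to submodules isomorphic to $U_2$ — a given $U_2$-submodule has many complements when $k\geq 2$, so "every $U_2$-submodule produces exactly one $K$" is not literally a bijection as stated; by semisimplicity and self-duality the two counts nevertheless coincide at $2^k-1$, so the formula is unaffected.
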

\begin{proof} 
Note that $h_\ell$ is the 3-rank of the class group of $F=\Q(\sqrt{\ell})$. 
Let $\mathscr{L}$ be the set of unramified cubic extensions of $F$.
Lines in the $\F_3$-vector space $H^1(\Gal(F_\emptyset/F), \F_3)$ are in bijective correspondence with $\mathscr{L}$; hence
$$
\operatorname{card}\mathscr{L} = \operatorname{card} \mathbb{P}(\F_3^{h_{\ell}}) = \frac{3^{h_\ell}- 1}{2}.
$$ 
Fix $L \in \mathscr{L}$.
In the decomposition of
$\F_2[\Gal(L/F)]$-modules 
\begin{equation} \label{decomp}
\Gal(L^{(2)}_{\{\ell \}}/L) \simeq U_2^{\oplus k} \oplus \F_2^{\oplus m},
\quad 
\dim_{\F_2} \Gal(L^{(2)}_{\{\ell \}}/L) = 2k+m, 
\end{equation}
we would like to find expressions for the integers $k,m \geq 0$. We will see that $k$ depends on $L$, while $m$ only depends on $F$.

Let $M/L$ be any quadratic extension unramified outside $\ell$. 
The action of $\Gal(L/F)$ on $\Gal(M/L) \simeq \Z/2\Z$ is trivial
if and only if $M/L$ is realizable over $F$ (using Definition \ref{desc}). 
Now, $M/L$ is realizable over $F$ precisely when 
the descended field is contained in the ray class field over $F$  whose conductor is any sufficiently large power of $\ell$.
We conclude that $m$ in the decomposition \eqref{decomp} is given by  
\[
m = \dim_{\F_2} H^1(\Gal(F_{\{ \ell \} }/F),\F_2).
\]
Consequently, the integer $k$ in \eqref{1} is given as
$$
k = \frac{\dim_{\F_2} H^1(\Gal(L_{\{\ell \}}/L),\F_2) - \dim_{\F_2} H^1(\Gal(F_{\{ \ell \} }/F),\F_2)}{2}.
$$
There are 
$$
\operatorname{card} \mathbb{P}(\F_2^{k}) = 2^{k}-1  
$$
extensions $K$ of $L$ such that $\Gal(K/L)\simeq U_2$ as
$\F_2[\Gal(L/F)]$-modules. The formula for $n^{(S_4)}_\ell$ follows. 
\end{proof}

\begin{theorem} \label{power}
Let $\ell$ be a prime. 
 If $\pi$ is a Maass form of octahedral type on $\Gamma_1(\ell^k)$ for some integer $k \geq 1$, 
 then the associated projective representation $\tilde{\rho}$ has conductor $\ell$ or $\ell^2$. 
\end{theorem}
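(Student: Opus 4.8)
The plan is to bound the conductor exponent of $\tilde{\rho}$ at each prime and reduce everything to the single wild-inertia issue at $\ell$. Since $\pi$ has level $\Gamma_1(\ell^k)$, its attached representation $\rho$, and hence the projectivization $\tilde{\rho}$, is ramified only at $\ell$; therefore $m(p)=0$ for every prime $p\neq \ell$, and the conductor of $\tilde{\rho}$ equals $\ell^{m(\ell)}$. It thus suffices to prove $m(\ell)\leq 2$. By the lemma of Serre recalled in the introduction, if $\tilde{\rho}$ is \emph{tamely} ramified at $\ell$ then $m(\ell)=1$ or $m(\ell)=2$. So the entire problem reduces to showing that the ramification of $\tilde{\rho}$ at $\ell$ is tame.

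To carry this out I would first pin down which primes $\ell$ can occur. As $\pi$ is a Maass form, $\tilde{\rho}$ is even, so the field $K$ fixed by $\ker\tilde{\rho}$ is totally real; by hypothesis it is ramified only at $\ell$ with $\Gal(K/\Q)\simeq S_4$. Exactly as in the proof of Proposition \ref{exhaustive}, whose relevant step uses only that $K$ is totally real and ramified at the single prime $\ell$ and so applies for every $k\geq 1$, the unique quadratic subfield $F=K^{A_4}$ is itself totally real and ramified only at $\ell$. The only totally real quadratic field ramified at a single odd prime $\ell$ is $\Q(\sqrt{\ell})$, and it exists precisely when $\ell\equiv 1\bmod 4$. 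In particular there is no totally real quadratic field ramified only at $3$, and none ramified only at $\ell$ when $\ell\equiv 3\bmod 4$, so these cases do not occur.

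For the surviving odd primes one has $\ell\equiv 1\bmod 4$, hence $\ell\geq 5$ and $\ell\nmid 24=|S_4|$. The wild inertia subgroup $P_\ell\subseteq I_\ell$ is a pro-$\ell$ group, so its image $\tilde{\rho}(P_\ell)$ is a finite $\ell$-subgroup of $\tilde{\rho}(G_\Q)\simeq S_4$; since $\ell\nmid 24$ this image is trivial. Hence $\tilde{\rho}$ is tamely ramified at $\ell$, and the reduction above yields $m(\ell)\in\{1,2\}$, so the conductor of $\tilde{\rho}$ is $\ell$ or $\ell^2$, as claimed. Note that one needs only the basic statement ``tame $\Rightarrow m(p)\in\{1,2\}$'' and not Serre's refined divisibility criterion, so no hypothesis $e_\ell\geq 3$ is required.

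The main obstacle is the genuinely wild prime $\ell=2$: here $\Q(\sqrt{2})$ is totally real and ramified only at $2$, so the quadratic-subfield argument does not dispose of this case, and at $p=2$ the wild inertia can map nontrivially into the Sylow-$2$ subgroup of $S_4$, a priori pushing $m(2)$ above $2$. I would handle $\ell=2$ separately, the cleanest route being to rule out the existence of a totally real $S_4$-extension of $\Q$ ramified only at $2$: such a field would be a totally real degree-$24$ field whose discriminant is a power of $2$, and bounding the $2$-adic discriminant exponent via the conductor–discriminant formula against the Odlyzko-type root-discriminant lower bound for totally real fields of degree $24$ should show no such field exists, making the case $\ell=2$ vacuous. (Alternatively one could compute directly a $2$-adic local lifting of $\tilde{\rho}|_{D_2}$ of minimal conductor, but the global discriminant bound is more decisive.)
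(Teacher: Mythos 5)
Your argument is correct and, for odd $\ell$, it is essentially the paper's own proof: both deduce $\ell \equiv 1 \bmod 4$ from the fact that the quadratic subfield $F = K^{A_4}$ of the associated $S_4$-field $K$ is totally real and ramified only at $\ell$, conclude that the ramification at $\ell$ is tame, and then invoke the bound $m(\ell)\in\{1,2\}$ for tamely ramified projective representations. The differences are worth recording, since on two points you are more careful than the paper. First, the paper obtains the congruence by citing Theorem \ref{last}, which is stated only for $k\in\{1,2\}$, whereas Theorem \ref{power} allows all $k\geq 1$; you correctly bypass this by observing that the argument of Proposition \ref{exhaustive} uses only that $K$ is totally real and ramified at the single prime $\ell$. (The paper additionally splits into the cases $e(\ell,K/\Q)=4$ and $e(\ell,K/\Q)=2$, which buys the sharper conclusion that the conductor is exactly $\ell$ when $e=4$; as you note, this refinement is not needed for the statement.) Second, you are right that $\ell=2$ escapes the quadratic-subfield argument --- the step ``$F$ real and ramified only at $\ell$ forces $\ell\equiv 1 \bmod 4$'' fails because of $\Q(\sqrt{2})$ --- and the paper passes over this case in silence; since $2$ divides the order of $S_4$, tameness genuinely requires an argument there. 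Your Odlyzko-bound strategy should indeed close it: the $2$-adic different bound caps the root discriminant of a degree-$24$ field ramified only at $2$ with inertia inside $S_4$ at $2^{93/24}<15$, below the unconditional Odlyzko lower bound for totally real fields of degree $24$. But a much lighter argument is available: an $S_4$-extension $K/\Q$ ramified only at $2$ surjects onto $S_4/V_4\simeq S_3$, whose cubic subfield would be a cubic field ramified only at $2$, hence of absolute discriminant at most $2^3=8$ (the different exponent at $2$ in a cubic field is at most $3$); Minkowski's bound for cubic fields forces absolute discriminant greater than $12$, a contradiction. So the case $\ell=2$ is vacuous for every $S_4$-field, totally real or not, and with that supplied your proof is complete.
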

\begin{proof}
By Theorem \ref{last}, the prime $\ell$ is in the set $J$ defined in the statement of this theorem.
Let $K/\Q$ be a totally real $S_4$ extension ramified only at $\ell$, 
and let 
$$\tilde{\rho}: \Gal(\overline{\Q}/\Q) \to \operatorname{PSL}(2,\mathbb{C})$$ 
be the corresponding even projective representation.
The ramification index $e(\ell,K/\Q)$ of $\ell$ in $K$ is either 2 or 4. 
First, suppose $e(\ell,K/\Q)=4$. 
Since $4$ divides $\ell -1$, the conductor of $\tilde{\rho}$ is exactly $\ell$ \cite[p. 248]{Serre}.
Next, suppose $e(\ell,K/\Q)=2$.
Since $\tilde{\rho}$ is tamely ramified at $\ell$, the decomposition group $\tilde{\rho}(D_\ell)$ is either cyclic or dihedral, and the conductor 
$$
N(\tilde{\rho})= \ell^{m(\ell)}
$$
of $\tilde{\rho}$ has $m(\ell)=1$ if and only if the decomposition group $\tilde{\rho}(D_\ell)$ is cyclic.
\end{proof}

\bibliographystyle{amsplain} 
\bibliography{references.bib}


\end{document}